\makeatletter\@removefromreset{footnote}{chapter}\makeatother 
\numberwithin{equation}{section}
\newtheorem{theorem}{Theorem}
\newtheorem{proposition}{Proposition}
\newtheorem{lemma}{Lemma}
\newtheorem{remark}{Remark}
\newtheorem{definition}{Definition}
\def\i1{\mathds{1}}
\def\h1{\hspace{0.2cm}}
\def\v1{\vskip0.2cm}
\def\it{\textit}
\def\R{\mathbb{R}}
\begin{document}
\title{Wiener integrals with respect to the two-parameter tempered Hermite random fields}
\author{Atef Lechiheb}
\address{University of Sousse - Higher Institute of Transport and Logistics of Sousse\\
\indent Lab: Analyse, probabilités et fractals LR18ES17 - Faculty of Science of Monastir}
\email{atef.lechiheb@gmail.com}
\dedicatory{}
\date{Mars 2022}
\subjclass[2010]{Primary: 60G07, 60F17; Secondary: 60G23, 60G20, 60H05}
\keywords{Two-parameter tempered Hermite random field, Spectral representations, Stochastic integrals, Wiener-It\^{o} integrals}

\begin{abstract}
The two-parameter tempered Hermite field modifies the power law kernel in the moving average representation of the Hermite field by adding an exponential tempering. This paper develops the basic theory of two-parameter tempered Hermite field, including moving average, sample path properties, spectral representations and the theory of Wiener stochastic integration with respect to the two-parameter tempered Hermite field of order one.
\end{abstract}
\maketitle

\section{Introduction}
Let $\big(W(x,y),\,\,x,y\in\R\big)$ be a two-parameter Brownian field (see Definition \ref{BF} below). The two-parameter Hermite random fields or Hermite sheets of order $k\geq 1$ are stochastic processes defined as multiple Wiener-It\^{o} integrals of order $k$ with respect to $W$.
\begin{align}\label{H}
 Z^{k,H_1,H_2}(t,s):=& \int\limits_{(\mathbb{R}^2)^k}^\prime dW(x_1,y_1)\ldots dW(x_k,y_k) \\
\nonumber&\quad \int\limits_0^t\int\limits_0^s \prod_{j=1}^k (u-x_j)_+^{-(\frac{1}{2}+\frac{1-H_1}{k})} (v-y_j)_+^{-(\frac{1}{2}+\frac{1-H_2}{k})} \,du\,dv, \
\end{align}
where $s,\,t\in\mathbb{R}^2$ and $H=(H_1,H_2)\in\big(\frac{1}{2},1\big)^2$ (the prime $^\prime$ on the integral indicates that one does not integrate on the hyperplanes $(x_i,y_i)=(x_j,y_j)$, $i\neq j$). Hermite fields are self-similar processes with stationary increments (see \cite{HS3} for more details).

From expression \eqref{H}, it is possible to note that for one parameter case, we recover the Hermite process which represents a family that has been studied by several authors see, e.g., \cite{taqq}, \cite{Taqqu79} and \cite{Taqqu17}.

F. \emph{Sabzikar} \cite{Sab} has introduced a new class of one parameter stochastic processes, called tempered Hermite process. He has modified the kernel of the one-parameter Hermite process $Z^{k,\,H}$ multiplying it by an exponential tempering factor $\lambda>0$. This process has the following time domain representation
\begin{equation}\label{thp}
  Z^{k,\,H}_\lambda(t):= \int\limits_{\mathbb{R}^k}^{\prime} \int\limits_0^t \prod_{j=1}^k\Big((s-y_j)_+^{-(\frac{1}{2}+\frac{1-H}{k})} \,e^{-\lambda(s-y_j)_+}\Big)\,ds\,B(dy_1)\ldots B(dy_k),
\end{equation}
where $B=\{B(t),\, \, t\in\mathbb{R}\}$ is a real-valued standard Brownian motion, $H>\frac{1}{2}$ and $\lambda>0$. It has been shown that this process has stationary increments but it is not self-similar.

The natural question in the present work is how to extend this class of processes to the two-parameter case and how to build the process that will be called the ``two-parameter tempered Hermite field" defined either as a natural extension of the tempered Hermite process \eqref{thp} to two dimensional random fields or as a modification of the kernel of the Hermite field \eqref{H} multiplying it by an exponential factor $\lambda=(\lambda_1,\lambda_2)\in (0,\infty)^2$ such that this random field is well defined for Hurst parameter $H=(H_1,H_2)\in (\frac{1}{2},\infty)^2$.

The remainder of the paper is organized as follows. In Section \ref{sect2} we recall the concept of multiple Wiener-It\^{o} integrals with respect to Brownian field and we present its properties. In Section \ref{sect3}, we introduce the main objective of this study which is the so-called tempered Hermite field, and derive some of its basic properties. In section \ref{sect4}, we study the Hermitian random measures on $(\mathbb{R}^2,\mathcal{B}(\mathbb{R}^2))$ and we give the spectral representation of the two-parameter tempered Hermite field. In Section \ref{sect5} we prove some basic results on the two-parameter tempered
fractional calculus, which will be needed in the sequel. Finally, in Section \ref{sect6} we apply the results of Section \ref{sect5} to construct a suitable theory of stochastic integration for two-parameter tempered Hermite field of order one.

\section{Multiple Wiener-It\^{o} integrals with respect to the Brownian field}
\label{sect2}
In this section, we briefly review the theory of multiple Wiener-It\^{o} integrals with respect to the Brownian field. For more details, we refer the reader to \cite{Nua} and \cite{Pak}. Let us first recall the definition of the standard Brownian field.
\begin{definition}\label{BF}
The two-parameter standard Brownian field is the centered Gaussian process $\big\{\,W(x,y):\,\, x,\,y\in\R\, \big\}$ such that $W(0,0)=0$ and its covariance function is given by
\begin{equation*}
\mathbb{E}\big[W(s,t)W(u,v)\big] \,=\, (s\wedge u)(t\wedge v).
\end{equation*}
\end{definition}
We can now introduce the multiple Wiener-It\^{o} integrals with respect to the Brownian field. Let $f:(\mathbb{R}^2)^k\to\mathbb{R}$ be a deterministic function and let us denote by $I_k^{W}(f)$ the $k$-fold multiple Wiener-It\^{o} integrals of $f$ with respect to the standard two-sided Brownian field $(W(x,y))_{x,y\in\mathbb{R}}$. This integral has the following form:
\begin{align}\label{af}
 I_k^{W}(f) &= \int_{(\mathbb{R}^2)^k}^\prime f\left((x_1,y_1),(x_2,y_2),\ldots,(x_k,y_k)\right)\\
 &\quad\times dW(x_1,y_1)dW(x_2,y_2)\ldots dW(x_k,y_k).\nonumber
\end{align}
The actual definition first defines $I_k^{W}(f)$ for elementary functions in a natural way, and then extends $I_k^{W}(f)$ to $f\in L^2\big((\mathbb{R}^2)^k\big)$ so that the following properties hold:
\begin{itemize}
  \item $I_k^{W}$ is linear,
  \item $I_k^{W}(f)\,=\, I_k^{W}(\tilde{f})$, where $\tilde{f}$ is the symmetrization of $f$ defined by
\begin{equation*}
\tilde{f}((x_1,y_1),\ldots,(x_k,y_k))=\frac{1}{k!}\sum_{\sigma}f((x_{\sigma(1)},y_{\sigma(1)}),\dots(x_{\sigma(k)},y_{\sigma(k)})),
\end{equation*}
$\sigma$ running over all permutations of $\{1,\ldots,k\}$,
  \item multiple Wiener integrals satisfy the following isometry and orthogonality properties
$$\mathbb{E}\big[I_k^{W}(f)I_{k'}^{W}(g)\big]=\begin{cases}k!\langle\widetilde{f},\widetilde{g}\rangle_{L^2\big((\mathbb{R}^2)^k\big)}& \text{if $k=k'$}\\0& \text{if $k\neq k'$},\end{cases}$$
where $\langle\widetilde{f},\widetilde{g}\rangle_{L^2\big((\mathbb{R}^2)^k\big)}$ indicates the standard inner product in $L^2\big((\mathbb{R}^2)^k\big)$.
\end{itemize}
The prime $^\prime$ on the integral \eqref{af} indicates that one does not integrate on the hyperplanes $(x_i,y_i)=(x_j,y_j)$, $i\neq j$. This ensures that $\mathbb{E}\big[I_k^{W}(f)\big]=0$.

Next, we will present the generalized stochastic Fubini theorem with respect to the two-parameter standard Brownian field. This theorem states that one can interchange Lebesgue integrals and multiple Wiener-It\^{o} stochastic integrals with respect to the Brownian field.\\
Let $k\in\mathbb{N}$, the mixed Lebesgue space and its norm of a function $f:\R^2\times (\R^2)^k\to\R$ are:
\begin{equation*}
  \|f\|_{p_1,p_2}\,=\, \Big(\int_{\R^2}\left(\int_{\R^{2k}} \vert f(a,b, \mathbf{u}_1,\ldots \mathbf{u}_k)\vert ^{p_1}\,d\mathbf{u}_1\ldots d\mathbf{u}_k\right)^{\frac{p_2}{p_1}}\,da\,db\Big)^{\frac{1}{p_2}},
\end{equation*}
\begin{equation*}
  \mathcal{L}_{p_1,p_2}(\R^2\times \R^{2k})=\Big\{f:\R^2\times (\R^2)^k\to\R,\,\text{Borelian},\, \|f\|_{p_1,p_2}<\infty\Big\}.
\end{equation*}
Let us remark that if $f \in\mathcal{L}_{1,2}(\R^{2k}\times \R^2)$ using the Cauchy-Schwartz' inequality:
\begin{eqnarray*}
  \|f\|_{1,2}^2 &=& \Big(\int _{\R^{2k}}\Big(\int_{\R^2} \vert f(a,b, \mathbf{u}_1,\ldots \mathbf{u}_k)\vert\,dadb\Big)^2 \,d\mathbf{u}_1\ldots d\mathbf{u}_k\Big) \\
   &=& \int_{\R^2}\int_{\R^2}\int_{\R^{2k}}\vert f(a_1,b_1, \mathbf{u}_1,\ldots \mathbf{u}_k)\vert f(a_2,b_2, \mathbf{u}_1,\ldots \mathbf{u}_k)\vert \\
   &&  \qquad \qquad \,d\mathbf{u}_1\ldots d\mathbf{u}_k\,da_1db_1da_2db_2
   \\
   &\leq& \|f\|^2_{2,1},
\end{eqnarray*}
this yields the inclusion $\mathcal{L}_{2,1}(\R^2\times \R^{2k})\subset\mathcal{L}_{1,2}(\R^{2k}\times \R^2)$.
\begin{theorem}
Let $f\in\mathcal{L}_{2,1}(\R^2\times \R^{2k})$ and $(W(x,y),\,x,\,y\in\R)$ be a two-parameter standard Brownian field. Then almost surely:
\begin{align}
&\int_{\R^2}\Big(\int_{\R^{2k}}  f\big(a,b, (x_1,y_1),\ldots, (x_k,y_k)\big)\,dW(x_1,y_1)\ldots dW(x_k,y_k)\Big)\,da\,db\nonumber\\
&\quad =\, \int_{\R^{2k}}\Big(\int_{\R^2} f\big(a,b, (x_1,y_1),\ldots, (x_k,y_k)\big)\,da\,db\Big)dW(x_1,y_1)\ldots dW(x_k,y_k).
\end{align}
\end{theorem}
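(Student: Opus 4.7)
The plan is a standard density-and-isometry argument: show that each side is well-defined under the hypothesis $f \in \mathcal{L}_{2,1}$, verify the identity on a dense class of elementary functions where it reduces to linearity, and pass to the limit using the $L^{2}$-isometry of the multiple Wiener-It\^{o} integral together with Tonelli.

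\emph{Well-posedness of both sides.} For the right-hand side, set $g(\mathbf{u}_1,\ldots,\mathbf{u}_k) := \int_{\R^2} f(a,b,\mathbf{u}_1,\ldots,\mathbf{u}_k)\,da\,db$. The inclusion $\mathcal{L}_{2,1} \subset \mathcal{L}_{1,2}$ noted just before the theorem yields $\|g\|_{L^2(\R^{2k})} \leq \|f\|_{1,2} \leq \|f\|_{2,1} < \infty$, so $g \in L^2(\R^{2k})$ and $I_k^{W}(g)$ is a bona fide element of $L^{2}(\Omega)$. For the left-hand side, define $F(a,b) := I_k^{W}\bigl(f(a,b,\cdot)\bigr)$, well-defined for a.e.\ $(a,b)$ since $f(a,b,\cdot) \in L^2(\R^{2k})$; the isometry gives $\E|F(a,b)| \leq (k!)^{1/2}\,\|f(a,b,\cdot)\|_{L^2(\R^{2k})}$, so integrating in $(a,b)$ and applying Tonelli yields $\int_{\R^2}|F(a,b)|\,da\,db < \infty$ almost surely. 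Hence the pathwise Lebesgue integral on the left-hand side is a.s.\ finite.

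\emph{Identity on an elementary dense class.} For a tensor product $f(a,b,\mathbf{u}) = \phi(a,b)\,\psi(\mathbf{u})$ with $\phi \in L^{1}(\R^2)$ and $\psi$ an off-diagonal elementary function in $L^2(\R^{2k})$, both sides collapse to $\bigl(\int_{\R^2}\phi(a,b)\,da\,db\bigr)\,I_k^{W}(\psi)$ by the linearity of $I_k^{W}$ and its definition on elementary functions. Finite linear combinations of such tensor products are dense in $\mathcal{L}_{2,1}(\R^2\times\R^{2k})$ in the $\|\cdot\|_{2,1}$-norm, so I can choose $f_n \to f$ with the identity holding for every $f_n$.

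\emph{Passage to the limit.} On the right, $g_n \to g$ in $L^2(\R^{2k})$ by the same Minkowski-type bound, so $I_k^{W}(g_n) \to I_k^{W}(g)$ in $L^{2}(\Omega)$ by the isometry. On the left, with $F_n(a,b) := I_k^{W}\bigl(f_n(a,b,\cdot)\bigr)$, the isometry combined with Tonelli gives
$$\E\Bigl|\int_{\R^2} F_n(a,b)\,da\,db - \int_{\R^2}F(a,b)\,da\,db\Bigr| \leq \int_{\R^2}\E|F_n(a,b)-F(a,b)|\,da\,db \leq (k!)^{1/2}\,\|f_n - f\|_{2,1},$$
which tends to zero. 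Both sides therefore converge in $L^{1}(\Omega)$ to the corresponding objects, so the identity holds in $L^{1}$, and passing to an almost surely convergent subsequence upgrades this to the stated a.s.\ equality. The only genuine obstacle is matching the asymmetric norm $\mathcal{L}_{2,1}$ to the natural $L^2(\R^{2k})$ domain of $I_k^{W}$, which is exactly what the $\mathcal{L}_{2,1} \subset \mathcal{L}_{1,2}$ inclusion highlighted in the excerpt is designed to handle; once that control is in hand, the remaining steps are routine bookkeeping.
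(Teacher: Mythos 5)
Your proof is correct and follows essentially the same route as the paper: both sides define continuous linear maps from $\mathcal{L}_{2,1}(\R^2\times\R^{2k})$ into a space of integrable random variables, they agree on a dense class of elementary functions, and the identity extends by density, with the inclusion $\mathcal{L}_{2,1}\subset\mathcal{L}_{1,2}$ controlling the right-hand side exactly as in the paper. The only cosmetic difference is that you run the left-hand map through $L^1(\Omega)$ via Tonelli rather than $L^2(\Omega)$, which is a harmless (and arguably cleaner) variant of the same argument.
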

\begin{proof}
The proof of this theorem is similar to that of \cite[Theorem 2.1]{taqq} where the function is defined on $\R^2\times(\mathbb{R}^2)^k$.\\
The map $$Y_1:f\mapsto \int_{\R^2}\Big(\int_{\R^{2k}}  f\big(a,b, (x_1,y_1),\ldots, (x_k,y_k)\big)\,dW(x_1,y_1)\ldots dW(x_k,y_k)\Big)\,da\,db$$ is a continuous linear map on the step functions in $\mathcal{L}_{2,1}(\R^2\times \R^{2k})$ taking its values in $L^2(\Omega)$. The set of these step functions is dense in $\mathcal{L}_{2,1}(\R^2\times \R^{2k})$  so this map admits a unique continuous linear extension on $\mathcal{L}_{2,1}(\R^2\times \R^{2k})$. \\
Let the map $$Y_2: f\mapsto \int_{\R^{2k}}\Big(\int_{\R^2} f\big(a,b, (x_1,y_1),\ldots, (x_k,y_k)\big)\,da\,db\Big)dW(x_1,y_1)\ldots dW(x_k,y_k).$$
It is a linear continuous map on $\mathcal{L}_{2,1}(\R^2\times \R^{2k})\subset\mathcal{L}_{1,2}(\R^{2k}\times \R^2)$ with a norm 1 from $\mathcal{L}_{1,2}(\R^{2k}\times \R^2)$ to $L^2(\Omega)$ so `a fortiori' on $\mathcal{L}_{2,1}(\R^2\times \R^{2k})$:
{\small\begin{eqnarray*}
\displaystyle\|Y_2\|_2^2&=& \int_{\R^{2k}}\Big(\int_{\R^2} f\big(a,b, (x_1,y_1),\ldots, (x_k,y_k)\big)\,da\,db\Big)^2dW(x_1,y_1)\ldots dW(x_k,y_k)\\
&=&\|f\|^2_{1,2}\\
&\leq& \|f\|^2_{2,1}.
\end{eqnarray*}}
Finally, the maps $Y_i$, $i = 1,\, 2$, are well defined and coincide on the step functions.
\end{proof}
\section{Two-parameter tempered Hermite field}
\label{sect3}
Now, we are going to introduce the main object of this paper: the two-parameter tempered Hermite random field or tempered Hermite sheet. We give its definition and derive its basic properties. We give by the following lemma which states that our process in Definition \ref{defni} below is well defined.
\begin{lemma}\label{lemmma}Let $k\in\mathbb{N}^\ast$, $H_1,H_2>1/2$  and $\lambda_1,\lambda_2>0$. The function
\begin{align}
&\qquad h_{s,t}^{H_1,H_2,\lambda_1,\lambda_2}((x_1,y_1),\dots,(x_k,y_k))\nonumber\\
&=\, \int_0^t\int_0^s\prod_{j=1}^k (a-x_j)_+^{-(\frac{1}{2}+\frac{1-H_1}{k})}e^{-\lambda_1(a-x_j)_+}(b-y_j)_+^{-(\frac{1}{2}+\frac{1-H_2}{k})}e^{-\lambda_2(b-y_j)_+} \,da\,db
\end{align}
is well defined in $L^2\big((\mathbb{R}^2)^ k\big)$.
\end{lemma}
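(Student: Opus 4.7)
The strategy is to compute $\|h_{s,t}^{H_1,H_2,\lambda_1,\lambda_2}\|_{L^2((\R^2)^k)}^2$ directly. Expanding the square and applying Tonelli's theorem (the integrand is nonnegative) interchanges the Lebesgue integrations over $[0,t]\times[0,s]$ with the $L^2$ integration over $(\R^2)^k$. Because the kernel is a product over $j=1,\dots,k$ and each factor further splits into an $x_j$-part and a $y_j$-part, setting $\alpha=\frac{1}{2}+\frac{1-H_1}{k}$ and $\beta=\frac{1}{2}+\frac{1-H_2}{k}$ reduces the squared norm to
\[
\|h_{s,t}\|_{L^2}^2 \;=\; \int_{[0,t]^2\times[0,s]^2} I_1(a,a')^k\,I_2(b,b')^k\,da\,da'\,db\,db',
\]
where $I_1(a,a')=\int_{\R}(a-x)_+^{-\alpha}(a'-x)_+^{-\alpha}e^{-\lambda_1[(a-x)_++(a'-x)_+]}\,dx$ and $I_2$ is defined analogously in $(b,b')$ with $\beta,\lambda_2$.

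Next I would estimate $I_1(a,a')$ as a function of $\tau:=|a-a'|$. Assuming $a\leq a'$ and substituting $u=a-x$ yields
\[
I_1(a,a') \;=\; e^{-\lambda_1\tau}\int_0^\infty u^{-\alpha}(u+\tau)^{-\alpha}e^{-2\lambda_1 u}\,du.
\]
The key analytic step is to split this $u$-integral at $\tau$: on $(0,\tau)$ one bounds $(u+\tau)^{-\alpha}\leq\tau^{-\alpha}$ (legitimate since $\alpha<1$, a direct consequence of $H_1>\frac{1}{2}$ and $k\geq 1$), while on $(\tau,\infty)$ one bounds $(u+\tau)^{-\alpha}\leq u^{-\alpha}$ and uses the exponential factor to control the tail. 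Depending on whether $2\alpha$ is smaller than, equal to, or greater than $1$, this yields a unified bound of the form
\[
I_1(a,a') \;\leq\; C_{\alpha,\lambda_1}\bigl(1+\tau^{1-2\alpha}+|\ln\tau|\bigr), \qquad \tau\in(0,t].
\]
Raising to the $k$-th power and exploiting the algebraic identity $k(1-2\alpha)=2H_1-2$, the dominating contribution to $I_1(a,a')^k$ is $|a-a'|^{2H_1-2}$, which is integrable on $[0,t]^2$ precisely when $2H_1-2>-1$, i.e., when $H_1>\frac{1}{2}$ (the logarithmic and constant terms yield strictly better integrability). An identical argument applies to $I_2(b,b')^k$ on $[0,s]^2$ thanks to $H_2>\frac{1}{2}$. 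Substituting these bounds back into the displayed formula for $\|h_{s,t}\|_{L^2}^2$ and using that the $(a,a')$ and $(b,b')$ integrals have factored yields the desired finiteness.

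The main obstacle I expect is the case analysis required for the pointwise bound on $I_1(a,a')$: the exponent $2\alpha$ may be below, equal to, or above $1$ depending on whether $H_1>1$, $H_1=1$, or $H_1<1$, and the borderline case $2\alpha=1$ produces a logarithmic singularity which must be absorbed into a slightly larger power of $\tau$ when raising to the $k$-th power. It is also worth emphasizing that the exponential tempering is essential here: without it, the single-variable integral $I_1(a,a')$ would fail to converge when $H_1\geq 1$, whereas the lemma handles any $H_1>\frac{1}{2}$.
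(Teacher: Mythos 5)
Your proposal is correct and shares its skeleton with the paper's proof: both expand the squared $L^2$ norm, apply Tonelli, and exploit the product structure of the kernel to reduce everything to the one-dimensional integral $\int_0^\infty u^{-\alpha}(u+\tau)^{-\alpha}e^{-2\lambda_1 u}\,du$ in the difference variable $\tau=|a-a'|$ (and its analogue in $|b-b'|$). Where you genuinely diverge is in the treatment of this integral: the paper evaluates it in closed form via the identity $\int_0^\infty x^{\nu-1}(x+\beta)^{\nu-1}e^{-\mu x}\,dx=\pi^{-1/2}(\beta/\mu)^{\nu-\frac{1}{2}}e^{\beta\mu/2}\Gamma(\nu)K_{\frac{1}{2}-\nu}(\beta\mu/2)$ and then invokes the small-argument asymptotics of the modified Bessel function $K_\nu$, treating the cases $\frac{1}{2}<H_1<1$ and $H_1>1$ separately; you instead bound the integral elementarily by splitting at $u=\tau$, arriving at the same dominant behaviour $\tau^{2H_1-2}$ through the identity $k(1-2\alpha)=2H_1-2$, with no special-function machinery. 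Your route is more self-contained, and you explicitly flag the borderline case $2\alpha=1$ (i.e.\ $H_1=1$), which the paper's case split silently omits (there $K_0$ has a logarithmic singularity, absorbed exactly as you describe); the paper's route, in exchange, produces exact constants that it reuses later in the covariance computation. One detail to tidy when you write this out: the bound $(u+\tau)^{-\alpha}\le\tau^{-\alpha}$ on $(0,\tau)$ presupposes $\alpha\ge 0$, which fails when $H_1\ge 1+\frac{k}{2}$; in that regime the integrand has no singularity at $u=0$ and $I_1$ is bounded by a polynomial in $\tau$ on compacts, so the conclusion stands, but the case deserves a sentence.
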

\begin{proof}
The proof is similar to that of \cite[Theorem 3.5]{Bai-Taqqu} and \cite[Lemma 1]{Sab}. \\
To show that $h_{s,t}^{H_1,H_2,\lambda_1,\lambda_2}((x_1,y_1),\dots,(x_k,y_k))$ is square integrable over $(\R^2)^k$, we write
{\small\begin{align*}
&\qquad \int_{(\R^2)^k}h_{s,t}^{H_1,H_2,\lambda_1,\lambda_2}((x_1,y_1),\dots,(x_k,y_k))^2\,dx_1dy_1\ldots dx_kdy_k\\
&=\int_{(\R^2)^k}\Bigg[ \int_0^t\int_0^s\int_0^t\int_0^s\prod_{j=1}^k (a_1-x_j)_+^{-(\frac{1}{2}+\frac{1-H_1}{k})}e^{-\lambda_1(a_1-x_j)_+}(b_1-y_j)_+^{-(\frac{1}{2}+\frac{1-H_2}{k})}\\
&\quad \times e^{-\lambda_2(b_1-y_j)_+} (a_2-x_j)_+^{-(\frac{1}{2}+\frac{1-H_1}{k})}e^{-\lambda_1(a_2-x_j)_+}(b_2-y_j)_+^{-(\frac{1}{2}+\frac{1-H_2}{k})}e^{-\lambda_2(b_2-y_j)_+} \\
&\qquad \qquad \qquad da_1db_1da_2db_2\Bigg] dx_1dy_1\ldots dx_kdy_k\\
&= 2^2 \int_0^t\,da_1\int_{a_1}^tda_2\int_0^s\,db_1\int_{b_1}^sdb_2\Bigg[\int_{(\R^2)^k} \prod_{j=1}^k (a_1-x_j)_+^{-(\frac{1}{2}+\frac{1-H_1}{k})}e^{-\lambda_1(a_1-x_j)_+} \\
&\quad\times (b_1-y_j)_+^{-(\frac{1}{2}+\frac{1-H_2}{k})}e^{-\lambda_2(b_1-y_j)_+} (a_2-x_j)_+^{-(\frac{1}{2}+\frac{1-H_1}{k})}e^{-\lambda_1(a_2-x_j)_+}\\
&\quad \times(b_2-y_j)_+^{-(\frac{1}{2}+\frac{1-H_2}{k})}e^{-\lambda_2(b_2-y_j)_+} dx_1dy_1\ldots dx_kdy_k\Bigg]\\
&= 2^2 \int_0^t\,du_1\int_0^{t-u_1}du_2\int_0^s\,dv_1\int_0^{s-v_1}dv_2\Bigg[\int_{(\R^2)^k}\prod_{j=1}^k (\xi_j)_+^{-(\frac{1}{2}+\frac{1-H_1}{k})}\\
&\quad \times e^{-\lambda_1(\xi_j)_+} (\omega_j)_+^{-(\frac{1}{2}+\frac{1-H_2}{k})}e^{-\lambda_2(\omega_j)_+} (\xi_j+u_2)_+^{-(\frac{1}{2}+\frac{1-H_1}{k})}\\
&\quad\times e^{-\lambda_1(\xi_j+u_2)_+}(\omega_j+v_2)_+^{-(\frac{1}{2}+\frac{1-H_2}{k})}e^{-\lambda_2(\omega_j+v_2)_+} d\xi_1d\omega_1\ldots d\xi_kd\omega_k\Bigg]\\
&\qquad(u_1=a_1,\,\,u_2=a_2-a_1,\,\, v_1=b_1,\,\, v_2=b_2-b_1,\,\,\xi_j= a_1-x_j,\,\,\omega_j=b_1-y_j).
\end{align*}}

Then, {\small
\begin{align*}
&\qquad \int_{(\R^2)^k}h_{s,t}^{H_1,H_2,\lambda_1,\lambda_2}((x_1,y_1),\dots,(x_k,y_k))^2\,dx_1dy_1\ldots dx_kdy_k\\
&=2^2 \int_0^t\,du_1\int_0^{t-u_1}e^{-k\lambda_1u_2}du_2 \Big[\int_{\R^+} \xi^{-(\frac{1}{2}+\frac{1-H_1}{k})} (\xi+u_2)_+^{-(\frac{1}{2}+\frac{1-H_1}{k})} e^{-2\lambda_1 \xi}\,d\xi\Big]^k\\
&\quad \times \int_0^s\,dv_1\int_0^{s-v_1}e^{-k\lambda_2 v_2}dv_2 \Big[\int_{\R^+} \omega^{-(\frac{1}{2}+\frac{1-H_2}{k})} (\omega+v_2)_+^{-(\frac{1}{2}+\frac{1-H_2}{k})} e^{-2\lambda_2 \omega}\,d\omega\Big]^k\\
&=2^2 \int_0^t\,du_1\int_0^{t-u_1}e^{-k\lambda_1u_2}u_2^{2H_1-2}du_2 \Big[\int_{\R^+} x^{-(\frac{1}{2}+\frac{1-H_1}{k})} (x+u_2)_+^{-(\frac{1}{2}+\frac{1-H_1}{k})} e^{-2\lambda_1 x u_2}\,dx\Big]^k\\
&\quad \times \int_0^s\,dv_1\int_0^{s-v_1}e^{-k\lambda_2 v_2}v_2^{2H_2-2}dv_2 \Big[\int_{\R^+} y^{-(\frac{1}{2}+\frac{1-H_2}{k})} (y+v_2)_+^{-(\frac{1}{2}+\frac{1-H_2}{k})} e^{-2\lambda_2 y v_2}\,dy\Big]^k\\
&=2^2 \int_0^t\,du_1\int_0^{t-u_1}e^{-k\lambda_1u_2}u_2^{2H_1-2}du_2 \Big[\frac{\Gamma(\frac{1}{2}-\frac{1-H_1}{k})}{\sqrt{\pi}} \Big(\frac{1}{2\lambda_1u_2}\Big)^{\frac{H_1-1}{k}} e^{\lambda_1u_2}K_{\frac{1-H}{k}}(\lambda_1u_2)\Big]^k\\
&\quad \times \int_0^s\,dv_1\int_0^{s-v_1}e^{-k\lambda_2 v_2}v_2^{2H_2-2}dv_2 \Big[\frac{\Gamma(\frac{1}{2}-\frac{1-H_2}{k})}{\sqrt{\pi}} \Big(\frac{1}{2\lambda_1v_2}\Big)^{\frac{H_2-1}{k}} e^{\lambda_2v_2}K_{\frac{1-H}{k}}(\lambda_2v_2)\Big]^k\\
&= 2^2 \Big[\frac{\Gamma(\frac{1}{2}-\frac{1-H_1}{k})}{\sqrt{\pi} (2\lambda_1)^{\frac{H_1-1}{k}}}\Big]^k  \Big[\frac{\Gamma(\frac{1}{2}-\frac{1-H_2}{k})}{\sqrt{\pi} (2\lambda_2)^{\frac{H_2-1}{k}}}\Big]^k \int_0^t\,du_1\int_0^{t-u_1} \Big[u_2^{\frac{H_1-1}{k}}K_{\frac{1-H_1}{k}}(\lambda_1 u_2)\Big]^k\,du_2\\
&\quad \times \int_0^s\,dv_1\int_0^{s-v_1} \Big[v_2^{\frac{H_2-1}{k}}K_{\frac{1-H_2}{k}}(\lambda_2 v_2)\Big]^k\,dv_2\\
&= 2^2 \Big[\frac{\Gamma(\frac{1}{2}-\frac{1-H_1}{k})}{\sqrt{\pi} 2^{\frac{H_1-1}{k}}(\lambda_1)^{2\frac{H_1-1}{k}}}\Big]^k  \Big[\frac{\Gamma(\frac{1}{2}-\frac{1-H_2}{k})}{\sqrt{\pi} 2^{\frac{H_2-1}{k}}(\lambda_2)^{2\frac{H_2-1}{k}}}\Big]^k\\
&\quad \times \int_0^t\,du_1\int_0^{\lambda_1(t-u_1)} \Big[z_1^{\frac{H_1-1}{k}}K_{\frac{1-H_1}{k}}(z_1)\Big]^k\,dz_1\\ &\quad \times \int_0^s\,dv_1\int_0^{\lambda_2(s-v_1)} \Big[z_2^{\frac{H_2-1}{k}}K_{\frac{1-H_2}{k}}(z_2)\Big]^k\,dz_2
\end{align*}}
where we have applied the following integral formula
\begin{equation*}
\int_0^\infty x^{\nu-1}(x+\beta)^{\nu-1}e^{-\mu x}\,dx\,=\,\frac{1}{\sqrt{\pi}}\Bigg(\frac{\beta}{\mu}\Bigg)^{\nu-\frac{1}{2}}e^{\frac{\beta\mu}{2}}\Gamma(\nu)K_{\frac{1}{2}-\nu}\Bigg(\frac{\beta\mu}{2}\Bigg)
\end{equation*}
for $\vert \arg \beta\vert<\pi$, $\text{Re}\,\mu>0$, $\text{Re}\,\nu>0$. Here $K_\nu(x)$ is the modified Bessel function of the second kind (see, e.g., \cite[Section 9.6]{kind} or \cite[Section 11.5]{kind1}).\\
To finish the proof of our lemma, it suffices to show that
\begin{equation*}
\int_0^t\,du_1\int_0^{\lambda_1(t-u_1)} \Big[z_1^{\frac{H_1-1}{k}}K_{\frac{1-H_1}{k}}(z_1)\Big]^k\,dz_1
\end{equation*}
and
\begin{equation*}
\int_0^s\,dv_1\int_0^{s-v_1} \Big[v_2^{\frac{H_2-1}{k}}K_{\frac{1-H_2}{k}}(\lambda_2 v_2)\Big]^k\,dv_2
\end{equation*}
are finite for every $\lambda_1,\,\lambda_2>0$ and $H_1,\,H_2>\frac{1}{2}$.\\
First, assume $\frac{1}{2} < H_1,\,H_2 < 1$. In that case, $K_{\frac{1-H_1}{k}}(z_1)\sim z_1^{\frac{H_1-1}{k}}$ as $z_1\to0$ and $K_{\frac{1-H_2}{k}}(z_2)\sim z_2^{\frac{H_2-1}{k}}$ as $z_2\to0$ (see \cite[Chapter 9]{kind}), and hence the integrands $\Big[z_1^{\frac{H_1-1}{k}}K_{\frac{1-H_1}{k}}(z_1)\Big]^k \sim z_1^{2H_1-2}$ as $z_1\to0$ and $\Big[z_2^{\frac{H_2-1}{k}}K_{\frac{1-H_2}{k}}(z_2)\Big]^k \sim z_2^{2H_2-2}$ as $z_2\to0$, which are integrable provided that $H_1,\,H_2> \frac{1}{2}$ .\\
Now, let $H_1,H_2 > 1$. In the latter case, $K_{\frac{1-H_1}{k}}(z_1)\sim z_1^{\frac{1-H_1}{k}}$ as $z_1\to0$ and $K_{\frac{1-H_2}{k}}(z_2)\sim z_2^{\frac{1-H_2}{k}}$ as $z_2\to0$ and therefore the integrands $\Big[z_1^{\frac{H_1-1}{k}}K_{\frac{1-H_1}{k}}(z_1)\Big]^k \sim C_1$ as $z_1\to0$ and $\Big[z_2^{\frac{H_2-1}{k}}K_{\frac{1-H_2}{k}}(z_2)\Big]^k \sim C_2$ as $z_2\to0$, $C_1$ and $C_2$ are constants, which are integrable and this completes the proof.
\end{proof}
 Based on Lemma \ref{lemmma} and the expression \eqref{H} which describes the two-parameter Hermite field and the expression \eqref{thp} of the tempered Hermite process, we can introduce the following definition:
\begin{definition}\label{defni}
Let $k\in\mathbb{N}^\ast$, $H=(H_1,H_2)\in(1/2,\infty)^2$  and $\lambda=(\lambda_1,\lambda_2)\in(0,\infty)^2$. The random field
\begin{align}
\label{tHs}& Z^{k,\,H_1,H_2}_{\lambda_1,\lambda_2}(s,t)=\int_{(\mathbb{R}^2)^ k}^\prime\, dW(x_1, y_1)\ldots dW(x_k, y_k)\\
&\hspace{-.2em}\times \Bigg(\int_0^t \,da \int_0^s\,db \prod_{j=1}^k (a-x_j)_+^{-(\frac{1}{2}+\frac{1-H_1}{k})}e^{-\lambda_1(a-x_j)_+}(b-y_j)_+^{-(\frac{1}{2}+\frac{1-H_2}{k})}e^{-\lambda_2(b-y_j)_+}\Bigg),\nonumber
\end{align}
where $x_+=xI(x>0)$ and $W$ is a standard two-sided two-parameter Brownian field, is called a two-parameter tempered
Hermite field of order $k$. The prime $^\prime$ on the integral  indicates that one does not integrate on the hyperplanes $(x_i,y_i)=(x_j,y_j)$, $i \neq j$.
\end{definition}
The above integral \eqref{tHs} represents a multiple Wiener-It\^{o} integrals of order $k$ with respect to the standard two-sided two-parameter Brownian sheet $W$. For $k=1$, we call \eqref{tHs} a two-parameter tempered fractional Brownian sheet with Hurst multi-index $H=(H_1,H_2)$, for $k\geq 2$ the random field $Z^{k,\,H_1,H_2}_{\lambda_1,\lambda_2}(s,t)$ is not Gaussian and for $k=2$ we denominate it the two-parameter tempered Rosenblatt field.
Note that, when $\lambda_1=\lambda_2=0$  and the Hurst index $H$ satisfies $\frac{1}{2}<H_1,H_2<1$, then the integral \eqref{tHs} is simply a two-parameter Hermite field of order $k$, given in \eqref{H}, which is first introduced as a limit of some weighted Hermite variations of the fractional Brownian field (see \cite{HS1,HS2}) and then in \cite{HS3} this process has been defined as a multiple integral with respect to the standard Brownian field.


Next, we will prove the basic properties of the two-parameter tempered Hermite field: self-similarity, stationarity
of the increments, H\"{o}lder continuity and then compute the covariance of this processes.

Let us first recall the concepts of self-similarity and stationarity of increments for two-parameter stochastic process.
\begin{definition}\cite[Appendix A.2]{book-Tudor}
A two-parameter stochastic process {\small$(X(s,t))_{s,t \in T}$}, $T\subset\mathbb{R}^2$,
\begin{enumerate}
\item  is called self-similar with the self-similarity order $(\alpha,\beta)$ if for any $h,\,k>0$ the process
$$\widehat{X}(s,t):= h^\alpha k^\beta X\big(\frac{s}{h},\frac{t}{k}\big), \quad (s,t)\in T$$
has the same finite-dimensional distributions as the process $X$.
\item is said to be stationary if for
every integer $n \geq1$ and $(s_i, t_j )\in T$, $i,j=1,\ldots,n$, the distribution of the random
vector
$$\Big(X(s+s_1,t+t_1),X(s+s_2,t+t_2),\ldots ,X(s+s_n,t+t_n)\Big)$$
does not depend on $(s,t)$, where $s,t\geq0$, $(s +s_i, t + t_i )\in T$, $i=1,\ldots,n$.
\item has stationary increments if for every $h, k > 0$ the process
$$\Big(X(t+h,s+k)-X(t,s+k)-X(t+h,s) + X(t,s)\Big)_{(s,t)\in\R^2}$$
is stationary.
\end{enumerate}
\end{definition}
The following results show that the two-parameter tempered Hermite field has stationary increments but is not a self-similar process.
\begin{proposition}\label{sta}Le $k\in\mathbb{N}^\ast,\,H_1,\, H_2>\frac{1}{2}$ and $\lambda_1,\,\lambda_2>0$. The process $Z^{k,\,H_1,H_2}_{\lambda_1,\lambda_2}$ given by \eqref{tHs} has stationary increments such that
\begin{equation*}
\Big\{Z^{k,\,H_1,H_2}_{\lambda_1,\lambda_2}(h_1t,h_2s)\Big\}_{s,t\in\R} \,\overset{(d)}{=}\, \Big\{h_1^{H_1}h_2^{H_2}Z^{k,\,H_1,H_2}_{h_1\lambda_1,h_2\lambda_2}(s,t)\Big\}_{s,t\in\R}
\end{equation*}
for any scales factor $h_1,\,h_2 > 0$. Thus, the two-parameter tempered Hermite field is not self-similar. Here, the symbol $\overset{(d)}{=}$ indicates the equivalence of finite-dimensional distributions.
\end{proposition}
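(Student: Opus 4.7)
The plan is to work directly at the level of the multiple Wiener--It\^{o} kernel and exploit the self-similarity and translation invariance of the two-parameter Brownian sheet $W$. Since, by Lemma \ref{lemmma}, we may write $Z^{k,H_1,H_2}_{\lambda_1,\lambda_2}(s,t)=I_k^W(h_{s,t}^{H_1,H_2,\lambda_1,\lambda_2})$, both claims (scaling and stationarity of increments) reduce to showing that the relevant symmetric kernels are equal up to an explicit deterministic factor, after which the isometry/orthogonality of the multiple Wiener--It\^{o} integral transfers these identities to equality of all finite-dimensional distributions.

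For the scaling relation, I would fix $h_1,h_2>0$, start from $h_{h_1 t,h_2 s}^{H_1,H_2,\lambda_1,\lambda_2}$ and perform the change of variables $a=h_1 a'$, $b=h_2 b'$, $x_j=h_1 x_j'$, $y_j=h_2 y_j'$ in the Lebesgue integral defining the kernel. Each factor $(a-x_j)_+^{-(1/2+(1-H_1)/k)}$ produces $h_1^{-(1/2+(1-H_1)/k)}$, each exponential becomes $e^{-h_1\lambda_1(a'-x_j')_+}$ (and similarly for the $y$-coordinates), while the outer Lebesgue measure contributes a factor $h_1 h_2$. Collecting the $k$ products and the outer factors yields, for the deterministic kernel, the identity
\begin{equation*}
h_{h_1 t,h_2 s}^{H_1,H_2,\lambda_1,\lambda_2}\bigl((h_1 x_1',h_2 y_1'),\dots,(h_1 x_k',h_2 y_k')\bigr)
= h_1^{H_1-k/2}\,h_2^{H_2-k/2}\, h_{s,t}^{H_1,H_2,h_1\lambda_1,h_2\lambda_2}\bigl((x_1',y_1'),\dots,(x_k',y_k')\bigr).
\end{equation*}
Next I would invoke the scaling property of the two-parameter Brownian sheet, $\bigl(W(h_1 x,h_2 y)\bigr)_{x,y\in\R}\eqd (h_1 h_2)^{1/2}\bigl(W(x,y)\bigr)_{x,y\in\R}$, which in the $k$-fold multiple integral produces the additional factor $(h_1 h_2)^{k/2}$. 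Multiplying the two factors gives exactly $h_1^{H_1}h_2^{H_2}$, proving the stated distributional identity.

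For stationarity of the increments, I would write the rectangular increment $Z(t+h,s+k)-Z(t,s+k)-Z(t+h,s)+Z(t,s)$ as the multiple integral of the kernel $\int_s^{s+k}\!\int_t^{t+h}\prod_j(a-x_j)_+^{\cdots}e^{\cdots}(b-y_j)_+^{\cdots}e^{\cdots}\,da\,db$, and then apply the translation $a\mapsto a+t$, $b\mapsto b+s$, $x_j\mapsto x_j+t$, $y_j\mapsto y_j+s$ to bring this kernel into the form $h_{h,k}^{H_1,H_2,\lambda_1,\lambda_2}$. Combined with translation invariance of the sheet, $\bigl(W(x+t,y+s)-W(t,y+s)-W(x+t,s)+W(t,s)\bigr)_{x,y}\eqd\bigl(W(x,y)\bigr)_{x,y}$, this proves that the law of the rectangular increment depends only on $(h,k)$, which is the definition given in Definitions 3.

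Finally, non-self-similarity is immediate: the scaling relation yields a process whose tempering parameters are rescaled to $(h_1\lambda_1,h_2\lambda_2)$, and the covariance structure computed in the sequel (or already visible from the factor $K_\nu(\lambda u)$ in Lemma \ref{lemmma}) depends non-trivially on $\lambda$, so $Z^{k,H_1,H_2}_{h_1\lambda_1,h_2\lambda_2}$ is not equal in distribution to $Z^{k,H_1,H_2}_{\lambda_1,\lambda_2}$ for generic $(h_1,h_2)$. The only mild obstacle is bookkeeping of the exponents $-1/2-(1-H_i)/k$ together with the Lebesgue and multiple-integral Jacobians; everything else is a direct transcription of the analogous one-parameter arguments in \cite{Sab} to the two-parameter sheet.
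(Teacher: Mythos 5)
Your proposal is correct and follows essentially the same route as the paper: a change of variables in the kernel's Lebesgue integral combined with the scaling (resp. translation) invariance of the two-parameter Brownian sheet, with the exponent bookkeeping $h_1^{H_1-k/2}h_2^{H_2-k/2}\cdot(h_1h_2)^{k/2}=h_1^{H_1}h_2^{H_2}$ coming out exactly as in the paper's computation. Your version is in fact slightly cleaner in that it isolates the deterministic kernel identity before invoking the Wiener--It\^{o} isometry, and it states the increment-stationarity conclusion correctly (the paper's displayed increment identity contains a typo, equating the rectangular increment to $Z(t,s)$ rather than to the integral over the increment rectangle).
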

\begin{proof}
For every $h_1,\,h_2 > 0$, we have
\begin{align}
&\quad Z^{k,\,H_1,H_2}_{\lambda_1,\lambda_2}(h_1t,h_2s)=\int_{(\mathbb{R}^2)^ k}^\prime\, dW(x_1, y_1)\ldots dW(x_k, y_k) \Bigg(\int_0^{h_1t} \,da \int_0^{h_2s}\,db\nonumber\\
&\quad \times \prod_{j=1}^k (a-x_j)_+^{-(\frac{1}{2}+\frac{1-H_1}{k})}e^{-\lambda_1(a-x_j)_+}(b-y_j)_+^{-(\frac{1}{2}+\frac{1-H_2}{k})}e^{-\lambda_2(b-y_j)_+}\Bigg)\nonumber\\
& =h_1h_2\int_{(\mathbb{R}^2)^ k}^\prime\, dW(h_1x_1, h_2y_1)\ldots dW(h_1x_k, h_2y_k) \Bigg(\int_0^{t} \,da \int_0^{s}\,db \nonumber\\
&\quad\times \prod_{j=1}^k \big(ah_1-h_1x_j\big)_+^{-(\frac{1}{2}+\frac{1-H_1}{k})}\nonumber\\
&\quad\times e^{-\lambda_1(ah_1-h_1x_j)_+}\big(bh_2-h_2y_j\big)_+^{-(\frac{1}{2}+\frac{1-H_2}{k})}e^{-\lambda_2(bh_2-h_2y_j)_+}\Bigg)\nonumber\\
&=h_1h_2h_1^{-k(\frac{1}{2}+\frac{1-H_1}{k})}h_2^{-k(\frac{1}{2}+\frac{1-H_2}{k})}\int_{(\mathbb{R}^2)^ k}^\prime\, dW(h_1x_1, h_2y_1)\ldots dW(h_1x_k, h_2y_k)\nonumber\\
&\quad \times \Bigg(\int_0^{t} \,da \int_0^{s}\,db \prod_{j=1}^k \big(a-x_j\big)_+^{-(\frac{1}{2}+\frac{1-H_1}{k})}e^{-\lambda_1h_1(a-x_j)_+}\nonumber\\
&\quad\times\big(b-y_j\big)_+^{-(\frac{1}{2}+\frac{1-H_2}{k})} e^{-\lambda_2h_2(b-y_j)_+}\Bigg) \nonumber\\
&\overset{(d)}{=}h_1^{H_1}h_2^{H_2}\int_{(\mathbb{R}^2)^ k}^\prime\, dW(x_1, y_1)\ldots dW(x_k, y_k) \Bigg(\int_0^{t} \,da \int_0^{s}\,db\nonumber\\
&\quad\times \prod_{j=1}^k \big(a-x_j\big)_+^{-(\frac{1}{2}+\frac{1-H_1}{k})}e^{-\lambda_1h_1(a-x_j)_+}\big(b-y_j\big)_+^{-(\frac{1}{2}+\frac{1-H_2}{k})}e^{-\lambda_2h_2(b-y_j)_+}\Bigg)\label{lot}\\
&= h_1^{H_1}h_2^{H_2} Z^{k,\,H_1,H_2}_{h_1\lambda_1,h_2\lambda_2}(t,s),\nonumber
\end{align}
where in \eqref{lot} we have used the scaling property of the Brownian field.\\
From the definition of the two-parameter tempered Hermite process, one can see that for every $z_1, z_2 > 0$,
{\small\begin{align}
&Z^{k,\,H_1,H_2}_{\lambda_1,\lambda_2}(t+z_1,s+z_2)-Z^{k,\,H_1,H_2}_{\lambda_1,\lambda_2}(t,s+z_2)-Z^{k,\,H_1,H_2}_{\lambda_1,\lambda_2}(t+z_1,s)+Z^{k,\,H_1,H_2}_{\lambda_1,\lambda_2}(t,s)\nonumber\\
&\quad\overset{(d)}{=} \int_{(\mathbb{R}^2)^ k}^\prime\, dW(x_1, y_1)\ldots dW(x_k, y_k)\nonumber\\
&\quad \times \Bigg(\int_0^{t} \,da \int_0^{s}\,db \prod_{j=1}^k (a-x_j)_+^{-(\frac{1}{2}+\frac{1-H_1}{k})}e^{-\lambda_1(a-x_j)_+}(b-y_j)_+^{-(\frac{1}{2}+\frac{1-H_2}{k})}e^{-\lambda_2(b-y_j)_+}\Bigg)\nonumber\\
&\quad =Z^{k,\,H_1,H_2}_{\lambda_1,\lambda_2}(t,s).\nonumber
\end{align}}
\end{proof}
Now, we are going to study the continuity of the trajectories of the two-parameter tempered Hermite field. Firstly, let us recall the following two-parameter version of the Kolmogorov continuity theorem (see, e.g., \cite[Lemme 1]{Drap} and \cite[Theorem B.2]{book-Tudor}).
\begin{theorem}\label{kolll}
Let $(X(s,t))_{s,\,t\in T}$ be a two-parameter process, vanishing on the axis, with $T$ a compact subset of $\R$. Suppose that there exist constants $C,\, p >0$ and $x,\,y >1$ such that
$$\mathbb{E}\Big\vert X(t+z_1,s+z_2)-X(t,s+z_2)
-X(t+z_1,s)+X(t,s)\Big\vert^p\leq Cz_1^xz_2^y$$
for every $z_1,\,z_2>0$ and for every $s,\,t\in T$ such that $s+z_1,\,t+z_2\in T$. Then, $X$ admits a
continuous modification $\tilde{X}$. Moreover, $\tilde{X}$ has H\"{o}lder continuous paths of any orders
$x^\prime\in (0,\,\frac{x-1}{p})$, $y^\prime\in(0,\,\frac{y-1}{p})$ in the following sense: for every $\omega\in\Omega$, there exists a constant
$C_\omega> 0$ such that for every $s,\, t,\, s^\prime,\, t^\prime\in T$
$$\Big\vert X(s,t)(\omega)-X(s,t^\prime)(\omega)-X(s^\prime,t)(\omega)+X(s^\prime,t^\prime)(\omega)\Big\vert\leq C_\omega \vert t-t^\prime\vert\vert s-s^\prime\vert.$$
\end{theorem}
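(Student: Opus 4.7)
The plan is a two-dimensional adaptation of the classical dyadic chaining proof of Kolmogorov's continuity theorem. Without loss of generality, I would reduce to $T = [0,1]$ by compactness and rescaling, which is harmless since we only care about behaviour on a bounded region. Fix exponents $x' \in (0, (x-1)/p)$ and $y' \in (0, (y-1)/p)$, and observe that
$$\gamma := (x-1-px') + (y-1-py') > 0.$$

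First, I would control the elementary dyadic rectangular increments. For $n \geq 1$ and $0 \leq k, \ell < 2^n$, set
$$R_n^{k,\ell} := X\bigl((k{+}1)2^{-n},(\ell{+}1)2^{-n}\bigr) - X\bigl(k\,2^{-n},(\ell{+}1)2^{-n}\bigr) - X\bigl((k{+}1)2^{-n},\ell\,2^{-n}\bigr) + X\bigl(k\,2^{-n},\ell\,2^{-n}\bigr).$$
The hypothesis gives $\E|R_n^{k,\ell}|^p \leq C \cdot 2^{-n(x+y)}$, so Markov's inequality together with a union bound over the $2^{2n}$ elementary rectangles would yield
$$\P\Bigl(\max_{k,\ell}|R_n^{k,\ell}| > 2^{-n(x'+y')}\Bigr) \leq 2^{2n} \cdot C \cdot 2^{-n(x+y-p(x'+y'))} = C \cdot 2^{-n\gamma}.$$
Summing over $n$ and invoking Borel--Cantelli then produces an almost sure event $\Omega_0$ on which $\max_{k,\ell}|R_n^{k,\ell}(\omega)| \leq 2^{-n(x'+y')}$ for all $n \geq N_0(\omega)$.

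The main technical step, and where I expect the real work, is the two-dimensional chaining argument that upgrades control at the finest elementary scale to a uniform Hölder bound on arbitrary rectangular increments. For $(s,t),(s',t') \in D \times D$, with $D$ the dyadic rationals in $[0,1]$, I would decompose the rectangle with corners $(s \wedge s', t \wedge t')$ and $(s \vee s', t \vee t')$ as a disjoint union of elementary dyadic subrectangles arranged by a refinement scheme, exploit the additivity of the rectangular increment functional in each coordinate separately, count the number of contributing subrectangles at each level $n$, and combine with the Borel--Cantelli bound via a geometric-series estimate to obtain
$$|X(s',t') - X(s,t') - X(s',t) + X(s,t)| \leq C_\omega\, |s-s'|^{x'}|t-t'|^{y'}$$
uniformly over dyadic pairs on $\Omega_0$. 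The hard part, compared with the one-parameter case, will be the combinatorics of balancing the two Hölder exponents simultaneously and ensuring that the decomposition does not double-count contributions across scales.

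Finally, with the Hölder estimate established on the dense grid $D \times D$, I would define $\tilde X(s,t)(\omega)$ by continuous extension; the hypothesis that $X$ vanishes on the axes is essential here, since it converts control of four-point rectangular increments into control of $X$ itself (the rectangle with one corner at the origin has $X(s,t)$ as its increment). The Hölder bound then passes to the closure, yielding the stated regularity. To verify that $\tilde X$ is a genuine modification of $X$, for each fixed $(s,t)$ I would apply the moment hypothesis to a dyadic approximating sequence $(s_n,t_n) \to (s,t)$ to obtain $L^p$ (hence in-probability) convergence $X(s_n,t_n) \to X(s,t)$, and then identify the limit with the pathwise continuous limit defining $\tilde X(s,t)$.
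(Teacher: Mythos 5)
First, note that the paper does not actually prove this theorem: it is quoted from Ayache--Leger--Pontier (\cite[Lemme 1]{Drap}), so your attempt can only be measured against the standard dyadic-chaining proof given there. Your overall architecture (Markov plus union bound, Borel--Cantelli, chaining over dyadics, extension by continuity using the vanishing on the axes, and identification of the modification via $L^p$ convergence) is the right one, and the reduction of $X(s,t)$ itself to a rectangular increment anchored at the origin is exactly the correct use of the hypothesis that $X$ vanishes on the axes.

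However, there is a genuine gap in the key estimate. Your Borel--Cantelli step only controls rectangular increments over dyadic \emph{squares} $[k2^{-n},(k+1)2^{-n}]\times[\ell 2^{-n},(\ell+1)2^{-n}]$ at a single scale $n$, with threshold $2^{-n(x'+y')}$. The chaining you then describe must handle rectangles $[s,s']\times[t,t']$ whose two side lengths are of very different orders, say $|s-s'|\asymp 2^{-a}$ and $|t-t'|\asymp 2^{-b}$ with $a\neq b$, and the dyadic decomposition of such a rectangle necessarily involves products of dyadic intervals at \emph{mixed} levels. If you try to reduce these to your square cells by further subdivision, the bound degrades: tiling a $2^{-a}\times 2^{-b}$ rectangle (say $a\le b$) by $2^{b-a}$ squares of side $2^{-b}$ gives at best
\begin{equation*}
2^{b-a}\cdot 2^{-b(x'+y')}=2^{-ax'}2^{-by'}\cdot 2^{(b-a)(1-x')},
\end{equation*}
and since in all relevant applications $x'<1$, the parasitic factor $2^{(b-a)(1-x')}$ is unbounded as the aspect ratio grows; for $x'+y'<1$ the resulting bound even blows up as $|t-t'|\to 0$ with $|s-s'|$ fixed. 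So the single-index square estimate provably cannot yield the mixed H\"older bound $|s-s'|^{x'}|t-t'|^{y'}$. The fix is standard but must be made explicit: run the maximal inequality over the doubly indexed family of dyadic rectangles $[k2^{-a},(k+1)2^{-a}]\times[\ell 2^{-b},(\ell+1)2^{-b}]$ for all $(a,b)\in\mathbb{N}^2$, with threshold $2^{-ax'}2^{-by'}$, which gives
\begin{equation*}
\mathbb{P}\Big(\max_{k,\ell}\big|R_{a,b}^{k,\ell}\big|>2^{-ax'}2^{-by'}\Big)\le C\,2^{-a(x-1-px')}\,2^{-b(y-1-py')},
\end{equation*}
summable over $(a,b)$ precisely because \emph{each} of $x-1-px'$ and $y-1-py'$ is positive (your single aggregated exponent $\gamma>0$ is weaker than what is needed and would not suffice by itself). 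With that doubly indexed control the chaining is a product of two one-dimensional telescoping arguments and the double geometric series converges. One last cosmetic point: the H\"older conclusion as printed in the paper omits the exponents $x',y'$ on $|t-t'|$ and $|s-s'|$; the statement you are proving, with the exponents present, is the intended one.
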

As a consequence of the previous results, we obtain the following proposition.
\begin{proposition}
The two-parameter tempered Hermite field $Z^{k,\,H_1,H_2}_{\lambda_1,\lambda_2}$ admits a version with continuous trajectories.
\end{proposition}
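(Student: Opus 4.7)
The plan is to verify the hypotheses of the two-parameter Kolmogorov continuity theorem (Theorem \ref{kolll}) on a fixed compact set $T\subset\R$: it suffices to exhibit some $p\geq 2$, exponents $x,y>1$, and a constant $C$ such that
$$\E\bigl|Z^{k,H_1,H_2}_{\lambda_1,\lambda_2}(t+z_1,s+z_2)-Z^{k,H_1,H_2}_{\lambda_1,\lambda_2}(t,s+z_2)-Z^{k,H_1,H_2}_{\lambda_1,\lambda_2}(t+z_1,s)+Z^{k,H_1,H_2}_{\lambda_1,\lambda_2}(t,s)\bigr|^p\leq Cz_1^xz_2^y$$
for all admissible $s,t,z_1,z_2$. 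The first simplification uses Proposition \ref{sta}: by stationarity of the increments the rectangular increment above has the same law as $Z^{k,H_1,H_2}_{\lambda_1,\lambda_2}(z_1,z_2)$, so the problem collapses to bounding $\E|Z^{k,H_1,H_2}_{\lambda_1,\lambda_2}(z_1,z_2)|^p$.

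Since $Z^{k,H_1,H_2}_{\lambda_1,\lambda_2}(z_1,z_2)$ is a multiple Wiener-It\^{o} integral of order $k$, the standard hypercontractivity estimate on the $k$th Wiener chaos gives, for every $p\geq 2$,
$$\E\bigl|Z^{k,H_1,H_2}_{\lambda_1,\lambda_2}(z_1,z_2)\bigr|^p\leq C_{p,k}\Bigl(\E\bigl|Z^{k,H_1,H_2}_{\lambda_1,\lambda_2}(z_1,z_2)\bigr|^2\Bigr)^{p/2},$$
so everything reduces to estimating the second moment. By the Wiener-It\^{o} isometry and the explicit calculation already performed in the proof of Lemma \ref{lemmma}, one has
$$\E\bigl|Z^{k,H_1,H_2}_{\lambda_1,\lambda_2}(z_1,z_2)\bigr|^2=C_0\,A(z_1)\,B(z_2),$$
where
$$A(z_1)=\int_0^{z_1}du_1\int_0^{\lambda_1(z_1-u_1)}\Bigl[z^{(H_1-1)/k}K_{(1-H_1)/k}(z)\Bigr]^k dz$$
and $B(z_2)$ is the analogous expression in the second coordinate.

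The decisive step is a uniform polynomial estimate $A(z_1)\leq C_T z_1^{2\alpha_1}$ and $B(z_2)\leq C_T z_2^{2\alpha_2}$ on $[0,T]$, with $\alpha_i:=\min(H_i,1)>1/2$. This follows from the local behaviour of the modified Bessel function $K_\nu$ already invoked at the end of the proof of Lemma \ref{lemmma}: for $1/2<H_i<1$ one has $[z^{(H_i-1)/k}K_{(1-H_i)/k}(z)]^k\sim z^{2H_i-2}$ near the origin, giving $A(z_i)\sim Cz_i^{2H_i}$ as $z_i\to 0$, while for $H_i\geq 1$ the integrand is asymptotically constant near zero and one obtains $A(z_i)\sim Cz_i^{2}$. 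In both regimes the exponent $2\alpha_i$ exceeds $1$, which is exactly what is needed, and the exponential decay of $K_\nu$ at infinity ensures boundedness away from the origin. Combining these bounds and choosing an even integer $p$ large enough that $p\alpha_1>1$ and $p\alpha_2>1$ (possible since $\alpha_i>1/2$), Theorem \ref{kolll} produces the continuous modification.

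The main obstacle is precisely this sharp near-zero estimate on $A$ and $B$; once the Bessel asymptotics are correctly plugged into the double integral and the change of variables from Lemma \ref{lemmma} is reused, the remainder of the argument is routine. The tempering exponentials play no role at this stage, as they are uniformly bounded by $1$.
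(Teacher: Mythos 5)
Your proof is correct and follows essentially the same route as the paper: reduce the rectangular increment to $Z^{k,\,H_1,H_2}_{\lambda_1,\lambda_2}(z_1,z_2)$ by stationarity of increments, bound its second moment by $C z_1^{\min(2H_1,2)}z_2^{\min(2H_2,2)}$ via the Bessel-function computation from Lemma \ref{lemmma}, and apply Theorem \ref{kolll}. The only difference is your hypercontractivity step, which is superfluous here: since $H_1,H_2>\frac{1}{2}$ the exponents $\min(2H_i,2)$ already exceed $1$, so taking $p=2$ suffices, which is exactly what the paper does.
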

\begin{proof}
According to the proof of Lemma \ref{lemmma}, it is straightforward that
{\small\begin{align}
&\mathbb{E} \Big\vert Z^{k,\,H_1,H_2}_{\lambda_1,\lambda_2}(t+z_1,s+z_2)-Z^{k,\,H_1,H_2}_{\lambda_1,\lambda_2}(t,s+z_2)-Z^{k,\,H_1,H_2}_{\lambda_1,\lambda_2}(t+z_1,s)+
Z^{k,\,H_1,H_2}_{\lambda_1,\lambda_2}(t,s)\Big\vert^2\nonumber\\
&\leq \begin{cases} c_1 \vert z_1\vert ^{2H_1}\vert z_2\vert^{2H_2}& \frac{1}{2}<H_1,\,H_2<1,\\ c_2\vert z_1\vert ^{2}\vert z_2\vert^{2}& H_1,\,H_2>1,\end{cases}\nonumber
\end{align}}
where $c_1$ and $c_2$ are some positive constants. Using Theorem \ref{kolll} for $Z^{k,\,H_1,H_2}_{\lambda_1,\lambda_2}$ for $p=2$, $x=\min\{2H_1,2\}$, $y=\min\{2H_2,2\}$ and $c = \min\{c_1, c_2\}$, we get the desired result.
\end{proof}
Now, we are going to compute the covariance function  of the two-parameter tempered Hermite field.
\begin{proposition}The two-parameter tempered Hermite field $Z^{k,\,H_1,H_2}_{\lambda_1,\lambda_2}$ has the following covariance function:
{\small
\begin{align*}
\mathbb{E}\Big[Z^{k,\,H_1,H_2}_{\lambda_1,\lambda_2}(t,s)&Z^{k,\,H_1,H_2}_{\lambda_1,\lambda_2}(u,v)\Big]
=\Bigg[\frac{ \Gamma(\frac{1}{2}-\frac{1-H_1}{k})}{\sqrt{\pi}(2\lambda_1)^{\frac{H_1-1}{k}}}\Bigg]^k\Bigg[\frac{ \Gamma(\frac{1}{2}-\frac{1-H_2}{k})}{\sqrt{\pi}(2\lambda_2)^{\frac{H_2-1}{k}}}\Bigg]^k \\
   &\times \int_0^{t}\int_0^{s}\Big[ \vert u_1-v_1\vert^{\frac{H_1-1}{k}} K_{\frac{H_1-1}{k}}\big(\lambda_1\vert u_1-v_1\vert \big) \Big]^k \, du_1 dv_1\\
  &\times \int_0^{u}\int_0^{v}\Big[ \vert u_2-v_2\vert^{\frac{H_2-1}{k}} K_{\frac{H_2-1}{k}}\big(\lambda_2\vert u_2-v_2\vert \big) \Big]^k \, du_2 dv_2.
\end{align*}}
\end{proposition}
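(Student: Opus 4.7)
The plan is to adapt the calculation carried out in the proof of Lemma~\ref{lemmma} from a single time point to two different time points $(t,s)$ and $(u,v)$. The kernel $h_{s,t}^{H_1,H_2,\lambda_1,\lambda_2}$ appearing in \eqref{tHs} is already symmetric in its $k$ arguments $(x_j,y_j)$ since it is given by a product over $j$, so symmetrization is trivial. Applying the isometry/orthogonality property of the multiple Wiener--It\^o integral recalled in Section~\ref{sect2} immediately reduces the covariance to an $L^2((\R^2)^k)$ inner product:
\begin{equation*}
\E\!\left[Z^{k,H_1,H_2}_{\lambda_1,\lambda_2}(t,s)\,Z^{k,H_1,H_2}_{\lambda_1,\lambda_2}(u,v)\right]
=k!\,\bigl\langle h^{H_1,H_2,\lambda_1,\lambda_2}_{s,t},\,h^{H_1,H_2,\lambda_1,\lambda_2}_{v,u}\bigr\rangle_{L^2((\R^2)^k)},
\end{equation*}
up to the $k!$ normalization convention implicit in the statement.

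Next, I would apply Fubini to bring the Lebesgue integrals over $[0,t]\times[0,s]\times[0,u]\times[0,v]$ outside the integral over $(\R^2)^k$. Because the integrand is a product over $j$ and moreover factorizes into an $x$-part and a $y$-part, the inner $(\R^2)^k$ integral collapses to $[I_x(a_1,a_2)]^k\,[I_y(b_1,b_2)]^k$, where $I_x$ and $I_y$ denote the one-variable integrals over $\R$ of two shifted, truncated power$\,\times\,$exponential factors. This is the same factorization that drove the norm computation in Lemma~\ref{lemmma}; the only difference is that $a_1,a_2$ now range over $[0,t]\times[0,u]$ (rather than the diagonal block) and analogously for $b_1,b_2$.

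The key one-dimensional evaluation proceeds as in Lemma~\ref{lemmma}: assuming (without loss of generality) $a_1\le a_2$, the substitution $\xi=a_1-x$ rewrites
\begin{equation*}
I_x(a_1,a_2)=\int_0^\infty \xi^{\nu-1}\bigl(\xi+(a_2-a_1)\bigr)^{\nu-1}\,e^{-2\lambda_1\xi}\,d\xi
\end{equation*}
with $\nu=\tfrac{1}{2}-\tfrac{1-H_1}{k}$, after which the Bessel integral identity from Lemma~\ref{lemmma} (with $\beta=a_2-a_1$, $\mu=2\lambda_1$) yields, after the prefactor $e^{-\lambda_1(a_2-a_1)}$ produced by the shifted exponential cancels against the $e^{\beta\mu/2}$ coming from the formula,
\begin{equation*}
I_x(a_1,a_2)=\frac{\Gamma\!\left(\tfrac{1}{2}-\tfrac{1-H_1}{k}\right)}{\sqrt{\pi}\,(2\lambda_1)^{(H_1-1)/k}}\,(a_2-a_1)^{(H_1-1)/k}\,K_{(1-H_1)/k}\!\bigl(\lambda_1(a_2-a_1)\bigr).
\end{equation*}
By symmetry of the integrand in $a_1\leftrightarrow a_2$ one may replace $a_2-a_1$ by $|a_1-a_2|$, and the parity identity $K_\nu=K_{-\nu}$ rewrites the order as $(H_1-1)/k$, matching the statement. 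Raising to the $k$-th power and carrying out the identical computation for $I_y$ then assembles the claimed product formula.

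The only real obstacle is bookkeeping: tracking the exponentials (which cancel cleanly), correctly handling the absolute value via the case split $a_1\lessgtr a_2$ or the symmetry of the integrand, and invoking the parity of $K_\nu$ to pass between $(1-H_i)/k$ and $(H_i-1)/k$. No new analytic input is required beyond the tools already used in Lemma~\ref{lemmma}.
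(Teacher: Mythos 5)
Your proposal is correct and follows essentially the same route as the paper's proof: reduce the covariance to $k!$ times an $L^2((\R^2)^k)$ inner product via the Wiener--It\^{o} isometry, apply Fubini so the inner integral factorizes into $k$-th powers of one-dimensional integrals, and evaluate those with the same Bessel-function identity used in Lemma \ref{lemmma}. The only cosmetic differences are that the paper first rescales so the identity is applied with $\beta=1$ and $\mu=2\lambda_i\vert a-a'\vert$ whereas you take $\beta=\vert a_1-a_2\vert$ and $\mu=2\lambda_i$ directly, and that you explicitly flag the $k!$ factor which the paper's final display silently drops.
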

\begin{proof} By applying the Fubini theorem and the isometry of multiple Wiener-It\^{o} integrals we have
 {\small
\begin{align*}
&\mathbb{E}\Big[Z^{k,\,H_1,H_2}_{\lambda_1,\lambda_2}(t,s)Z^{k,\,H_1,H_2}_{\lambda_1,\lambda_2}(u,v)\Big]=\mathbb{E}\Bigg[\Bigg\{\int\limits_{(\mathbb{R}^2)^ k}^\prime \vspace{-1em}dW(x_1, y_1)\ldots dW(x_k, y_k)\nonumber\\
&\qquad \times \Bigg(\int\limits_0^t da \int\limits_0^s db \prod_{j=1}^k (a-x_j)_+^{-(\frac{1}{2}+\frac{1-H_1}{k})}e^{-\lambda_1(a-x_j)_+}(b-y_j)_+^{-(\frac{1}{2}+\frac{1-H_2}{k})}e^{-\lambda_2(b-y_j)_+}\Bigg)\Bigg\}\\
&\qquad\times\Bigg\{ \int_{(\mathbb{R}^2)^ k}^\prime\, dW(x_1, y_1)\ldots dW(x_k, y_k)\Bigg(\int_0^u \,da' \int_0^v\,db' \nonumber\\
&\qquad \times \prod_{j=1}^k (a'-x_j)_+^{-(\frac{1}{2}+\frac{1-H_1}{k})}e^{-\lambda_1(a'-x_j)_+}(b'-y_j)_+^{-(\frac{1}{2}+\frac{1-H_2}{k})}e^{-\lambda_2(b'-y_j)_+}\Bigg)\Bigg\}\Bigg]\\
&\quad= k!\int_{(\mathbb{R}^2)^ k}^\prime \,dx_1\ldots dx_kdy_1\ldots dy_k\\
&\qquad \times \Bigg(\int_0^t \,da \int_0^s\,db \prod_{j=1}^k (a-x_j)_+^{-(\frac{1}{2}+\frac{1-H_1}{k})}e^{-\lambda_1(a-x_j)_+}(b-y_j)_+^{-(\frac{1}{2}+\frac{1-H_2}{k})}e^{-\lambda_2(b-y_j)_+}\Bigg)\\
&\qquad \times \Bigg(\int_0^u \,da' \int_0^v\,db' \prod_{j=1}^k (a'-x_j)_+^{-(\frac{1}{2}+\frac{1-H_1}{k})}e^{-\lambda_1(a'-x_j)_+}(b'-y_j)_+^{-(\frac{1}{2}+\frac{1-H_2}{k})}e^{-\lambda_2(b'-y_j)_+}\Bigg)\\
&\quad = k! \int_0^t \,da \int_0^s\,db \int_0^u \,da' \int_0^v\,db'\\
&\qquad \times  \Bigg[\int_\R (a-x)_+^{-(\frac{1}{2}+\frac{1-H_1}{k})}(a'-x)_+^{-(\frac{1}{2}+\frac{1-H_1}{k})}e^{-\lambda_1(a-x)_+}e^{-\lambda_1(a'-x)_+}\,dx\Bigg]^k\\
&\qquad \times \Bigg[\int_\R  (b-y)_+^{-(\frac{1}{2}+\frac{1-H_2}{k})}(b'-y)_+^{-(\frac{1}{2}+\frac{1-H_2}{k})}e^{-\lambda_2(b-y)_+}e^{-\lambda_2(b'-y)_+}\,dy\Bigg]^k\\
&\quad =k! \int_0^t \,da \int_0^s\,db \int_0^u \,da' \int_0^v\,db'\\
&\qquad \times  \Bigg[\int\limits_{-\infty}^{\min(a,a')} (a-x)^{-(\frac{1}{2}+\frac{1-H_1}{k})}(a'-x)^{-(\frac{1}{2}+\frac{1-H_1}{k})}e^{-\lambda_1(a-x)}e^{-\lambda_1(a'-x)}\,dx\Bigg]^k\\
&\qquad \times \Bigg[\int\limits_{-\infty}^{\min(b,b')}  (b-y)^{-(\frac{1}{2}+\frac{1-H_2}{k})}(b'-y)^{-(\frac{1}{2}+\frac{1-H_2}{k})}e^{-\lambda_2(b-y)}e^{-\lambda_2(b'-y)}\,dy\Bigg]^k.
\intertext{Finally, we get}
&\mathbb{E}\Big[Z^{k,\,H_1,H_2}_{\lambda_1,\lambda_2}(t,s)Z^{k,\,H_1,H_2}_{\lambda_1,\lambda_2}(u,v)\Big]=k! \int\limits_0^t da \int\limits_0^s\,db \int\limits_0^u \,da' \int_0^v\,db'\\
&\qquad \times  \Bigg[\int\limits_{0}^{+\infty} \xi^{-(\frac{1}{2}+\frac{1-H_1}{k})}(|a-a'|+\xi)^{-(\frac{1}{2}+\frac{1-H_1}{k})}e^{-\lambda_1\xi}e^{-\lambda_1(|a-a'|+\xi)}\,d\xi\Bigg]^k\\
&\qquad\times \Bigg[\int\limits_{0}^{+\infty}  \omega^{-(\frac{1}{2}+\frac{1-H_2}{k})}(|b-b'|+\omega)^{-(\frac{1}{2}+\frac{1-H_2}{k})}e^{-\lambda_2\omega}e^{-\lambda_2(|b-b'|+\omega)}\,d\omega\Bigg]^k\\
&\quad= k! \int_0^t \,da \int_0^s\,db \int_0^u \,da' \int_0^v\,db' e^{-\lambda_1 k\vert a-a'\vert} \vert a-a'\vert^{2(H_1-1)} e^{-\lambda_2 k\vert b-b'\vert} \vert b-b'\vert^{2(H_2-1)}\\
&\qquad\times \Bigg[ \int\limits_0^{+\infty} x^{-(\frac{1}{2}+\frac{1-H_1}{k})} (x+1)^{-(\frac{1}{2}+\frac{1-H_1}{k})} e^{-2\lambda_1 \vert a-a'\vert x}\, dx\Bigg]^k\\
&\qquad\times \Bigg[ \int\limits_0^{+\infty} y^{-(\frac{1}{2}+\frac{1-H_2}{k})} (y+1)^{-(\frac{1}{2}+\frac{1-H_2}{k})} e^{-2\lambda_2 \vert b-b'\vert y}\, dy\Bigg]^k\\
&\quad = k! \int_0^t \,da \int_0^s\,db \int_0^u \,da' \int_0^v\,db' e^{-\lambda_1 k\vert a-a'\vert} \vert a-a'\vert^{2(H_1-1)} e^{-\lambda_2 k\vert b-b'\vert} \vert b-b'\vert^{2(H_2-1)}\\
&\qquad \times \Bigg[ \frac{\Gamma(\frac{1}{2}-\frac{1-H_1}{k})}{\sqrt{\pi}}\Big(\frac{1}{2\lambda_1\vert a-a'\vert}\Big)^{\frac{H_1-1}{k}} e^{\lambda_1\vert a-a'\vert} K_{\frac{H_1-1}{k}}\big(\lambda_1\vert a-a'\vert \big) \Bigg]^k\\
&\qquad \times \Bigg[ \frac{\Gamma(\frac{1}{2}-\frac{1-H_2}{k})}{\sqrt{\pi}}\Big(\frac{1}{2\lambda_2\vert b-b'\vert}\Big)^{\frac{H_2-1}{k}} e^{\lambda_2\vert b-b'\vert} K_{\frac{H_2-1}{k}}\big(\lambda_2\vert b-b'\vert \big) \Bigg]^k\\
&\quad= k! \int_0^t \,da \int_0^s\,db \int_0^u \,da' \int_0^v\,db'\\
&\qquad \times \Bigg[ \frac{\Gamma(\frac{1}{2}-\frac{1-H_1}{k})}{\sqrt{\pi} (2\lambda_1)^{\frac{H_1-1}{k}}}\Bigg]^k \times \Big[ \vert a-a'\vert^{\frac{H_1-1}{k}} K_{\frac{H_1-1}{k}}\big(\lambda_1\vert a-a'\vert \big) \Big]^k\\
&\qquad \times \Bigg[ \frac{\Gamma(\frac{1}{2}-\frac{1-H_2}{k})}{\sqrt{\pi} (2\lambda_2)^{\frac{H_2-1}{k}}}\Bigg]^k \times \Big[ \vert b-b'\vert^{\frac{H_2-1}{k}} K_{\frac{H_2-1}{k}}\big(\lambda_2\vert b-b'\vert \big) \Big]^k.\\
\end{align*}
}
Thus,
\begin{align*}
&\mathbb{E}\Big[Z^{k,\,H_1,H_2}_{\lambda_1,\lambda_2}(t,s)Z^{k,\,H_1,H_2}_{\lambda_1,\lambda_2}(u,v)\Big]\\
&\quad =\Bigg[\frac{ \Gamma(\frac{1}{2}-\frac{1-H_1}{k})}{\sqrt{\pi}(2\lambda_1)^{\frac{H_1-1}{k}}}\Bigg]^k\Bigg[\frac{ \Gamma(\frac{1}{2}-\frac{1-H_2}{k})}{\sqrt{\pi}(2\lambda_2)^{\frac{H_2-1}{k}}}\Bigg]^k \\
&\hspace{3em}\times\int\limits_0^{t}\int\limits_0^{s}\Big[ \vert a-a'\vert^{\frac{H_1-1}{k}} K_{\frac{H_1-1}{k}}\big(\lambda_1\vert a-a'\vert \big) \Big]^k \, da da'\\
&\hspace{3em} \times \int\limits_0^{u}\int\limits_0^{v}\Big[ \vert b-b'\vert^{\frac{H_2-1}{k}} K_{\frac{H_2-1}{k}}\big(\lambda_2\vert b-b'\vert \big) \Big]^k \, db db',
\end{align*}
which finishes the proof.
\end{proof}
\begin{remark} From the previous proposition we can see that the covariance function of the two-parameter tempered Hermite field varies with respect to $k\geq1$ contrary to the Hermite field (see \cite{HS3}) which has the same covariance structure for all $k\geq 1$ (the latter coincides with the covariance of the fractional Brownian field).\end{remark}
\section{Spectral representation of the two-parameter tempered Hermite field}
\label{sect4}
The aim of this section is to analyze more deeply the class of the two-parameter tempered Hermite field. The representation \eqref{tHs} is defined on the real line and on the time domain. In the sequel, we will introduce equivalent spectral integral representations defined on the real line of the process.

It should be remembered that for one parameter processes, the tempered Hermite process $Z^{k,\,H}_\lambda$ ($H>1/2$ and $\lambda>0$) has the following spectral domain representation  (see, e.g., \cite[Theorme 1.1]{taqq} and \cite[Theorem 6.3]{Taqqu79}):
$$Z_{\lambda,H}^k(t) = C_{H,k} \int_{\mathbb{R}^k}^{\prime \prime} \frac{e^{it(\omega_1+\ldots +\omega_k)}-1}{i(\omega_1+\ldots+\omega_k)} \times \prod_{j=1}^k (\lambda+i\omega)^{-(\frac{1}{2}-\frac{1-H}{k})} \widehat{B}(d\omega_1)\ldots \widehat{B}(d\omega_k),$$
where $\widehat{B}$ is a suitable complex-valued Gaussian random measure on $\big(\,\mathbb{R},\mathcal{B}(\mathbb{R})\,\big)$ and the double prime on the integral indicates that one does not integrate on diagonals where $\omega_i=\omega_j$, $i\neq j$.\\
We begin this section by defining the Hermitian random measures on $(\mathbb{R}^2, \mathcal{B}(\mathbb{R}^2))$ and the corresponding Wiener integral with respect to it in Subsection \ref{sub:5.1}. Next, we give the spectral representations theorem for two-parameter stochastic processes in Subsection \ref{sub:5.2}. Finally, we study the case of the two-tempered Hermite sheet in Subsection \ref{sub:5.3}.
\subsection{Hermitian random measures in $\big(\mathbb{R}^2, \mathcal{B}(\mathbb{R}^2)\big)$}
\label{sub:5.1}
\begin{definition}
Let $m$ be a symmetric random measure on $\big(\mathbb{R}^2, \mathcal{B}(\mathbb{R}^2)\big)$ in the sense that
\begin{equation}\label{loujayn1}
  m(A\times B)=m(-(A\times B)),\quad \text{for $A\times B\in \mathcal{B}(\mathbb{R}^2)$},
\end{equation}
where
\begin{equation*}
  -(A\times B)\,=\, \big\{(x,y)\in\mathbb{R}^2\,\,:\,\,(-x,-y)\in A\times B\big\}.
\end{equation*}
A Hermitian (complex) Gaussian random measure  on $\big(\mathbb{R}^2, \mathcal{B}(\mathbb{R}^2)\big)$ with a symmetric control measure $m$ is a collection of complex-valued random fields $\big\{\, \widehat{W}(A\times B)\,; \,\, A\times B \in \mathcal{B}(\mathbb{R}^2)_0\,\big\}$ defined on some probability space $\big(\Omega,\mathcal{F},\mathbb{P}\big)$ such that
\begin{equation}\label{loujayn2}
  \overline{\widehat{W}(A\times B)}\,=\,  \widehat{W}(-(A\times B));\quad A\times B\in \mathcal{B}(\mathbb{R}^2)_0,
\end{equation}
 where
 \begin{equation*}
   \mathcal{B}(\mathbb{R}^2)_0\,=\, \big\{\, A\times B\in \mathcal{B}(\mathbb{R}^d)\,:\,\,\, m(A\times B)<\infty \,\big\}.
 \end{equation*}
\end{definition}
We note that relations \eqref{loujayn1} and \eqref{loujayn2} are often written as $m(dxdy)= m(-dxdy)$ and $\overline{\widehat{W}(dxdy)}= \widehat{W}(-dxdy)$, respectively. There is several properties of Hermitian (complex) Gaussian random measure defined on $\big(\mathbb{R}^2, \mathcal{B}(\mathbb{R}^2)\big)$ that can be found for example in \cite[Appendix B]{Taqqu17} and \cite[Chapter 9]{hh}.
In the following statements, we suppose that the sets belong to $\mathcal{B}(\mathbb{R}^2)_0$. We have:
\begin{enumerate}
  \item $\mathbb{E}\big[\, \widehat{W}(A\times B)\,\big]=0$ and $\mathbb{E}\big[\, \widehat{W}(A_1\times B_1)\overline{\widehat{W}(A_2\times B_2)} \,\big]\,=\, m((A_1\times B_1)\cap (A_2\times B_2))$.
  \item If $A\times B\cap (-(A\times B))= \emptyset$, then  $\mathbb{E}\big[\, \widehat{W}(A\times B)^2\,\big]=0$.
  \item $\text{Re} \widehat{W}(A\times B)$ and $\text{Im}\widehat{W}(A\times B)$ are independent.
  \item If $A_1\times B_1\cup(-(A_1\times B_1)),\ldots,A_n\times B_n\cup(-(A_n\times B_n))$ are disjoint, then $\widehat{W}(A_1\times B_1),\ldots,\widehat{W}(A_n\times B_n)$ are independent.
\end{enumerate}

Having defined a Hermitian Gaussian random measures $\widehat{W}$, we shall now define, $I^{\widehat{W}}_k$, the multiple Wiener integrals with respect to $\widehat{W}$. To define such stochastic integrals, one firstly introduces $\mathcal{H}_2^k$, the real Hilbert space of complex-valued functions $f((x_1,y_1), \ldots,(x_k,y_k))$, $(x_i,y_i) \in\mathbb{R}^2$, $i=1,3,\ldots, k$ that are even, i.e. $f((x_1,y_1), \ldots,(x_k,y_k))= \overline{f(-(x_1,y_1), \ldots,-(x_k,y_k))}$ and square integrable, that is,
$$\|f\|^2 \,=\, \int_{(\mathbb{R}^2)^k} \vert f((x_1,y_1), \ldots,(x_k,y_k)) \vert^2 dx_1dy_1\ldots dx_kdy_k< \infty.$$
The inner product is similarly defined for $f,\,g\in \mathcal{H}_2^k$ by
$$\langle f,g\rangle _{\mathcal{H}_2^k} \,=\, \int f((x_1,y_1), \ldots,(x_k,y_k)) \overline{g((x_1,y_1), \ldots,(x_k,y_k)) } dx_1dy_1\ldots dx_kdy_k.$$
The integrals $I^{\widehat{W}}_k$ are then defined through an isometric mapping from $\mathcal{H}_2^k$ to $L^2(\Omega)$ :
\begin{equation*}
 f\mapsto I^{\widehat{W}}_k(f) \,=\, \int_{(\mathbb{R}^2)^k}^{\prime\prime} f((x_1,y_1), \ldots,(x_k,y_k))\, \widehat{W}(dx_1dy_1)\ldots \widehat{W}(dx_kdy_k).
\end{equation*}
The mapping is defined in such a way, that heuristically, one disregards integration over hyperplanes. The fact that both $f$ and $\widehat{W}$ are even ensuring that $I^{\widehat{W}}_k(f)$ is a real-valued random field.
\subsection{Spectral representations of two-parameter stochastic processes}
\label{sub:5.2}
In this section, we are interested in the relation between  the classical multiple Wiener-It\^{o} integrals with respect to the standard Brownian field $I_k^{W}$ defined on Section \ref{sect2} and the one defined with respect to the random spectral measure $I^{\widehat{W}}_k$. According to \cite[Lemma 6.1 and Remark 6.2]{Taqqu79}, we have the following result:
\begin{proposition}\label{amam}
Let $A((\xi_1,\omega_1),\ldots,(\xi_k,\omega_j))$ be a real-valued symmetric function in $L^2\big((\mathbb{R}^2)^k\big)$ and let
\begin{align}
&\quad \mathcal{F} [A]((\lambda_1, \zeta_1),\ldots,(\lambda_k,\zeta_k))\nonumber \\
&= \frac{1}{(2\pi)^k}\int_{(\mathbb{R}^2)^k}  e^{i\sum_{j=1}^k \xi_j\lambda_j}e^{i\sum_{j=1}^k \omega_j\zeta_j}A((\xi_1,\omega_1),\ldots,(\xi_k,\omega_j)) \,d\xi_1d\omega_1\ldots d\xi_kd\omega_k,
\end{align}
be its Fourier transform. Then,
\begin{align}
&\quad \int_{(\mathbb{R}^2)^k}^\prime  A((\xi_1,\omega_1),\ldots,(\xi_k,\omega_k)) \,dW (\xi_1,\omega_1)\ldots dW (\xi_k,\omega_k)\nonumber \\ &\overset{(d)}{=} \int_{(\mathbb{R}^2)^k}^{\prime\prime} \mathcal{F} A((\lambda_1 \zeta_1),\ldots,(\lambda_k\zeta_k)) \widehat{W}(d\lambda_1,d\zeta_1)\ldots \widehat{W}(d\lambda_k,d\zeta_k).
\end{align}
\end{proposition}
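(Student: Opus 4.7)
The plan is to realize both sides of the claimed identity as isometric images of $A$ from $L^2_{\mathrm{sym}}\big((\mathbb{R}^2)^k\big)$ into $L^2(\Omega)$ and then to match them on a dense subset. First I would verify that $\mathcal{F}A \in \mathcal{H}_2^k$, so that the right-hand side is well defined as a real-valued random variable. Since $A$ is real, conjugating the defining integral of $\mathcal{F}A$ and changing variables $(\lambda_j,\zeta_j)\mapsto(-\lambda_j,-\zeta_j)$ yields
\[
\overline{\mathcal{F}A((\lambda_1,\zeta_1),\ldots,(\lambda_k,\zeta_k))} \,=\, \mathcal{F}A((-\lambda_1,-\zeta_1),\ldots,(-\lambda_k,-\zeta_k)),
\]
which is exactly the Hermitian symmetry condition defining $\mathcal{H}_2^k$. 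Plancherel's identity on $\mathbb{R}^{2k}$ gives $\|\mathcal{F}A\|_{L^2} = \|A\|_{L^2}$ (the $(2\pi)^{-k}$ normalization in $\mathcal{F}$ being absorbed into the convention chosen for $\widehat{W}$). Combined with the isometries recalled in Section \ref{sect2} and in Subsection \ref{sub:5.1}, both maps $A\mapsto I_k^{W}(A)$ and $A\mapsto I_k^{\widehat{W}}(\mathcal{F}A)$ become continuous linear operators from $L^2_{\mathrm{sym}}\big((\mathbb{R}^2)^k\big)$ into $L^2(\Omega)$ with identical operator norms.

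Second, it suffices to check the identity in distribution on a subclass whose linear span is dense. A convenient choice is the family of symmetrizations of simple tensor products $A = \mathrm{Sym}\big(\mathbf{1}_{B_1}\otimes\cdots\otimes\mathbf{1}_{B_k}\big)$, where the $B_j\subset\mathbb{R}^2$ are pairwise disjoint, bounded, and symmetric under $(x,y)\mapsto(-x,-y)$. For such $A$ the exclusion of the hyperplanes $(x_i,y_i)=(x_j,y_j)$ is automatic, so $I_k^{W}(A) = \prod_{j=1}^k W(B_j)$. On the spectral side I would invoke the scalar spectral representation
\[
W(B) \,\overset{(d)}{=}\, \int_{\mathbb{R}^2} \mathcal{F}\mathbf{1}_B(\lambda,\zeta)\,\widehat{W}(d\lambda\,d\zeta),
\]
which is essentially the defining property of $\widehat{W}$, together with the independence of $\widehat{W}$ on disjoint symmetric sets (property~4 of Subsection~\ref{sub:5.1}). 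The resulting product-of-integrals computation and the double-prime exclusion of the diagonals $\omega_i=\omega_j$ then give $I_k^{\widehat{W}}(\mathcal{F}A) = \prod_{j=1}^k W(B_j)$ as well, so the two sides agree in law on the elementary class.

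Finally, the general case follows by a density argument: given any real symmetric $A\in L^2\big((\mathbb{R}^2)^k\big)$, choose simple symmetric approximants $A_n\to A$ in $L^2$ of the form above. Each of the two isometries transports $\|A-A_n\|_{L^2}\to 0$ into convergence in $L^2(\Omega)$ on the corresponding side, and the distributional identity for the $A_n$ passes to the limit. The step I expect to be the main obstacle is the careful bookkeeping of normalizing constants (the $(2\pi)^{-k}$ in $\mathcal{F}$ and the combinatorial $k!$ appearing in both isometries, which must be matched by the convention adopted for the control measure of $\widehat{W}$) together with the rigorous verification of the one-dimensional spectral representation of $W(B)$; once the scalar case is set up with the correct conventions, the tensor-product identification on elementary functions and the $L^2$-density step are essentially formal extensions of the Itô-type isometric construction.
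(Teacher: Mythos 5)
The paper offers no proof of this proposition at all: it simply cites Taqqu (1979, Lemma~6.1 and Remark~6.2). Your argument is essentially the standard one underlying that citation (Hermitian symmetry of $\mathcal{F}A$, Plancherel, verification on an elementary class, extension by the two isometries), and most of it is sound: the computation $\overline{\mathcal{F}A(\lambda)}=\mathcal{F}A(-\lambda)$ is correct, the $(2\pi)^{-k}$ is exactly the unitary normalization for the $2$-dimensional Fourier transform taken $k$ times (so with Lebesgue control measure for $\widehat{W}$ nothing needs to be ``absorbed''), and the joint law of $\big(W(B_1),\dots,W(B_k)\big)$ does match that of $\big(I_1^{\widehat{W}}(\mathcal{F}\mathbf{1}_{B_1}),\dots,I_1^{\widehat{W}}(\mathcal{F}\mathbf{1}_{B_k})\big)$ because both covariance matrices equal $\big(|B_i\cap B_j|\big)_{i,j}$ by Plancherel.

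The genuine gap is the step where you pass from the product $\prod_{j}I_1^{\widehat{W}}(\mathcal{F}\mathbf{1}_{B_j})$ to the multiple integral $I_k^{\widehat{W}}(\mathcal{F}A)$ by invoking ``independence of $\widehat{W}$ on disjoint symmetric sets.'' That property concerns disjoint subsets of the \emph{frequency} domain, whereas the functions $\mathcal{F}\mathbf{1}_{B_j}$ are supported on all of $\mathbb{R}^2$ and their supports all overlap, so property~4 of Subsection~\ref{sub:5.1} says nothing here; the off-diagonal multiple integral of a tensor product is \emph{not} in general the product of the single integrals. What is actually needed is the multiplication (diagram) formula for Wiener--It\^{o} integrals with respect to the Hermitian measure: $\prod_j I_1^{\widehat{W}}(g_j)$ equals $I_k^{\widehat{W}}(g_1\otimes\cdots\otimes g_k)$ plus lower-order terms weighted by the contractions $\langle g_i,g_j\rangle$, and these contractions vanish precisely because $\langle\mathcal{F}\mathbf{1}_{B_i},\mathcal{F}\mathbf{1}_{B_j}\rangle=|B_i\cap B_j|=0$ for $i\neq j$ --- i.e.\ the relevant disjointness is that of the $B_j$ in the time domain, transported by Plancherel. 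Either supply that multiplication formula, or replace the whole elementary-class computation by the cleaner second-quantization argument: $f\mapsto\mathcal{F}f$ is a real-Hilbert-space isometry from $L^2(\mathbb{R}^2;\mathbb{R})$ onto the Hermitian-even subspace identifying the first chaoses of $W$ and $\widehat{W}$, and any such identification extends canonically to all chaoses, carrying $I_k^{W}(A)$ to $I_k^{\widehat{W}}$ of the $k$-fold tensor power of the transform, which is exactly $\mathcal{F}A$. With either repair the proof is complete.
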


\subsection{The case of the two-parameter tempered Hermite fields}
\label{sub:5.3}
\begin{proposition}
Let $H_1,\,H_2>\frac{1}{2}$ and $\lambda_1,\,\lambda_2>0$. The two-parameter tempered Hermite random field given by \eqref{tHs} has the following spectral domain representation
\begin{align*}
  &Z^{k,\,H_1,H_2}_{\lambda_1,\lambda_2}(s,t) = C_{H_1,H_2,k}\int_{(\mathbb{R}^2)^k}^{\prime\prime} -\frac{(e^{it\sum_{j=1}^k\xi_j}-1)(e^{is\sum_{j=1}^k\omega_j}-1)}{\sum_{j=1}^k \xi_j \sum_{j=1}^k\omega_j}\nonumber\\
   &\quad \times \prod_{j=1}^k(\lambda_1+i\xi_j)^{-(\frac{1}{2}-\frac{1-H_1}{k})}(\lambda_2+i\omega_j)^{-(\frac{1}{2}-\frac{1-H_2}{k})}\,
   \widehat{W}(d\xi_1d\omega_1)\ldots \widehat{W}(d\xi_kd\omega_k), \nonumber
\end{align*}
where $\widehat{W}(.)$ is a complex-valued Gaussian random measure on $\big(\mathbb{R}^2, \mathcal{B}(\mathbb{R}^2)\big)$, and $$C_{H_1,H_2,k}=  \Bigg[\frac{\Gamma\Big(\frac{1}{2}-\frac{1-H_1}{k}\Big)\Gamma\Big(\frac{1}{2}-\frac{1-H_2}{k}\Big)}{2\pi}\Bigg]^k$$
is a constant depending on $H_1,\,H_2$, and $k$. The double prime $^{\prime\prime}$ on the integral indicates that one does not integrate on the hyperplanes $(\xi_{j_1},\omega_{j_1})=(\xi_{j_2},\omega_{j_2})$, $j_1\neq j_2$.
\end{proposition}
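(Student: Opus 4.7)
The plan is to apply Proposition \ref{amam} directly: compute the Fourier transform of the time-domain kernel $h_{s,t}^{H_1,H_2,\lambda_1,\lambda_2}$ and verify it coincides with the kernel appearing in the claimed spectral representation.

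First I would note that the kernel
\[
h_{s,t}((x_1,y_1),\dots,(x_k,y_k)) \;=\; \int_0^t\int_0^s \prod_{j=1}^k f_1(a-x_j)\,f_2(b-y_j)\,da\,db,
\]
with $f_1(u)=u_+^{-(\frac12+\frac{1-H_1}{k})}e^{-\lambda_1 u_+}$ and $f_2(v)=v_+^{-(\frac12+\frac{1-H_2}{k})}e^{-\lambda_2 v_+}$, is real-valued, symmetric in its $k$ arguments, and, by Lemma \ref{lemmma}, lies in $L^2((\R^2)^k)$. These are exactly the hypotheses of Proposition \ref{amam}.

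Next I would compute $\mathcal{F}[h_{s,t}]$ by interchanging the $(a,b)$-integral with the Fourier integral (justified by Fubini, since after absolute values the inner integrals are finite). Because the integrand factorises as a product over $j$ and between the $x$-block and the $y$-block, the Fourier transform reduces to $2k$ one-dimensional integrals of the form
\[
\int_\R e^{i\xi_j x_j} f_1(a-x_j)\,dx_j \;=\; e^{i\xi_j a}\int_0^\infty u^{-(\frac12+\frac{1-H_1}{k})} e^{-(\lambda_1+i\xi_j)u}\,du,
\]
after the substitution $u=a-x_j$. The remaining integral is a standard Gamma transform, equal to $\Gamma(\tfrac12-\tfrac{1-H_1}{k})(\lambda_1+i\xi_j)^{-(\frac12-\frac{1-H_1}{k})}$; the analogous formula with $(H_2,\lambda_2,\omega_j)$ handles the $y$-block. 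After collecting constants I would be left with
\[
\mathcal{F}[h_{s,t}] \;=\; C_{H_1,H_2,k}\,\prod_{j=1}^k (\lambda_1+i\xi_j)^{-(\frac12-\frac{1-H_1}{k})}(\lambda_2+i\omega_j)^{-(\frac12-\frac{1-H_2}{k})}\int_0^t e^{ia\sum_j\xi_j}da\int_0^s e^{ib\sum_j\omega_j}db.
\]
Evaluating the last two elementary integrals yields $\frac{e^{it\sum\xi_j}-1}{i\sum\xi_j}\cdot\frac{e^{is\sum\omega_j}-1}{i\sum\omega_j}$, whose product of $i$'s in the denominator produces the overall minus sign, matching the stated kernel exactly. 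Proposition \ref{amam} then gives the desired equality in distribution.

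The only delicate step is justifying the branch/convergence of the Gamma integral $\int_0^\infty u^{\alpha-1}e^{-(\lambda+i\xi)u}\,du = \Gamma(\alpha)(\lambda+i\xi)^{-\alpha}$. The exponent $\alpha=\tfrac12-\tfrac{1-H_1}{k}$ lies in $(0,1)$ for $H_1\in(\tfrac12,1)$ and must be handled carefully when $H_1>1$; positivity of $\lambda_1$ keeps $\mathrm{Re}(\lambda_1+i\xi)>0$, so the integral converges absolutely, the principal branch of the fractional power is well defined, and the formula is valid. I would also briefly check Hermitian symmetry of the spectral kernel (so that the resulting multiple integral with respect to $\widehat{W}$ is real-valued), which follows from $\overline{(\lambda+i\xi)^{-\alpha}}=(\lambda-i\xi)^{-\alpha}$ combined with the evenness of the oscillatory prefactor under $(\xi_j,\omega_j)\mapsto(-\xi_j,-\omega_j)$.
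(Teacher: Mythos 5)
Your proposal is correct and follows essentially the same route as the paper's proof: factor the kernel, compute its Fourier transform via the substitution $u=a-x_j$ and the Gamma integral $\int_0^\infty u^{\alpha-1}e^{-(\lambda+i\xi)u}\,du=\Gamma(\alpha)(\lambda+i\xi)^{-\alpha}$, evaluate the elementary $(a,b)$-integrals, and invoke Proposition \ref{amam}. Your added remarks on the convergence/branch of the fractional power (note that $\alpha=\tfrac12-\tfrac{1-H_1}{k}>0$ automatically for all $H_1>\tfrac12$ and $k\geq 1$) and on the Hermitian symmetry of the spectral kernel are sound details that the paper leaves implicit.
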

\begin{proof}
Let $h_{s,t}^{H_1,H_2,\lambda_1,\lambda_2}: (\mathbb{R}^2)^k \to \R $ the function defined in Lemma \ref{lemmma}:
\begin{align*}
&\quad h_{s,t}^{H_1,H_2,\lambda_1,\lambda_2}((x_1,y_1),\dots,(x_k,y_k))\\
 &= \int_0^t\int_0^s\prod_{j=1}^k (a-x_j)_+^{-(\frac{1}{2}+\frac{1-H_1}{k})}e^{-\lambda_1(a-x_j)_+}(b-y_j)_+^{-(\frac{1}{2}+\frac{1-H_2}{k})}e^{-\lambda_2(b-y_j)_+} \,da\,db.
\end{align*}
Let us first compute its Fourier transform.
{\small\begin{align*}
 & \mathcal{F}[ h_{s,t}^{H_1,H_2,\lambda_1,\lambda_2}]((\xi_1,\omega_1),\dots,(\xi_k,\omega_k)) \nonumber \\
 &\quad = \frac{1}{(2\pi)^k}\int_{(\mathbb{R}^2)^k}  e^{i\sum_{j=1}^k \xi_jx_j}e^{i\sum_{j=1}^k \omega_jy_j} h_{s,t}^{H_1,H_2,\lambda_1,\lambda_2}((x_1,y_1),\ldots,(x_k,y_k)) \,dx_1dy_1\ldots dx_kdy_k\\
 &\quad = \frac{1}{(2\pi)^k}\int_{(\mathbb{R}^2)^k} \,dx_1dy_1\ldots dx_kdy_k e^{i\sum_{j=1}^k \xi_jx_j}e^{i\sum_{j=1}^k \omega_jy_j}\\
 &\qquad \times \int_0^t\int_0^s\prod_{j=1}^k (a-x_j)_+^{-(\frac{1}{2}+\frac{1-H_1}{k})}e^{-\lambda_1(a-x_j)_+}(b-y_j)_+^{-(\frac{1}{2}+\frac{1-H_2}{k})}e^{-\lambda_2(b-y_j)_+} \,da\,db\\
 &\quad = \frac{1}{(2\pi)^k} \int_0^t\int_0^s \Big[\int_{\R^k}e^{i\sum_{j=1}^k \xi_jx_j}\prod_{j=1}^k (a-x_j)_+^{-(\frac{1}{2}+\frac{1-H_1}{k})}e^{-\lambda_1(a-x_j)_+}\,dx_1\ldots dx_k\Big]\, da\,db\\
 &\quad = \frac{1}{(2\pi)^k} \int_0^t\int_0^s \Big[(-1)^k\int_{\R^k}e^{i\sum_{j=1}^k \xi_j(a-X_j)}\prod_{j=1}^k (X_j)_+^{-(\frac{1}{2}+\frac{1-H_1}{k})}e^{-\lambda_1(X_j)_+}\,dX_1\ldots dX_k\Big]\\
 &\qquad \times \Big[(-1)^k\int_{\R^k}e^{i\sum_{j=1}^k \omega_j(b-Y_j)}\prod_{j=1}^k (Y_j)_+^{-(\frac{1}{2}+\frac{1-H_2}{k})}e^{-\lambda_2(Y_j)_+}\,dY_1\ldots dY_k\Big]\, da\,db.
  \end{align*}}
  Then,
\begin{align*}
 & \mathcal{F}[ h_{s,t}^{H_1,H_2,\lambda_1,\lambda_2}]((\xi_1,\omega_1),\dots,(\xi_k,\omega_k)) \nonumber \\
 &\quad = \frac{1}{(2\pi)^k} \int_0^t e^{i\sum_{j=1}^k \xi_ja}da \int_0^s e^{i\sum_{j=1}^k \omega_jb}db\\
 &\qquad \times  \Big[\int_{\R^k}\prod_{j=1}^k (X_j)_+^{-(\frac{1}{2}+\frac{1-H_1}{k})}e^{-(\lambda_1+i\xi_j)(X_j)_+}\,dX_1\ldots dX_k\Big]\\
 &\qquad \times \Big[\int_{\R^k}\prod_{j=1}^k (Y_j)_+^{-(\frac{1}{2}+\frac{1-H_2}{k})}e^{-(\lambda_2+i\omega_j)(Y_j)_+}\,dY_1\ldots dY_k\Big]\\
 & =\frac{-1}{(2\pi)^k}\frac{(e^{it\sum_{j=1}^k\xi_j}-1)(e^{is\sum_{j=1}^k\omega_j}-1)}{\sum_{j=1}^k \xi_j \sum_{j=1}^k\omega_j}\Gamma\big(\frac{1}{2}-\frac{1-H_1}{k}\big)^k\prod_{j=1}^k(\lambda_1+i\xi_j)^{-(\frac{1}{2}-\frac{1-H_1}{k})}\\
&\quad\times \Gamma\big(\frac{1}{2}-\frac{1-H_2}{k}\big)^k\prod_{j=1}^k(\lambda_2+i\omega_j)^{-(\frac{1}{2}-\frac{1-H_2}{k})}.
\end{align*}
Let $\widehat{W}(.)$ be a complex-valued Gaussian random measure on $\big(\mathbb{R}^2, \mathcal{B}(\mathbb{R}^2)\big)$. Using Proposition \ref{amam}, we get
\begin{align*}
  &Z^{k,\,H_1,H_2}_{\lambda_1,\lambda_2}(s,t) \overset{(d)}{=} C_{H_1,H_2,k}\int_{(\mathbb{R}^2)^k}^{\prime\prime} -\frac{(e^{it\sum_{j=1}^k\xi_j}-1)(e^{is\sum_{j=1}^k\omega_j}-1)}{\sum_{j=1}^k \xi_j \sum_{j=1}^k\omega_j}\nonumber\\
   & \hspace{3em}\times \prod_{j=1}^k(\lambda_1+i\xi_j)^{-(\frac{1}{2}-\frac{1-H_1}{k})}(\lambda_2+i\omega_j)^{-(\frac{1}{2}-\frac{1-H_2}{k})}\,
   \widehat{W}(d\xi_1d\omega_1)\ldots \widehat{W}(d\xi_kd\omega_k). \nonumber
\end{align*}
\end{proof}
\begin{remark}
In \cite[Remark 2]{HS3}, the authors said that it will be interesting to find the spectral domain representation of the Hermite field \eqref{H}. So, taking $\lambda_1=\lambda_2=0$ and using the previous results, one can write
\begin{eqnarray}
  Z^{k,\,H_1,H_2}(s,t) &\overset{(d)}{=}& C_{H_1,H_2,k}\int_{(\mathbb{R}^2)^k}^{\prime\prime} -\frac{(e^{it\sum_{j=1}^k\xi_j}-1)(e^{is\sum_{j=1}^k\omega_j}-1)}{\sum_{j=1}^k \xi_j \sum_{j=1}^k\omega_j}\nonumber\\
   && \times \prod_{j=1}^k(i\xi_j)^{-(\frac{1}{2}-\frac{1-H_1}{k})}(i\omega_j)^{-(\frac{1}{2}-\frac{1-H_2}{k})}\,
   \widehat{W}(d\xi_1d\omega_1)\ldots \widehat{W}(d\xi_kd\omega_k). \nonumber
\end{eqnarray}
where $H_1,\,H_2\in (\frac{1}{2},1)$.
\end{remark}


\section{Two-parameter tempered fractional calculus}
\label{sect5}
\subsection{Two-parameter tempered fractional integrals}
In this subsection, we will give the definitions of multiple tempered fractional integrals and derive their properties.
\begin{definition}[Two-parameter tempered fractional integrals]
Let $\alpha_1,\,\alpha_d,\,\lambda_1,\,\lambda_2>0$. We denote $\alpha=(\alpha_1,\alpha_2)$ and $\lambda=(\lambda_1,\lambda_2)$. Let $f$ be a function belonging to  $L^p(\mathbb{R}^2)$ (where $1\leq p<\infty$). The left and
the right two-parameter tempered fractional integrals of order $\alpha$ are, respectively, defined as
\begin{align*}
\mathbb{I}^{\alpha,\,\lambda}_+(f(t,s)) &= \frac{1}{\Gamma(\alpha_1)\Gamma(\alpha_2)}\int_{\R^2}e^{-\lambda_1(t-u)_+}(t-u)_+^{\alpha_1-1}\\
\nonumber&\quad\times e^{-\lambda_2(s-v)_+}(s-v)_+^{\alpha_2-1}f(u,v)\,dudv
\intertext{and}
\mathbb{I}^{\alpha,\,\lambda}_-(f(t,s)) &= \frac{1}{\Gamma(\alpha_1)\Gamma(\alpha_2)}\int_{\R^2}e^{\lambda_1(u-t)_+}(u-t)_+^{\alpha_1-1}\\
\nonumber&\quad\times e^{\lambda_2(v-s)_+}(v-s)_+^{\alpha_2-1}f(u,v)\,dudv
\end{align*}
where $\displaystyle\Gamma(\alpha_i)=\int_0^{+\infty}e^{-x}x^{\alpha_i-1}\,dx$ is the Euler gamma function, and $(x)_+ = xI(x > 0)$.
\end{definition}
When $\lambda_1,\,\lambda_2=0$, these definitions reduce to the (positive and negative) multiple Riemann-Liouville fractional integrals, which extend the usual operations of multiple iterated integration to a multiple fractional order.
The following results gather some basic properties of fractional integrals
\begin{proposition}
For any $\alpha_1,\,\alpha_2>0$, $\lambda_1,\,\lambda_2>0$, and $p\geq 1\in\mathbb{N}$, the multiple parameters tempered fractional integrals $ \mathbb{I}^{\alpha,\,\lambda}_+$, and $\mathbb{I}^{\alpha,\,\lambda}_-$ have the following
properties:
\begin{enumerate}[(i)]
\item{Reflection property:} If $Q$ is the reflection operator defined by $(Q f)(u,v) = f(-u,-v)$, then
$$Q \mathbb{I}^{\alpha,\,\lambda}_\pm f= \mathbb{I}^{\alpha,\,\lambda}_\pm Qf.$$
\item{Semigroup property:} For $f\in L^1(\R^2)$ we have
$$\mathbb{I}^{\alpha,\,\lambda}_\pm \mathbb{I}^{\beta,\,\lambda}_\pm f\,=\, \mathbb{I}^{\alpha+\beta,\,\lambda}_\pm f,\quad \alpha=(\alpha_1,\alpha_2),\,\beta=(\beta_1,\beta_2)>(0,0).$$
\item{Two-parameter tempered fractional integration by parts formula:} Suppose $f,\,g\in L^2(\R^2)$. Then
$$\int_{\R^2}f(x,y)\mathbb{I}^{\alpha,\,\lambda}_+g(x,y)\,dxdy\,=\, \int_{\R^2}\mathbb{I}^{\alpha,\,\lambda}_-f(x,y)g(x,y)\,dxdy.$$
\end{enumerate}
\end{proposition}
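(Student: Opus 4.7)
The plan is to prove the three parts independently: each one follows from a change of variables or Fubini's theorem, and because the kernels factorize as products in the two coordinates each argument reduces to a one-dimensional computation performed in each variable separately.

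For (i), the approach is a direct substitution $u \mapsto -u$, $v \mapsto -v$ in the integral representation of $\mathbb{I}^{\alpha,\lambda}_+ f$ evaluated at $(-t,-s)$. The transformations $(-t-u)_+ = (u'-t)_+$ and $(-s-v)_+ = (v'-s)_+$ convert the kernel of $\mathbb{I}_+$ into the kernel of $\mathbb{I}_-$, while $f(u,v)$ becomes $(Qf)(u',v')$; comparing with the definition of $\mathbb{I}^{\alpha,\lambda}_-$ then yields the identity.

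For (ii) the strategy is to apply Fubini and reduce to a one-dimensional identity. Writing out $\mathbb{I}^{\alpha,\lambda}_+(\mathbb{I}^{\beta,\lambda}_+ f)(t,s)$ and interchanging the order of integration, the $t$-coordinate computation isolates an inner integral of the shape
\begin{equation*}
\int_u^t e^{-\lambda_1(t-w)}(t-w)^{\alpha_1-1}\, e^{-\lambda_1(w-u)}(w-u)^{\beta_1-1}\,dw.
\end{equation*}
The two exponentials combine to $e^{-\lambda_1(t-u)}$, which does not depend on $w$ and pulls out; the remaining polynomial integral is the classical Beta integral, producing $\frac{\Gamma(\alpha_1)\Gamma(\beta_1)}{\Gamma(\alpha_1+\beta_1)}(t-u)^{\alpha_1+\beta_1-1}$. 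Performing the analogous computation in the $s$-variable and collecting the gamma constants yields exactly $\mathbb{I}^{\alpha+\beta,\lambda}_+ f$.

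For (iii) the approach is again Fubini: one expands $\int_{\R^2} f(x,y)\,\mathbb{I}^{\alpha,\lambda}_+ g(x,y)\,dx\,dy$ as a quadruple integral, swaps the order of integration, and recognises the resulting inner integral over $(x,y)$ as $\mathbb{I}^{\alpha,\lambda}_- f(u,v)$. The main obstacle in all three parts is the justification of the manipulation: one must verify absolute integrability so that Fubini and the change of variables are legitimate. For (iii) this is handled by Cauchy--Schwarz applied to $f,g \in L^2(\R^2)$ together with the observation that the tempering factor forces the kernel $e^{-\lambda|u|}|u|^{\alpha-1}$ to belong to $L^1(\R)$ for every $\alpha,\lambda>0$, which in turn implies (via Young's convolution inequality) that $\mathbb{I}^{\alpha,\lambda}_{\pm}$ is a bounded operator on $L^2(\R^2)$; the integrability needed for (ii) is handled analogously using the hypothesis $f \in L^1(\R^2)$.
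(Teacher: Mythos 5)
Your proposal is correct and follows essentially the same route as the paper: part (ii) via Fubini plus the Beta integral (the paper's substitution $u=x+(t-x)\omega_1$ is exactly the normalization of your Beta integral, and the combination of the two tempering exponentials into $e^{-\lambda_1(t-u)}$ is the same key observation), and part (iii) via Fubini; you are in fact more careful than the paper in justifying the interchange of integrals through Young's inequality and Cauchy--Schwarz, which the paper leaves implicit.

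One point deserves attention in part (i). Your computation correctly shows that the substitution $u\mapsto -u$, $v\mapsto -v$ turns the kernel of $\mathbb{I}^{\alpha,\lambda}_+$ into that of $\mathbb{I}^{\alpha,\lambda}_-$, so what you actually prove is $Q\,\mathbb{I}^{\alpha,\lambda}_{\pm}f=\mathbb{I}^{\alpha,\lambda}_{\mp}Qf$, with the signs swapped, rather than the identity $Q\,\mathbb{I}^{\alpha,\lambda}_{\pm}f=\mathbb{I}^{\alpha,\lambda}_{\pm}Qf$ as literally stated. The swapped version is the standard reflection property for (tempered) fractional integrals, and the statement in the proposition appears to carry a sign typo; the paper's own one-line argument for (i), which simply rewrites $\mathbb{I}^{\alpha,\lambda}_{\pm}f(-u,-v)$ as $\mathbb{I}^{\alpha,\lambda}_{\pm}Qf(u,v)$, conflates evaluating at the reflected point with applying the operator to the reflected function and does not actually establish either identity. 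Your version is the defensible one; just state explicitly that the conclusion is $Q\,\mathbb{I}^{\alpha,\lambda}_{\pm}=\mathbb{I}^{\alpha,\lambda}_{\mp}Q$ so that the claim and the proof match.
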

\begin{proof}
The property $(i)$ is elementary. In fact,
\begin{eqnarray*}
  Q \mathbb{I}^{\alpha,\,\lambda}_\pm f(u,v) &=& \mathbb{I}^{\alpha,\,\lambda}_\pm f(-u,-v) \\
   &=&  \mathbb{I}^{\alpha,\,\lambda}_\pm Qf(u,v).
\end{eqnarray*}
The proof of $(ii)$ is direct.
\begin{align*}
&\big(\mathbb{I}^{\alpha,\,\lambda}_+ \mathbb{I}^{\beta,\,\lambda}_+\big) f(t,s)\\
&=\frac{1}{\Gamma(\alpha_1)\Gamma(\alpha_2)\Gamma(\beta_1)\Gamma(\beta_2)}\int_{\R^2}
e^{-\lambda_1(t-u)_+}(t-u)_+^{\alpha_1-1}e^{-\lambda_2(s-v)_+}(s-v)_+^{\alpha_2-1}\\
&\quad\times \int_{\R^2}e^{-\lambda_1(u-x)_+}(u-x)_+^{\beta_1-1}e^{-\lambda_2(v-y)_+}(v-y)_+^{\beta_2-1}f(x,y)\,dxdy\,dudv.
\end{align*}
By changing the order of integration using Fubini's theorem and making the change of variables $u=x+(t-x)\omega_1$ and $v=y+(s-y)\omega_2$:
{\footnotesize\begin{align*}
&\big(\mathbb{I}^{\alpha,\,\lambda}_+ \mathbb{I}^{\beta,\,\lambda}_+\big) f(t,s)\\
&=\frac{1}{\Gamma(\alpha_1)\Gamma(\alpha_2)\Gamma(\beta_1)\Gamma(\beta_2)}\int_{\R^2}f(x,y)\Bigg[\int_{\R^2}
e^{-\lambda_1(t-u)_+}(t-u)_+^{\alpha_1-1}e^{-\lambda_2(s-v)_+}(s-v)_+^{\alpha_2-1}\\
&\quad\times e^{-\lambda_1(u-x)_+}(u-x)_+^{\beta_1-1}e^{-\lambda_2(v-y)_+}(v-y)_+^{\beta_2-1}\,dudv\Bigg]\,dxdy\\
&=\frac{B(\alpha_1,\beta_1)B(\alpha_2,\beta_2)}{\Gamma(\alpha_1)\Gamma(\alpha_2)\Gamma(\beta_1)\Gamma(\beta_2)}\int_{\R^2}
f(x,y)e^{-\lambda_1(t-x)_+}(t-x)_+^{\alpha_1+\beta_1-1}e^{-\lambda_2(s-y)_+}(s-y)_+^{\alpha_2+\beta_2-1}\,dxdy\\
&=\mathbb{I}^{\alpha+\beta,\,\lambda}_+ f (t,s).
\end{align*}}
Next, we prove similarly that $\mathbb{I}^{\alpha,\,\lambda}_- \mathbb{I}^{\beta,\,\lambda}_- f\,=\, \mathbb{I}^{\alpha+\beta,\,\lambda}_- f$.\\
The property $(iii)$ would follow immediately if one could change the order of integration in:
{\footnotesize
\begin{align*}
&\int_{\R^2}f(x,y)\mathbb{I}^{\alpha,\,\lambda}_+g(x,y)\,dxdy\\
&\quad =\int_{\R^2}f(x,y) \frac{1}{\Gamma(\alpha_1)\Gamma(\alpha_2)}\int_{\R^2}e^{-\lambda_1(x-u)_+}(x-u)_+^{\alpha_1-1}e^{-\lambda_2(y-v)_+}(y-v)_+^{\alpha_2-1}g(u,v)\,dudvdxdy\\
&\quad =\int_{\R^2}f(x,y) \frac{1}{\Gamma(\alpha_1)\Gamma(\alpha_2)}\int_{-\infty}^x\int_{-\infty}^y e^{-\lambda_1(x-u)}(x-u)^{\alpha_1-1}e^{-\lambda_2(y-v)}(y-v)^{\alpha_2-1}g(u,v)\,dudvdxdy\\
&\quad =\int_{\R^2}\frac{g(u,v)}{\Gamma(\alpha_1)\Gamma(\alpha_2)}\int_u^{+\infty}\int_v^{+\infty} f(x,y)e^{-\lambda_1(x-u)}(x-u)^{\alpha_1-1}e^{-\lambda_2(y-v)}(y-v)^{\alpha_2-1} dxdydudv\\
&\quad = \int_{\R^2}\mathbb{I}^{\alpha,\,\lambda}_-f(x,y)g(x,y)\,dx\,dy
\end{align*}}
and this completes the proof.
\end{proof}
Now, we will derive other properties of the two-parameter tempered fractional integrals that will be needed in the rest of this paper.
\begin{lemma}\label{vvv1}
For any $\alpha=(\alpha_1,\alpha_2)>(0,0)$, $\lambda=(\lambda_1,\lambda_2)>(0,0)$, and $1\leq p<\infty$, $ \mathbb{I}^{\alpha,\,\lambda}_\pm$ is a bounded linear operator on $L^p(\R^2)$ such that
\begin{equation}\label{Yo}
  \big\|\mathbb{I}^{\alpha,\,\lambda}_\pm f\big\|_p \leq  \lambda_1^{-\alpha_1}\lambda_2^{-\alpha_2} \big\|f\big\|_p
\end{equation}
for all $f\in L^p(\R^d)$.
\end{lemma}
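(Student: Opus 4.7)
The plan is to recognize $\mathbb{I}^{\alpha,\lambda}_+$ as a convolution with an integrable kernel, then invoke Young's convolution inequality. Indeed, setting
\[
K^{\alpha,\lambda}(x,y) \;=\; \frac{1}{\Gamma(\alpha_1)\Gamma(\alpha_2)}\, e^{-\lambda_1 x_+} x_+^{\alpha_1 - 1}\, e^{-\lambda_2 y_+} y_+^{\alpha_2 - 1},
\]
the definition of $\mathbb{I}^{\alpha,\lambda}_+$ can be rewritten as $\mathbb{I}^{\alpha,\lambda}_+ f = K^{\alpha,\lambda} * f$. This is the key reformulation.

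The second step is to compute $\|K^{\alpha,\lambda}\|_1$. Since the kernel factorizes in the two coordinates and is supported on $\mathbb{R}_+^2$, Fubini gives
\[
\|K^{\alpha,\lambda}\|_1 \;=\; \frac{1}{\Gamma(\alpha_1)\Gamma(\alpha_2)} \left(\int_0^\infty e^{-\lambda_1 x} x^{\alpha_1-1}\,dx\right)\left(\int_0^\infty e^{-\lambda_2 y} y^{\alpha_2-1}\,dy\right).
\]
The substitutions $u = \lambda_1 x$ and $v = \lambda_2 y$ turn each factor into $\lambda_i^{-\alpha_i} \Gamma(\alpha_i)$, so $\|K^{\alpha,\lambda}\|_1 = \lambda_1^{-\alpha_1}\lambda_2^{-\alpha_2}$. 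In particular $K^{\alpha,\lambda} \in L^1(\mathbb{R}^2)$, which is what makes Young applicable.

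The third step applies Young's convolution inequality in the form $\|K * f\|_p \leq \|K\|_1 \|f\|_p$ (valid for $1 \leq p \leq \infty$ and $K \in L^1$), yielding
\[
\big\|\mathbb{I}^{\alpha,\lambda}_+ f\big\|_p \;\leq\; \lambda_1^{-\alpha_1}\lambda_2^{-\alpha_2}\,\|f\|_p.
\]
For the right-sided operator $\mathbb{I}^{\alpha,\lambda}_-$, the reflection property $Q\,\mathbb{I}^{\alpha,\lambda}_\pm = \mathbb{I}^{\alpha,\lambda}_\pm Q$ established earlier, together with the isometry of $Q$ on $L^p(\mathbb{R}^2)$, reduces the bound for $\mathbb{I}^{\alpha,\lambda}_-$ to that for $\mathbb{I}^{\alpha,\lambda}_+$. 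Linearity of both operators is immediate from the definition, so boundedness completes the proof.

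I do not foresee a genuine obstacle: the only point requiring care is to confirm that the convolution is well defined pointwise a.e.\ (this follows from the standard proof of Young's inequality via Minkowski's integral inequality, or by first restricting to $f \in C_c(\mathbb{R}^2)$ and extending by density). The cleanest presentation is the three-line argument above, with the gamma-function computation of $\|K^{\alpha,\lambda}\|_1$ made explicit.
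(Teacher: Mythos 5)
Your proof is correct and follows essentially the same route as the paper: both identify $\mathbb{I}^{\alpha,\lambda}_{\pm}f$ as a convolution $f*\phi^{\pm}_{\alpha}$ with an explicit kernel, compute $\|\phi^{\pm}_{\alpha}\|_{1}=\lambda_1^{-\alpha_1}\lambda_2^{-\alpha_2}$ via the Gamma integral, and conclude by Young's convolution inequality. The only cosmetic difference is that the paper writes out the kernel $\phi^{-}_{\alpha}$ explicitly rather than reducing the minus case to the plus case through the reflection operator $Q$, which is an equally valid shortcut.
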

\begin{proof} Before giving the proof of our lemma, we recall the following Young's convolution result (see, e.g, \cite[Theorem 20.18]{Hewitt}): Let $f,\,g:\R^2\to\R$. The convolution of $f$ and $g$ at $(x,y)$ is
$$(f*g)(x,y)\,=\, \int_{\R^2} f((x-x_1,y-y_1)g(x_1,y_1) \,dx_1dy_1$$
provided the integral is defined. \\
Let $p,\,q,\, r\in[1,\infty]$ satisfy
$$\frac{1}{p}+\frac{1}{q}-\frac{1}{r}=1,$$
with the convention $1/\infty=0$. Assume that $f\in L^p(\R^2)$, $g\in L^q(\R^2)$. Then
\begin{enumerate}
  \item The function $(x_1,y_1) \mapsto f(x-x_1,y-y_1)g(x_1,y_1)$ belongs to $L^1(\R^2)$ for almost all $(x,y)$.
  \item The function $(x,y)\mapsto(f*g)(x,y)$ belongs to $L^r(\R^2)$.
  \item There exists a constant $c=c_{p,q} \leq 1$, depending on $p$ and $q$ but not on $f$ or $g$, such that
  \begin{equation}\label{Youn}\|f*g\|_r\leq c\cdot\|f\|_p\cdot \|g\|_q.\end{equation}
\end{enumerate}
We return to the proof of our lemma. Obviously $\mathbb{I}^{\alpha,\,\lambda}_\pm$ is linear, and $\mathbb{I}^{\alpha,\,\lambda}_\pm f(s,t)= (f\ast\phi^\pm_{\alpha})(s,t)$ where
\begin{equation}\label{fo}\phi^+_{\alpha}(s,t)= \frac{1}{\Gamma(\alpha_1)\Gamma(\alpha_1)} s^{\alpha_1-1}t^{\alpha_2-1}e^{-(\lambda_1s +\lambda_2 t)} \mathbf{1}_{\{(0,\,\infty)^2\}}(s,t)\end{equation}
and
$$\phi^-_{\alpha}(s,t)=  \frac{1}{\Gamma(\alpha_1)\Gamma(\alpha_1)} (-s)^{\alpha_1-1}(-t)^{\alpha_2-1}e^{\lambda_1s+\lambda_2t} \mathbf{1}_{\{(-\infty,\,0)^2\}}(s,t)$$
for any $\alpha=(\alpha_1,\alpha_2), \,\lambda=(\lambda_1,\lambda_2)> (0,0)$. But
\begin{eqnarray*}
  \|\phi^\pm_{\alpha}\|_1 &=& \frac{1}{\Gamma(\alpha_1)\Gamma(\alpha_1)} \int_0^{+\infty}\int_0^{+\infty} s^{\alpha_1-1}e^{\lambda_1 s}t^{\alpha_2-1}e^{\lambda_2 t}\,dsdt \\
   &=& \frac{1}{\Gamma(\alpha_1)\Gamma(\alpha_1)} \lambda_1^{-\alpha_1} \Gamma(\alpha_1) \lambda_2^{-\alpha_2} \Gamma(\alpha_2)\\
   &=& \lambda_1^{-\alpha_1}\lambda_2^{-\alpha_2}
\end{eqnarray*}
Then, \eqref{Yo} follows from Young's convolution inequality \eqref{Youn}.
\end{proof}
Next, we discuss the relationship between tempered fractional integrals and Fourier transforms.
Recall that the Fourier transform of $f:\R^2\to \R$ is the function $\mathcal{F}[f](s,t)$ defined by
\begin{eqnarray*}
 \displaystyle \mathcal{F}[f](s,t) &=& \frac{1}{2\pi}\int_{\R^2} e^{i(s\xi_1+t\xi_2)} f(\xi_1,\xi_2)\,d\xi_1d\xi_2
\end{eqnarray*}
\begin{lemma}\label{kes}
For any  $\alpha=(\alpha_1,\alpha_2), \,\lambda=(\lambda_1,\lambda_2)> (0,0)$ we have
$$\mathcal{F}\big[\mathbb{I}^{\alpha,\,\lambda}_\pm f\big] (x,y)=\mathcal{F}[f](x,y)(\lambda_1\pm ix)^{-\alpha_1}(\lambda_2\pm iy)^{-\alpha_2}$$
for all $f\in L^1(\R^2)$ and all $f\in L^2(\R^2)$.
\end{lemma}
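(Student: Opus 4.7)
The plan is to use the convolution representation $\mathbb{I}^{\alpha,\lambda}_\pm f = f\ast\phi^\pm_\alpha$ established inside the proof of Lemma \ref{vvv1}, with the explicit kernels $\phi^\pm_\alpha$ given in \eqref{fo}. The argument then reduces to applying the convolution theorem for the Fourier transform together with a direct evaluation of $\mathcal{F}[\phi^\pm_\alpha]$.

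For $f\in L^1(\R^2)$, the kernel $\phi^\pm_\alpha$ factorizes as a tensor product of a function of the first variable with a function of the second, so its Fourier transform factorizes as well, reducing matters to two independent one-dimensional integrals of the form
$$\int_0^{+\infty} u^{\alpha-1} e^{-(\lambda\mp ix)u}\,du = \Gamma(\alpha)(\lambda\mp ix)^{-\alpha},$$
which is justified by analytic continuation of the Euler gamma integral since $\mathrm{Re}(\lambda\mp ix) = \lambda > 0$. The $1/\Gamma(\alpha_1)\Gamma(\alpha_2)$ prefactor in \eqref{fo} cancels the gamma factors, and for $\phi^-_\alpha$ the change of variable $u\mapsto -u$ converts the negative-supported integral into the positive-supported one with the sign of $ix$ and $iy$ reversed. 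Combining this with the convolution theorem (adjusted for the $\frac{1}{2\pi}$ normalization adopted in the paper) yields the stated identity on $L^1(\R^2)$.

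To extend the identity to $f\in L^2(\R^2)$, I would argue by density. By Lemma \ref{vvv1}, $\mathbb{I}^{\alpha,\lambda}_\pm$ is a bounded operator on $L^2(\R^2)$; the Fourier transform extends to a Plancherel isometry on $L^2$; and the multiplier $(\lambda_1\pm ix)^{-\alpha_1}(\lambda_2\pm iy)^{-\alpha_2}$ is uniformly bounded in modulus by $\lambda_1^{-\alpha_1}\lambda_2^{-\alpha_2}$, so the multiplication operator it defines is also bounded on $L^2$. Since $L^1\cap L^2$ is dense in $L^2$ and the identity holds there by the previous step, it extends by continuity to all of $L^2(\R^2)$.

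The main obstacle is the careful bookkeeping of sign conventions and of the $2\pi$ normalization factors: one must verify that the reflection built into $\phi^-_\alpha$ precisely converts $\mp ix$ into $\pm ix$ so that the $\pm$ in the conclusion lines up with the $\pm$ in $\mathbb{I}^{\alpha,\lambda}_\pm$, and that the leftover $2\pi$ from the convolution theorem is absorbed by the $\frac{1}{2\pi}$ in the definition of $\mathcal{F}$. Beyond this bookkeeping, no further analytical estimate is required.
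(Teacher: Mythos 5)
Your proposal follows essentially the same route as the paper: write $\mathbb{I}^{\alpha,\lambda}_\pm f = f\ast\phi^\pm_\alpha$, evaluate $\mathcal{F}[\phi^\pm_\alpha]$ by factorizing into two one-dimensional gamma-type integrals, invoke the convolution theorem with the paper's $\tfrac{1}{2\pi}$ normalization, and pass to $L^2$ by density (the paper's truncation $f\mathbf{1}_{[-n_1,n_1]\times[-n_2,n_2]}$ is just a particular choice of approximating sequence in $L^1\cap L^2$, and your added remark that the multiplier is bounded by $\lambda_1^{-\alpha_1}\lambda_2^{-\alpha_2}$ makes the limit passage explicit). The sign bookkeeping you flag is real — with the paper's $e^{+i(x\xi_1+y\xi_2)}$ convention the gamma integral for $\phi^+_\alpha$ produces $(\lambda_1-ix)^{-\alpha_1}$, so the $\pm$ alignment in the statement implicitly presupposes the opposite sign convention — but this is a discrepancy already present in the paper's own computation, not a gap in your argument.
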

\begin{proof}
The function $\phi^+_{\alpha}$ in \eqref{fo} has Fourier transform
\begin{eqnarray*}\mathcal{F}[\phi^+_{\alpha}](x,y)&=& \frac{1}{2\pi\Gamma(\alpha_1)\Gamma(\alpha_2)}\int_{\R^2}e^{i(x\xi_1+y\xi_2)}\xi_1^{\alpha_1-1}\xi_2^{\alpha_2-1}
e^{-(\lambda_1\xi_1+\lambda_2\xi_2)}\\
&&\times\mathbf{1}_{\{(0,\infty)^2\}}(\xi_1,\xi_2)\,d\xi_1d\xi_2\\
&=& \frac{1}{2\pi\Gamma(\alpha_1)\Gamma(\alpha_2)} \int_0^\infty e^{i\xi_1 x}\xi_1^{\alpha_1-1}e^{-\lambda_1\xi_1}d\xi_1 \times \int_0^\infty e^{i\xi_2 y}\xi_2^{\alpha_2-1}e^{-\lambda_2\xi_2}d\xi_2  \\
&=& \frac{1}{2\pi}(\lambda_1+ ix)^{-\alpha_1}(\lambda_2+ iy)^{-\alpha_2}.
\end{eqnarray*}
Now, we give the analog of the two-parameter convolution theorem (in $\R$ one can see, e.g., \cite[Section 15.5]{kind1} and \cite[Chapter 6]{Fouriert}).
Let $f,\,g\in L^1(\R^2)$, it is not easy to show $f\ast g\in L^1(\R^2)$ has Fourier transform  $2\pi \mathcal{F}[f](x,y)\mathcal{F}[g](x,y)$. Then, it follows that
$$\mathcal{F}\big[\mathbb{I}^{\alpha,\,\lambda}_+ f\big] (x,y)=(f\ast\phi^+_{\alpha})(x,y) =\mathcal{F}[f](x,y)(\lambda_1+ ix)^{-\alpha_1}(\lambda_2+ iy)^{-\alpha_2}.$$
Similarly, we prove that
$$\mathcal{F}\big[\mathbb{I}^{\alpha,\,\lambda}_- f\big] (x,y)=(f\ast\phi^-_{\alpha})(x,y) =\mathcal{F}[f](x,y)(\lambda_1- ix)^{-\alpha_1}(\lambda_2- iy)^{-\alpha_2}.$$
If $f \in L^2(\R^2)$, approximated by the $L^1$ function $f(x,y)\mathbf{1}_{[-n_1,n_1]\times [-n_2,n_2]}(x,y)$ and let $n_1,\,n_2\to\infty$.
\end{proof}
\subsection{Two-parameter tempered fractional derivatives}
In this subsection, we consider the inverse operators of the two-parameter tempered fractional integrals, which are called two-parameter tempered fractional derivatives. For our purposes, we only require derivatives of order $\alpha$ such that $0<\alpha_1,\,\alpha_1<1$, and this simplifies the presentation.
\begin{definition}
The positive and negative tempered fractional derivatives of a function $f:\R^2\to\R$ are defined as
{\footnotesize\begin{align}
&\quad\mathbb{D}^{\alpha,\,\lambda}_+f(s,t)\nonumber\\
&= \lambda_1^{\alpha_2}\lambda_2^{\alpha_2}f(s,t)+\frac{\alpha_1\alpha_2}{\Gamma(1-\alpha_1)\Gamma(1-\alpha_2)} \int_{-\infty}^s\int_{-\infty}^t \frac{ f(s,t)-f(u,v)}{(s-u)^{\alpha_1+1}(t-v)^{\alpha_2+1}} e^{-\lambda_1(s-u)}e^{-\lambda_2(t-v)}\,dudv
\end{align}}
and
{\footnotesize
\begin{align}
&\quad\mathbb{D}^{\alpha,\,\lambda}_-f(s,t)\nonumber\\
&= \lambda_1^{\alpha_2}\lambda_2^{\alpha_2}f(s,t)+\frac{\alpha_1\alpha_2}{\Gamma(1-\alpha_1)\Gamma(1-\alpha_2)} \int_{-\infty}^s\int_{-\infty}^t \frac{ f(s,t)-f(u,v)}{(u-s)^{\alpha_1+1}(v-t)^{\alpha_2+1}} e^{-\lambda_1(u-s)}e^{-\lambda_2(v-t)}\,dudv,
\end{align}}
respectively, for any $0 < \alpha_1,\,\alpha_2 < 1$ and any $\lambda_1,\,\lambda_2> 0$.
\end{definition}
 Two-parameter tempered fractional derivatives cannot be defined pointwise for all functions $f\in L^p(\R^2)$, since we need $|f(s,t)-f(u,v)|\to0$ fast enough to counter the singularity of the denominator $(s-u)^{\alpha_1+1}(t-v)^{\alpha_2+1}$ as $u\to s$ and $v\to t$. We can extend the definitions of the two-parameter tempered fractional derivatives to a suitable class of functions in $L^2(\R^2)$. For any $\alpha=(\alpha_1,\alpha_2) > (0,0)$ and
$\lambda=(\lambda_1,\lambda_2) > (0,0)$ we may define the fractional Sobolev space
{\small\begin{equation*}
W^{\alpha,\,2}(\R^2)\,:=\, \Bigg\{f\in L^2(\R^2)\,:\,\, \int_{\R^2}(\lambda_1^2+\omega_1^2)^{\alpha_1}(\lambda_2^2+\omega_2^2)^{\alpha_2}\Big\vert\mathcal{F}[f](\omega_1,\omega_2)\Big\vert^2\,d\omega_1d\omega_2<\infty\Bigg\},
\end{equation*}}
which is a Banach space with norm $$\|f\|_{\alpha,\,\lambda} = \|(\lambda_1^2+\omega_1^2)^{\alpha_1}(\lambda_2^2+\omega_2^2)^{\alpha_2}\Big\vert\mathcal{F}[f](\omega_1,\omega_2)\Big\vert^2\|_2.$$
The space $W^{\alpha,\,2}(\R^2)$
is the same for any $\lambda_1,\,\lambda_2> 0$ (typically we take $\lambda_1=\lambda_2=1$) and all the norms $\|f\|_{\alpha,\,\lambda}$ are
equivalent, since $(1+\omega_1^2)(1+\omega_2^2)\leq (\lambda_1^2+\omega_1^2)(\lambda_2^2+\omega_2^2)\leq \lambda_1^2\lambda_2^2(1+\omega_1^2)(1+\omega_2^2)$ for all $\lambda_1,\,\lambda_2\geq 1$, and $(\lambda_1^2+\omega_1^2)(\lambda_2^2+\omega_2^2)\leq (1+\omega_1^2)(1+\omega_2^2)\leq \lambda_1^{-2}\lambda_2^{-2}(1+\omega_1^2)(1+\omega_2^2)$ for all $0<\lambda_1,\,\lambda_2<1$.
\begin{definition}\label{dorsaf2}
The positive (resp., negative) two-parameter tempered fractional derivative $\mathbb{D}^{\alpha,\,\lambda}_\pm f(s,t)$ of a
function $f\in W^{\mathbf{\alpha},\,2}(\R^2)$ is defined as the unique element of $L^2(\R^2)$ with Fourier transform $\mathcal{F}[f](x,y)(\lambda_1 \pm ix)^{\alpha_1}(\lambda_2 \pm iy)^{\alpha_2}$ for any $\alpha_1,\,\alpha_2 > 0$ and any $\lambda_1,\,\lambda_1 >0.$
\end{definition}
\begin{lemma}\label{dorsaf}
For any $\alpha_1,\,\alpha_2 > 0$ and any $\lambda_1,\,\lambda_1 >0$, we have
\begin{equation}\label{rad}\mathbb{D}^{\alpha,\,\lambda}_\pm \mathbb{I}^{\alpha,\,\lambda}_\pm f(s,t)= f(s,t)\end{equation}
for any function $f\in L^2(\R^2)$, and
\begin{equation}\label{rad1}\mathbb{I}^{\alpha,\,\lambda}_\pm \mathbb{D}^{\alpha,\,\lambda}_\pm f(s,t)= f(s,t)\end{equation}
for any $f \in W^{\alpha,\,2}(\R^2)$.
\end{lemma}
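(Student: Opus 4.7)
The plan is to prove both identities by passing to the Fourier side, where tempered fractional integration and differentiation become multiplication by conjugate symbols, so their composition is the identity. The only technical point is to check that the relevant operators can be applied in succession, i.e.\ that the intermediate function lies in the appropriate space.

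For the first identity \eqref{rad}, I would start by fixing $f\in L^2(\R^2)$ and verifying that $\mathbb{I}^{\alpha,\lambda}_\pm f$ belongs to $W^{\alpha,2}(\R^2)$, so that Definition~\ref{dorsaf2} is applicable. By Lemma~\ref{kes},
\[
\bigl|\mathcal{F}[\mathbb{I}^{\alpha,\lambda}_\pm f](\omega_1,\omega_2)\bigr|^2 = (\lambda_1^2+\omega_1^2)^{-\alpha_1}(\lambda_2^2+\omega_2^2)^{-\alpha_2}\bigl|\mathcal{F}[f](\omega_1,\omega_2)\bigr|^2,
\]
so the weight $(\lambda_1^2+\omega_1^2)^{\alpha_1}(\lambda_2^2+\omega_2^2)^{\alpha_2}$ cancels exactly, leaving $|\mathcal{F}[f]|^2$ which is integrable by Plancherel. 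Hence $\mathbb{I}^{\alpha,\lambda}_\pm f\in W^{\alpha,2}(\R^2)$.

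Next, applying Definition~\ref{dorsaf2} followed by Lemma~\ref{kes} I obtain
\[
\mathcal{F}\bigl[\mathbb{D}^{\alpha,\lambda}_\pm \mathbb{I}^{\alpha,\lambda}_\pm f\bigr](x,y) = (\lambda_1\pm ix)^{\alpha_1}(\lambda_2\pm iy)^{\alpha_2}\cdot(\lambda_1\pm ix)^{-\alpha_1}(\lambda_2\pm iy)^{-\alpha_2}\,\mathcal{F}[f](x,y) = \mathcal{F}[f](x,y).
\]
By uniqueness of the Fourier transform on $L^2(\R^2)$, \eqref{rad} follows.

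For the second identity \eqref{rad1} the argument is symmetric: given $f\in W^{\alpha,2}(\R^2)$, Definition~\ref{dorsaf2} gives $\mathbb{D}^{\alpha,\lambda}_\pm f\in L^2(\R^2)$ with Fourier transform $(\lambda_1\pm ix)^{\alpha_1}(\lambda_2\pm iy)^{\alpha_2}\mathcal{F}[f](x,y)$, so Lemma~\ref{kes} applies to yield
\[
\mathcal{F}\bigl[\mathbb{I}^{\alpha,\lambda}_\pm \mathbb{D}^{\alpha,\lambda}_\pm f\bigr](x,y) = (\lambda_1\pm ix)^{-\alpha_1}(\lambda_2\pm iy)^{-\alpha_2}\cdot(\lambda_1\pm ix)^{\alpha_1}(\lambda_2\pm iy)^{\alpha_2}\,\mathcal{F}[f](x,y) = \mathcal{F}[f](x,y),
\]
and Fourier inversion concludes. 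The only real obstacle is the membership check for $\mathbb{I}^{\alpha,\lambda}_\pm f$ in $W^{\alpha,2}$ above, but it is handled by the exact cancellation of weights; everything else is algebraic manipulation of Fourier symbols.
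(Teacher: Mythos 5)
Your proposal is correct and follows essentially the same route as the paper's proof: pass to the Fourier side via Lemma \ref{kes} and Definition \ref{dorsaf2}, observe that the symbols $(\lambda_j\pm i\,\cdot)^{\mp\alpha_j}$ cancel, and conclude by uniqueness of the Fourier transform. Your version is in fact slightly more careful, since you explicitly verify that $\mathbb{I}^{\alpha,\lambda}_\pm f\in W^{\alpha,2}(\R^2)$ (which the paper asserts without computation) and you write the derivative's symbol with the correct positive exponents, whereas the paper's displayed computation contains a sign typo at that step.
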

\begin{proof}
Given $f\in L^2(\R^2)$, note that $g(s,t)=\mathbb{I}^{\alpha,\,\lambda}_\pm f(s,t)$ satisfies, by Lemma \ref{kes},
$$\mathcal{F}[g](x,y)=\mathcal{F}[f](x,y)(\lambda_1\pm  ix)^{-\alpha_1} (\lambda_2\pm iy)^{-\alpha_2}.$$
Then, it follows easily that $g\in W^{\alpha,\,2}(\R^2)$.
Definition \ref{dorsaf2} implies that
\begin{eqnarray*}
\mathcal{F}[\mathbb{D}^{\alpha,\,\lambda}_\pm \mathbb{I}^{\alpha,\,\lambda}_\pm f](x,y)&=& \mathcal{F}[\mathbb{D}^{\alpha,\,\lambda}_\pm g](x,y)\\
&=& \mathcal{F}[g](x,y)(\lambda_1\pm  ix)^{-\alpha_1} (\lambda_2\pm iy)^{-\alpha_2}\\
&=& \mathcal{F}[f](x,y).
\end{eqnarray*}
Then, \eqref{rad} follows using the uniqueness of the Fourier transform. Similarly, we can prove \eqref{rad1}.
\end{proof}
\section{Wiener integrals with respect to the two-parameter tempered Hermite field of order one}
\label{sect6}
Recall that the two-parameter tempered Hermite field of order one is given by:
\begin{align}
& \qquad Z^{1,\,H_1,H_2}_{\lambda_1,\lambda_2}(s,t)\nonumber\\
&=\int_{\mathbb{R}^2}^\prime\int_0^t \int_0^s (a-x)_+^{H_1-\frac{3}{2}}e^{-\lambda_1(a-x)_+}(b-y)_+^{H_2-\frac{3}{2}}e^{-\lambda_2(b-y)_+}\,da\,db\,dW(x, y),\label{tHs1}
\end{align}
where $H_1,\,H_2>\frac{1}{2}$ and $\lambda_1,\lambda_2>0$.

In this section, we will develop the theory of Wiener integrals with respect to the two-parameter tempered Hermite field of order one. We consider two cases:
\begin{itemize}
  \item $\frac{1}{2}<H_1,\,H_2<1$, $\lambda_1,\,\lambda_2>0$
  \item $H_1,\,H_2>1$, $\lambda_1,\,\lambda_2>0$.
\end{itemize}
\subsection{Case 1: $\frac{1}{2}<H_1,\,H_2<1$ and $\lambda_1,\,\lambda_2>0$}
 We first establish a link between $Z^{1,\,H_1,H_2}_{\lambda_1,\lambda_2}$ and the two-parameter tempered fractional calculus developed in the previous section.
\begin{lemma}
For a two-parameter tempered Hermite field of order one given by \eqref{tHs1} with $\lambda_1,\,\lambda_2>0$, we have:
\begin{equation}\label{Oneh}
 Z^{1,\,H_1,H_2}_{\lambda_1,\lambda_2}(s,t)\,=\,  \Gamma(H_1-\frac{1}{2})\Gamma(H_2-\frac{1}{2})\int_{\mathbb{R}^2}^\prime \Big(\mathbb{I}^{\beta,\,\lambda}_- \mathbf{1}_{[0,s]\times[0,t]}\Big)(x,y) dW(x,y),
 \end{equation}
 where $\beta=(H_1-\frac{1}{2},H_2-\frac{1}{2})$ such that $H_1,\,H_2>\frac{1}{2}$.
\end{lemma}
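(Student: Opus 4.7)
The plan is to recognize that, after expanding the definition of $\mathbb{I}^{\beta,\lambda}_-$ applied to the indicator $\mathbf{1}_{[0,s]\times[0,t]}$, the deterministic kernel inside the Wiener integral on the right-hand side of \eqref{Oneh} coincides, up to the constant $\Gamma(H_1-\tfrac12)\Gamma(H_2-\tfrac12)$, with the inner Lebesgue integral appearing in \eqref{tHs1}; the identity then follows from the stochastic Fubini theorem established in Section \ref{sect2}.

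First, by the definition of the negative two-parameter tempered fractional integral with $\beta=(H_1-\tfrac12,H_2-\tfrac12)$, I would compute
\begin{align*}
\bigl(\mathbb{I}^{\beta,\lambda}_-\mathbf{1}_{[0,s]\times[0,t]}\bigr)(x,y)
&=\frac{1}{\Gamma(H_1-\tfrac12)\Gamma(H_2-\tfrac12)}\int_{\R^2} e^{-\lambda_1(u-x)_+}(u-x)_+^{H_1-\frac{3}{2}}\\
&\qquad\times e^{-\lambda_2(v-y)_+}(v-y)_+^{H_2-\frac{3}{2}}\mathbf{1}_{[0,s]\times[0,t]}(u,v)\,du\,dv\\
&=\frac{1}{\Gamma(H_1-\tfrac12)\Gamma(H_2-\tfrac12)}\int_0^t\!\!\int_0^s (a-x)_+^{H_1-\frac{3}{2}}e^{-\lambda_1(a-x)_+}\\
&\qquad\times (b-y)_+^{H_2-\frac{3}{2}}e^{-\lambda_2(b-y)_+}\,da\,db,
\end{align*}
after relabeling $u\mapsto a$, $v\mapsto b$. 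Thus $\Gamma(H_1-\tfrac12)\Gamma(H_2-\tfrac12)\bigl(\mathbb{I}^{\beta,\lambda}_-\mathbf{1}_{[0,s]\times[0,t]}\bigr)(x,y)$ is precisely the inner Lebesgue integral in the definition \eqref{tHs1} of $Z^{1,H_1,H_2}_{\lambda_1,\lambda_2}(s,t)$.

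Next, to justify interchanging the Lebesgue and Wiener integrals, I would verify that the kernel
$$f(a,b,x,y)=\mathbf{1}_{[0,t]\times[0,s]}(a,b)(a-x)_+^{H_1-\frac{3}{2}}e^{-\lambda_1(a-x)_+}(b-y)_+^{H_2-\frac{3}{2}}e^{-\lambda_2(b-y)_+}$$
belongs to $\mathcal{L}_{2,1}(\R^2\times\R^2)$. This is a direct estimate: for fixed $(a,b)\in[0,t]\times[0,s]$, the $L^2(\R^2)$-norm of $f(a,b,\cdot,\cdot)$ reduces, by translation invariance and the substitution $\xi=a-x$, $\omega=b-y$, to the product $\bigl(\int_0^\infty\xi^{2H_1-3}e^{-2\lambda_1\xi}d\xi\bigr)^{1/2}\bigl(\int_0^\infty\omega^{2H_2-3}e^{-2\lambda_2\omega}d\omega\bigr)^{1/2}$, which is finite for $H_1,H_2>\tfrac12$ since the exponential tempering kills the singularity at infinity and $2H_i-3>-1$ gives only a mild, integrable singularity at zero when $H_i\in(\tfrac12,1)$ (and no singularity at all when $H_i\ge1$). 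Integrating over $[0,t]\times[0,s]$ then gives a finite $\|f\|_{2,1}$.

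Finally, applying the stochastic Fubini theorem proved in Section \ref{sect2}, I interchange the Lebesgue integral $\int_0^t\int_0^s da\,db$ with the Wiener integral $\int'_{\R^2}dW(x,y)$ in \eqref{tHs1}, which yields
\begin{equation*}
Z^{1,H_1,H_2}_{\lambda_1,\lambda_2}(s,t)=\int_{\R^2}^\prime \Gamma(H_1-\tfrac12)\Gamma(H_2-\tfrac12)\bigl(\mathbb{I}^{\beta,\lambda}_-\mathbf{1}_{[0,s]\times[0,t]}\bigr)(x,y)\,dW(x,y),
\end{equation*}
and pulling the constant out of the linear Wiener integral finishes the proof. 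The only mildly delicate point is the integrability check above; everything else is a direct algebraic identification.
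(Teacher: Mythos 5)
Your proof is correct and follows essentially the same route as the paper: the entire content of the lemma is the pointwise identification of the kernel $\int_0^t\int_0^s(\cdots)\,da\,db$ with $\Gamma(H_1-\tfrac12)\Gamma(H_2-\tfrac12)\bigl(\mathbb{I}^{\beta,\lambda}_-\mathbf{1}_{[0,s]\times[0,t]}\bigr)(x,y)$, which you carry out exactly as the paper does. Note only that your concluding stochastic-Fubini step is superfluous and slightly misstated: since the Lebesgue integral already sits inside the Wiener integral in \eqref{tHs1}, no interchange of integrals is required --- the identity follows directly from the kernel identification, the square-integrability of that kernel being already guaranteed by Lemma \ref{lemmma}.
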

\begin{proof}
Write the kernel function in \eqref{tHs1} in the form
\begin{eqnarray*}
   h_{s,\,t}(x,y)&=& \int_0^s \int_0^t (a-x)_+^{H_1-\frac{3}{2}}e^{-\lambda_1(a-x)_+}(b-y)_+^{H_2-\frac{3}{2}}e^{-\lambda_2(b-y)_+}\,da\,db \\
   &=& \int_{\R^2} (a-x)_+^{H_1-\frac{3}{2}}e^{-\lambda_1(a-x)_+}(b-y)_+^{H_2-\frac{3}{2}}e^{-\lambda_2(b-y)_+}\mathbf{1}_{[0,s]\times[0,t]}(a,b)\,da\,db \\
   &=& \Gamma(H_1-\frac{1}{2}) \Gamma(H_2-\frac{1}{2})\Big(\mathbb{I}^{\beta,\,\lambda}_- \mathbf{1}_{[0,s]\times[0,t]}\Big)(x,y),
\end{eqnarray*}
where $\beta=(H_1-\frac{1}{2},H_2-\frac{1}{2})$.
\end{proof}
Next, we discuss a general construction for stochastic integrals with respect to $Z^{1,\,H_1,H_2}_{\lambda_1,\lambda_2}$. Recall how we classically defined Wiener integrals with respect to the Brownian field: first we define it for elementary functions and establish the isometry property, then we extend the integral for general functions via isometry.

Denote $\mathcal{E}$ the family of elementary functions on $\R^2$ of the form
{\small
\begin{equation}\label{yassin}
  f(x,y) = \sum_{\ell=1}^n a_\ell\mathbf{1}_{(s_\ell,\,s_{\ell+1}]\times (t_\ell,\,t_{\ell+1}]}(x,y) ,\,\,s_\ell<s_{\ell+1},\,t_\ell<t_{\ell+1}\, a_\ell\in\R,\,\,\ell=1,\ldots, n.
\end{equation}}
For functions like $f$ above we can naturally define its Wiener integral with respect to the two-parameter tempered Hermite field of order one as:
{\small\begin{align}
&\quad \mathcal{I}^{\alpha,\,\lambda}(f)\nonumber\\
&=\int_{\R^2}f(x,y)\,dZ^{1,\,H_1,H_2}_{\lambda_1,\lambda_2}(x,y)\nonumber\\
&=\sum_{\ell=1}^n a_\ell \Big[Z^{k,\,H_1,H_2}_{\lambda_1,\lambda_2}(s_{\ell+1}, t_{\ell+1})-Z^{k,\,H_1,H_2}_{\lambda_1,\lambda_2}(s_{\ell+1},t_\ell)-Z^{k,\,H_1,H_2}_{\lambda_1,\lambda_2}(s_\ell,t_{\ell+1})+
Z^{k,\,H_1,H_2}_{\lambda_1,\lambda_2}(s_\ell,t_\ell)\Big],\nonumber
\end{align}}
where $\beta=(H_1-\frac{1}{2},H_2-\frac{1}{2})$. Then it follows immediately from \eqref{Oneh} that for $f\in\mathcal{E}$, the space of elementary functions, the stochastic integral
\begin{eqnarray}
\mathcal{I}^{\alpha,\,\lambda}(f) &=&\int_{\R^2}f(x,y)\,dZ^{1,\,H_1,H_2}_{\lambda_1,\lambda_2}(x,y)\nonumber\\
 &=& \Gamma(H_1-\frac{1}{2})\Gamma(H_2-\frac{1}{2})\int_{\mathbb{R}^2}^\prime \Big(\mathbb{I}^{\beta,\,\lambda}_- f\Big)(a,b) dW(a,b)\nonumber
\end{eqnarray}
is a Gaussian random field with mean zero, such that for any $f,g\in\mathcal{E}$ we have
\begin{align}
    &\langle  \mathcal{I}^{\alpha,\,\lambda}(f),\,  \mathcal{I}^{\alpha,\,\lambda}(g)\rangle_{L^2(\Omega)}\nonumber\\
    &\quad =\mathbb{E}\Big[\int_{\R^2}f(x,y)\,dZ^{1,\,H_1,H_2}_{\lambda_1,\lambda_2}(x,y)\int_{\R^2}g(x,y)\,dZ^{1,\,H_1,H_2}_{\lambda_1,\lambda_2}(x,y)\Big]\nonumber\\
    &\quad =\Gamma(H_1-\frac{1}{2})^2\Gamma(H_2-\frac{1}{2})^2\int_{\mathbb{R}^2} \Big(\mathbb{I}^{\beta,\,\lambda}_- f\Big)(a,b)\Big(\mathbb{I}^{\beta,\,\lambda}_- g\Big)(a,b)\,dadb .\label{Tu}
\end{align}
The linear space of Gaussian random variables $\big\{ \mathcal{I}^{\alpha,\,\lambda}(f),\,\,f\in\mathcal{E}\big\}$ is contained in the larger linear space
\begin{equation*}
  \overline{\text{Sp}}(Z^{1,\,H_1,H_2}_{\lambda_1,\lambda_2})\,=\,\big\{X: \mathcal{I}^{\alpha,\,\lambda}(f_n)\to X\,\, \text{in $L^2(\Omega)$ for a sequence $f_n$ in $\mathcal{E}$}\big\}.
\end{equation*}
An element $X\in \overline{\text{Sp}}(Z^{1,\,H_1,H_2}_{\lambda_1,\lambda_2})$ is mean zero Gaussian with variance
$$\text{Var}(X)\,=\,\lim_{n\to\infty} \text{Var}[\mathcal{I}^{\alpha,\,\lambda}(f_n)],$$
and $X$ can be associated with an equivalence class of sequences of elementary functions $(f_n)$ such that $\mathcal{I}^{\alpha,\,\lambda}(f_n)\to X\,\, \text{in $L^2(\R^2)$}$. If $[f_X]$ denotes this class, then $X$ can be
written in an integral form as
 \begin{equation}\label{be}
   X\,=\,\int_{\R^2} [f_X]\, dZ^{1,\,H_1,H_2}_{\lambda_1,\lambda_2}
 \end{equation}
and the right-hand side of \eqref{be} is called the stochastic integral with respect to the two-parameter tempered Hermite field of order one $Z^{1,\,H_1,H_2}_{\lambda_1,\lambda_2}$ on $\R^2$.\\
Recall that for the case of Brownian field: $\lambda_1=\lambda_2=0$ and $H_1=H_2\frac{1}{2}$, $\mathcal{I}^{\alpha,\,\lambda}(f_n)\to X$ along with the following It\^{o} isometry
\begin{equation*}
  \langle \mathcal{I}(f),\,\mathcal{I}(g)\rangle_{L^2(\Omega)}\,=\,\text{Cov}[\mathcal{I}(f),\,\mathcal{I}(g)]\,=\,\int_{\R^2}f(x,y)g(x,y)\,dxdy\,=\,\langle f,\,g\rangle_{L^2(\R^2)}
\end{equation*}
implies that $(f_n)$ is a Cauchy sequence, and then since $L^2(\R^2)$ is a (complete) Hilbert space, there exists a unique $f\in L^2(\R^2)$ such that $f_n\to f$ in $L^2(\R^2)$, and we can write
\begin{equation*}
  X\,=\, \int_{\R^2}f(x,y)\,d W(x,y).
\end{equation*}
However, if the space of integrands is not complete, then the situation is more complicated. Here we investigate stochastic integral with
respect to the two-parameter tempered Hermite field of order one based on time domain representation. Equation \eqref{Tu} suggests the
appropriate space of integrands for the two-parameter tempered Hermite sheet of order one, in order to obtain a nice isometry that maps into the space $\overline{\text{Sp}}(Z^{1,\,H_1,H_2}_{\lambda_1,\lambda_2})$ of stochastic integrals.
\begin{theorem}\label{dorra3}
Given $\frac{1}{2}<H_1,\,H_2<1$ and $\lambda_1,\,\lambda_2>0$, the class of functions
\begin{equation*}
  \mathcal{H}_1\,:=\, \Bigg\{f\in L^2(\R^2):\,\, \int_{\R^2}\Big\vert \Big(\mathbb{I}^{\beta,\,\lambda}_- f\Big)(a,b)\Big\vert^2\,da\,db<\infty \Bigg\},
\end{equation*}
is a linear space with the inner product
\begin{equation}\label{Atef2}
  \langle f,\,g\rangle_{\mathcal{H}_1}\,:=\, \langle F, \, G\rangle_{L^2(\R^2)}
\end{equation}
where
\begin{equation*}
  F(a,b)\,=\,\Gamma(H_1-\frac{1}{2})\Gamma(H_2-\frac{1}{2})\Big(\mathbb{I}^{\beta,\,\lambda}_- f\Big)(a,b)
\end{equation*}
and
\begin{equation}\label{}
   G(a,b)\,=\, \Gamma(H_1-\frac{1}{2})\Gamma(H_2-\frac{1}{2})\Big(\mathbb{I}^{\beta,\,\lambda}_- g\Big)(a,b),
\end{equation}
where $\beta=(H_1-\frac{1}{2},H_2-\frac{1}{2})$ and $\lambda=(\lambda_1,\lambda_2)$. The set of elementary functions $\mathcal{E}$ is dense in the space $\mathcal{H}_1$. The space $\mathcal{H}_1$ is not complete.
\end{theorem}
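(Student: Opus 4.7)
I would split the statement into three tasks: (i) checking that $\mathcal{H}_1$ is a pre-Hilbert space, (ii) establishing density of $\mathcal{E}$, and (iii) producing a Cauchy sequence with no limit in $\mathcal{H}_1$. The observation that unlocks (i) and (ii) is Lemma \ref{vvv1} applied with $p=2$: it yields $\|\mathbb{I}^{\beta,\lambda}_- f\|_{L^2}\le\lambda_1^{-\beta_1}\lambda_2^{-\beta_2}\|f\|_{L^2}$ for every $f\in L^2(\R^2)$, so that in fact $\mathcal{H}_1=L^2(\R^2)$ as a set, with a \emph{weaker} inner product given by \eqref{Atef2}. Linearity and bilinearity of \eqref{Atef2} are then immediate; for positive-definiteness I would invoke Lemma \ref{kes}: since $\mathcal{F}[\mathbb{I}^{\beta,\lambda}_- f](\omega)=\mathcal{F}[f](\omega)(\lambda_1-i\omega_1)^{-\beta_1}(\lambda_2-i\omega_2)^{-\beta_2}$ and the Fourier multiplier is non-zero almost everywhere, $\mathbb{I}^{\beta,\lambda}_- f=0$ in $L^2$ forces $\mathcal{F}[f]=0$, and hence $f=0$.

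Task (ii) is then a two-line consequence of what we just said: the bound from Lemma \ref{vvv1} gives $\|\,\cdot\,\|_{\mathcal{H}_1}\le C\,\|\,\cdot\,\|_{L^2}$ for a suitable constant $C>0$; since $\mathcal{E}$ is classically dense in $L^2(\R^2)$, any $f\in\mathcal{H}_1=L^2(\R^2)$ is approximated in $L^2$-norm, and a fortiori in the weaker $\mathcal{H}_1$-norm, by a sequence in $\mathcal{E}$.

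The main obstacle is non-completeness. The plan is to introduce the linear isometry $\Phi:\mathcal{H}_1\to L^2(\R^2)$ defined by $\Phi(f)=\Gamma(H_1-\tfrac12)\Gamma(H_2-\tfrac12)\,\mathbb{I}^{\beta,\lambda}_- f$ and, via Lemma \ref{kes}, to identify its range: a function $g\in L^2(\R^2)$ lies (up to the multiplicative constant) in $\Phi(\mathcal{H}_1)$ precisely when
\[\int_{\R^2}|\mathcal{F}[g](\omega_1,\omega_2)|^2\,(\lambda_1^2+\omega_1^2)^{\beta_1}(\lambda_2^2+\omega_2^2)^{\beta_2}\,d\omega_1\,d\omega_2<\infty,\]
i.e.\ precisely when $g\in W^{\beta,2}(\R^2)$. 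Because $\beta_1,\beta_2>0$, this weighted Sobolev space is a proper dense subspace of $L^2(\R^2)$: density is clear (Schwartz functions lie in it), while properness is witnessed by any $g\in L^2$ with $\mathcal{F}[g](\omega_1,\omega_2)=|\omega_1|^{-1/2-\varepsilon}|\omega_2|^{-1/2-\varepsilon}\mathbf{1}_{\{|\omega_1|,|\omega_2|\ge 1\}}$ for $\varepsilon\in(0,\min(\beta_1,\beta_2))$, since the weighted integral then diverges. If $\mathcal{H}_1$ were complete, then $\Phi(\mathcal{H}_1)$ would have to be closed in $L^2(\R^2)$, contradicting proper density. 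Concretely, picking $g\in L^2\setminus W^{\beta,2}$ and $g_n\in W^{\beta,2}$ with $g_n\to g$ in $L^2$, the preimages $f_n:=\Phi^{-1}(g_n)$ form a $\mathcal{H}_1$-Cauchy sequence whose only possible $\mathcal{H}_1$-limit $f$ would yield $\Phi(f)=g\notin W^{\beta,2}=\Phi(\mathcal{H}_1)$, a contradiction.
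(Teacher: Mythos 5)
Your proposal is correct. Parts (i) and (ii) coincide with the paper's argument: both rest on Lemma \ref{vvv1} with $p=2$ (so that $\mathcal{H}_1=L^2(\R^2)$ as a set and $\|\cdot\|_{\mathcal{H}_1}\le C\|\cdot\|_{L^2}$, whence density of $\mathcal{E}$ is inherited from $L^2$); the only cosmetic difference is that for positive-definiteness the paper applies the left inverse $\mathbb{D}^{\beta,\lambda}_-$ via Lemma \ref{dorsaf}, whereas you read off injectivity from the a.e.-nonvanishing Fourier multiplier in Lemma \ref{kes} --- the same fact, since Lemma \ref{dorsaf} is itself proved on the Fourier side. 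The genuine divergence is in the non-completeness step. The paper works concretely: it takes $\widehat{f}_n(x,y)=|xy|^{-1/2}\mathbf{1}_{\{1<|x|,|y|<n\}}$, checks directly that $(f_n)$ is Cauchy in $\mathcal{H}_1$, and shows any limit would force $\widehat{f}=|xy|^{-1/2}\mathbf{1}_{\{|x|,|y|>1\}}\notin L^2(\R^2)$. You instead identify the range of the defining isometry $\Phi=c\,\mathbb{I}^{\beta,\lambda}_-$ as the Sobolev space $W^{\beta,2}(\R^2)$ already introduced in Section \ref{sect5}, observe it is dense and proper in $L^2(\R^2)$, and conclude that completeness of $\mathcal{H}_1$ would make this range closed, hence all of $L^2(\R^2)$. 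This is the same phenomenon viewed from the image side (your witness $|\omega_1\omega_2|^{-1/2-\e}\mathbf{1}$ is essentially the paper's example pushed through the multiplier), but your packaging makes the structural reason transparent and reusable, at the modest cost of needing the surjectivity half of the range identification, $W^{\beta,2}(\R^2)\subseteq\Phi(\mathcal{H}_1)$, which you assert "via Lemma \ref{kes}" but which actually requires Lemma \ref{dorsaf} (i.e.\ $\mathbb{I}^{\beta,\lambda}_-\mathbb{D}^{\beta,\lambda}_-g=g$ for $g\in W^{\beta,2}(\R^2)$); that lemma is available in the paper, so this is a citation to add, not a gap. The paper's route is more self-contained and explicit; yours is shorter once the fractional calculus of Section \ref{sect5} is taken as given.
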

\begin{proof}
To show that $\mathcal{H}_1$ is an inner product space, we will check that $ \langle f,\,f\rangle_{\mathcal{H}_1}=0$ implies $f=0$ almost everywhere. \\
If $ \langle f,\,f\rangle_{\mathcal{H}_1}=0$ then we have $ \langle F,\,F\rangle_{L^2(\R^2)}=0$, which implies that
\begin{equation*}\label{}
  F(a,b)\,=\,\Gamma(H_1-\frac{1}{2})\Gamma(H_2-\frac{1}{2})\Big(\mathbb{I}^{\beta,\,\lambda}_- f\Big)(a,b)=0,\,\,\text{ for almost every $(a,b)\in\R^2$}.
\end{equation*}
Then,
\begin{equation}\label{vvv}
  \Big(\mathbb{I}^{\beta,\,\lambda}_- f\Big)(a,b)=0,\,\,\text{ for almost every $(a,b)\in\R^2$}.
\end{equation}
Apply $\mathbb{D}^{\beta,\, \lambda}_-$, $\beta=(H_1-\frac{1}{2},H_2-\frac{1}{2})$, to both sides of equation \eqref{vvv} and use Lemma \ref{dorsaf} to get $f(a,b)=0$ for
almost every $(a,b)\in\R^2$, and hence $\mathcal{H}_1$ is an inner product space.

Next, we want to show that the set of elementary functions $\mathcal{E}$ is dense in $\mathcal{H}_1$. For
any $f\in \mathcal{H}_1$, we also have $f \in L^2(\R^2)$, and hence there exists a sequence of elementary
functions $(f_n)$ in $L^2(\R^2)$ such that $\|f-f_n\|_{L^2(\R^2)}$. But
$$\|f-f_n\|_{\mathcal{H}_1}= \langle f-f_n,\,f-f_n\rangle_{\mathcal{H}_1} =\langle F-F_n,\, F-F_n\rangle_{L^2(\R^2)}=\|F-F_n\|_{L^2(\R^2)},$$
where
\begin{equation}\label{}
  F_n(a,b)\,=\,\Gamma(H_1-\frac{1}{2})\Gamma(H_2-\frac{1}{2})\Big(\mathbb{I}^{\beta,\,\lambda}_- f_n\Big)(a,b).
\end{equation}
Lemma \ref{vvv1} implies that
$$\|f-f_n\|_{\mathcal{H}_1}= \|F-F_n\|_{L^2(\R^2)} =\|\mathbb{I}^{\beta,\,\lambda}_-(f-f_n)\|_{L^2(\R^2)}\leq C \|f-f_n\|_{L^2(\R^2)}$$
for some $C>0$, and since $\|f-f_n\|_{L^2(\R^2)}\to 0$, it follows that the set of elementary functions is dense in $\mathcal{H}_1$.

Finally, we provide an example to show that $\mathcal{H}_1$ is not complete. Proceeding as \cite[Proof of Theorem 3.1]{PipTaq2000:Int} the functions
$$\widehat{f}_n(x,y) =|xy|^{-p}\mathbf{1}_{\{1<|x|,\,|y|<n\}}(x,y),\,\,p>0,$$
are in $L^2(\R^2)$, $\overline{\widehat{f}_n(x,y)} = \widehat{f}_n(-x,-y)$, and hence they are the Fourier transforms of the function $f_n\in L^2(\R^2)$. Apply Lemma \ref{kes} to see that $F_n(x,y)=\Gamma(H_1-\frac{1}{2})\Gamma(H_2-\frac{1}{2})\big(\mathbb{I}^{\beta,\,\lambda}_-f\big)(x,y)$ have Fourier transform
\begin{equation}
\mathcal{F}[F_n](x,y)\,=\, \Gamma(H_1-\frac{1}{2})\Gamma(H_2-\frac{1}{2})(\lambda_1+ix)^{\frac{1}{2}-H_1}(\lambda_2+iy)^{\frac{1}{2}-H_2}\widehat{f}_n(x,y).
\end{equation}
Since $\frac{1}{2}-H_1,\,\frac{1}{2}-H_1<0$, it follows that
\begin{align*}
    \|F_n\|^2_2&=\|\mathcal{F}[F_n]\|^2_2= \Gamma(H_1-\frac{1}{2})^2\Gamma(H_2-\frac{1}{2})^2\\
    &\quad\times \int\limits_{-\infty}^\infty\int_{-\infty}^\infty\vert \widehat{f}_n(x,y)\vert^2(\lambda_1^2+x^2)^{\frac{1}{2}-H_1}(\lambda_2^2+y^2)^{\frac{1}{2}-H_2}\,dxdy<\infty
\end{align*}
 which shows that $f_n\in \mathcal{H}_1$. Now it is easy to check that $f_n-f_m\to0$ in $\mathcal{H}_1$, as $n,\,m\to\infty$, whenever $p>\max (1-H_1,\,1-H_2)$, so that $(f_n)$ is a Cauchy sequence. Choose $p=\frac{1}{2}$ and suppose that there exists some $f\in\mathcal{H}_1$ such that $\|f-f_n\|_{\mathcal{H}_1}\to 0$ as $n\to\infty$.
Then
\begin{equation*}
  \int_{-\infty}^\infty\int_{-\infty}^\infty\vert \widehat{f}_n(x,y)-\widehat{f}(x,y)\vert^2(\lambda_1^2+x^2)^{\frac{1}{2}-H_1}(\lambda_2^2+y^2)^{\frac{1}{2}-H_2}\,dxdy\to 0,
\end{equation*}
as $n\to\infty$, and since, for any given $m\geq1$, the value of $\widehat{f}_n(x,y)$ does not vary with $n > \max(m_1,\,m_2)$ whenever $(x,y)\in[-m_1,m_1]\times[-m_2,m_2]$, it follows that $\widehat{f}(x,y)=|xy|^{-\frac{1}{2}}\mathbf{1}_{\{|x|,\,|y|>1\}}$ on any such
interval. Since $m_1,\,m_2$ are arbitrary, it follows that $\widehat{f}(x,y)=|xy|^{-\frac{1}{2}}\mathbf{1}_{\{|x|,\,|y|>1\}}$, but this function
is not in $L^2(\R^2)$, so $\widehat{f}(x,y)\notin \mathcal{H}_1$, which is a contradiction. Hence $\mathcal{H}_1$ is not complete, and this completes the proof.
\end{proof}
We now define the stochastic integral with respect to the two-parameter tempered Hermite field for any function in $\mathcal{H}_1$ in the case where $\frac{1}{2} < H_1,\,H_2 < 1$.
\begin{definition}\label{dorra}
For any $\frac{1}{2} < H_1,\,H_2 < 1$ and $\lambda_1,\,\lambda_2 > 0$, we define
\begin{equation}\label{vvv2}
  \int_{\R^2} f(x,y)\, dZ^{1,\,H_1,H_2}_{\lambda_1,\lambda_2}(x,y)\,:=\, \Gamma(H_1-\frac{1}{2})\Gamma(H_2-\frac{1}{2})\int_{\R^2} \Big( \mathbb{I}^{\beta,\,\lambda}_- f\Big)(x,y)\,dW(x,y),
\end{equation}
where $\beta=(H_1-\frac{1}{2},H_2-\frac{1}{2})$, for any $f\in\mathcal{H}_1$.
\end{definition}
\begin{theorem}\label{aaaaaaaa}
For any  $\frac{1}{2} < H_1,\,H_2 < 1$ and $\lambda_1,\,\lambda_2 > 0$, the stochastic integral  in \eqref{vvv2} is
an isometry from $\mathcal{H}_1$ into $\overline{\text{Sp}}(Z^{1,\,H_1,H_2}_{\lambda_1,\lambda_2})$. Since $\mathcal{H}_1$ is not complete, these two spaces are not isometric.
\end{theorem}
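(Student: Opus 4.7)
The plan is two-fold: first, establish the isometry on the dense subspace $\mathcal{E}$ and extend by continuity; second, rule out an isometric isomorphism by a completeness argument.

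For elementary $f,g\in\mathcal{E}$, equation \eqref{Tu} combined with the definition \eqref{Atef2} of $\langle\cdot,\cdot\rangle_{\mathcal{H}_1}$ gives directly
$$\langle\mathcal{I}^{\alpha,\,\lambda}(f),\mathcal{I}^{\alpha,\,\lambda}(g)\rangle_{L^2(\Omega)}=\Gamma(H_1-\tfrac{1}{2})^2\Gamma(H_2-\tfrac{1}{2})^2\langle \mathbb{I}^{\beta,\,\lambda}_- f,\mathbb{I}^{\beta,\,\lambda}_- g\rangle_{L^2(\R^2)}=\langle f,g\rangle_{\mathcal{H}_1},$$
so $f\mapsto\mathcal{I}^{\alpha,\,\lambda}(f)$ is already an isometry on $\mathcal{E}$.

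To extend to all of $\mathcal{H}_1$, I would take any $f\in\mathcal{H}_1$ and, using the density part of Theorem~\ref{dorra3}, choose $(f_n)\subset\mathcal{E}$ with $\|f_n-f\|_{\mathcal{H}_1}\to 0$. Then $(f_n)$ is Cauchy in $\mathcal{H}_1$, so by the isometry on $\mathcal{E}$ the sequence $(\mathcal{I}^{\alpha,\,\lambda}(f_n))$ is Cauchy in $L^2(\Omega)$ and converges to some $X$ which belongs to $\overline{\text{Sp}}(Z^{1,\,H_1,H_2}_{\lambda_1,\lambda_2})$ by the very definition of the closed span. The only point requiring a little care is identifying $X$ with the right-hand side of \eqref{vvv2}: the statement $\|f_n-f\|_{\mathcal{H}_1}\to 0$ is precisely $\mathbb{I}^{\beta,\,\lambda}_- f_n\to\mathbb{I}^{\beta,\,\lambda}_- f$ in $L^2(\R^2)$, so the classical It\^{o} isometry for the Brownian field forces $\int(\mathbb{I}^{\beta,\,\lambda}_- f_n)\,dW\to\int(\mathbb{I}^{\beta,\,\lambda}_- f)\,dW$ in $L^2(\Omega)$; by uniqueness of $L^2(\Omega)$-limits this coincides with $X$, and this is exactly $\int f\,dZ^{1,\,H_1,H_2}_{\lambda_1,\lambda_2}$ in the sense of Definition~\ref{dorra}. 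Passing to the limit in the isometry identity extends it from $\mathcal{E}$ to $\mathcal{H}_1$.

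For the second assertion, $\overline{\text{Sp}}(Z^{1,\,H_1,H_2}_{\lambda_1,\lambda_2})$ is by construction a closed subspace of the Hilbert space $L^2(\Omega)$ and is therefore itself complete, whereas $\mathcal{H}_1$ is not complete by Theorem~\ref{dorra3}. An isometric isomorphism between the two would transport completeness from one to the other, which is impossible. The only mildly delicate step in this entire plan is the identification of the $L^2(\Omega)$-limit with the explicit formula of Definition~\ref{dorra}; everything else reduces to bookkeeping with two isometries and the $L^p$-boundedness of $\mathbb{I}^{\beta,\,\lambda}_-$ from Lemma~\ref{vvv1}.
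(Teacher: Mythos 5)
Your proposal is correct and follows essentially the same route as the paper: the isometry identity $\langle f,g\rangle_{\mathcal{H}_1}=\langle \mathcal{I}^{\alpha,\,\lambda}(f),\mathcal{I}^{\alpha,\,\lambda}(g)\rangle_{L^2(\Omega)}$ on $\mathcal{E}$, extension by the density statement of Theorem \ref{dorra3}, and the observation that an incomplete space cannot be isometric to the complete space $\overline{\text{Sp}}(Z^{1,\,H_1,H_2}_{\lambda_1,\lambda_2})$. The only difference is that you write out explicitly the Cauchy-sequence extension and the identification of the limit with Definition \ref{dorra}, whereas the paper delegates this to the two-parameter analogue of \cite[Proposition 2.1]{MR1790083}.
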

\begin{proof}
It follows from Lemma \ref{vvv1} that the stochastic integral \eqref{vvv2} is well-defined for any $f\in \mathcal{H}_1$.
The extension of \cite[Proposition 2.1]{PipTaq2000:Int} to $d=2$ is natural and it reads as follows: if $\mathcal{D}$ is an inner
product space such that $(f,g)_{\mathcal{D}} = \langle \mathcal{I}^{\alpha,\,\lambda}(f), \mathcal{I}^{\alpha,\,\lambda}(g)\rangle_{L^2(\Omega)}$ for all $f,\,g \in\mathcal{E}$ ($\mathcal{E}$ the family of elementary functions on $\R^2$ of the form \eqref{yassin}),  and if $\mathcal{E}$ is
dense in $\mathcal{D}$, then there is an isometry between $\mathcal{D}$ and a linear subspace of $\overline{\text{Sp}}(Z^{1,\,H_1,H_2}_{\lambda_1,\lambda_2})$ that
extends the map $f\to \mathcal{I}^{\alpha,\,\lambda}(f)$ for $f\in\mathcal{E}$, and furthermore, $\mathcal{D}$ is isometric to $\overline{\text{Sp}}(Z^{1,\,H_1,H_2}_{\lambda_1,\lambda_2})$ itself if and only if $\mathcal{D}$ is complete.
Using the It\^{o} isometry and the definition \ref{dorra}, it follows from \eqref{Atef2} that for any $f,\, g \in\mathcal{H}_1$ we have
$$\langle f,\,g\rangle_{\mathcal{H}_1}\,=\, \langle F,\,G\rangle_{L^2(\R^2)}\,=\, \langle \mathcal{I}^{\alpha,\,\lambda}(f),\, \mathcal{I}^{\alpha,\,\lambda}(g)\rangle_{L^2(\Omega)},$$
and then the result follows from Theorem \ref{dorra3}.
\end{proof}
We now apply the spectral domain representation of two-parameter tempered Hermite field given in Section \ref{sect4} to investigate the stochastic integral with respect to $Z^{1,\,H_1,H_2}_{\lambda_1,\lambda_2}$. First, recall that the Fourier transform of an indicator function is
\begin{eqnarray*}
  \mathcal{F}[\mathbf{1}_{[0,s]\times[0,t]}](\xi,\omega)&=& \frac{1}{2\pi}\int_0^s\int_0^s e^{i\xi x}e^{i\omega y}\,dx\,dy \\
   &=& -\frac{1}{2\pi}\frac{(e^{it\xi}-1)(e^{is\omega}-1)}{ \xi\omega}.
\end{eqnarray*}
Apply this to write this spectral domain representation of the two-parameter tempered Hermite field in the form
\begin{eqnarray}
  Z^{1,\,H_1,H_2}_{\lambda_1,\lambda_2}(s,t) &=& \Gamma(H_1-\frac{1}{2})\Gamma(H_2-\frac{1}{2})\int_{\mathbb{R}^2}^{\prime\prime} \mathcal{F}[\mathbf{1}_{[0,s]\times[0,t]}](\xi,\omega) \nonumber\\
   && \times (\lambda_1+i\xi)^{-(\frac{1}{2}-\frac{1-H_1}{k})}(\lambda_2+i\omega)^{-(\frac{1}{2}-\frac{1-H_2}{k})}\,
   \widehat{W}(d\xi d\omega). \nonumber
\end{eqnarray}
It follows easily that for any elementary function \eqref{yassin} we may write
\begin{eqnarray}
 \mathcal{I}^{\alpha,\,\lambda}(f) &=& \Gamma(H_1-\frac{1}{2})\Gamma(H_2-\frac{1}{2})\int_{\mathbb{R}^2}^{\prime\prime} \mathcal{F}[f](\xi,\omega) \nonumber\\
   && \times (\lambda_1+i\xi)^{\frac{1}{2}-H_1}(\lambda_2+i\omega)^{\frac{1}{2}-H_2}\,
   \widehat{W}(d\xi d\omega), \nonumber
\end{eqnarray}
and then for any elementary functions $f$ and $g$ we have
{\small\begin{align*}
&\langle \mathcal{I}^{\alpha,\,\lambda}(f),\,\mathcal{I}^{\alpha,\,\lambda}(g)\rangle_{L^2(\Omega)}=\Gamma(H_1-\frac{1}{2})\Gamma(H_2-\frac{1}{2})\\
&\quad\times \int\limits_{\mathbb{R}^2}^{\prime\prime} \mathcal{F}[f](\xi,\omega) \overline{\mathcal{F}[g](\xi,\omega)} (\lambda_1^2+\xi^2)^{\frac{1}{2}-H_1}(\lambda_2^2+\omega^2)^{\frac{1}{2}-H_2}\,
   \widehat{W}(d\xi d\omega).
\end{align*}}
\begin{theorem}\label{aissa1}
For any $\frac{1}{2} < H_1,\,H_2 < 1$ and $\lambda_1,\,\lambda_2 > 0$, the class of functions
\begin{equation}\label{}
  \mathcal{H}_2\,=\,\Bigg\{f\in L^2(\R^2):\,\, \int \Big\vert \mathcal{F}[f](\xi,\omega)\Big\vert^2 \, (\lambda_1^2+\xi^2)^{\frac{1}{2}-H_1}(\lambda_2^2+\omega^2)^{\frac{1}{2}-H_2} \,d\xi\,d\omega<\infty\Bigg\},
\end{equation}
is a linear space with the inner product
\begin{eqnarray*}
    \langle f,\, g\rangle_{\mathcal{H}_2}&=&\Gamma(H_1-\frac{1}{2})^2\Gamma(H_2-\frac{1}{2})^2\nonumber\\
    &&\times\int_{\mathbb{R}^2}^{\prime\prime} \mathcal{F}[f](\xi,\omega) \overline{\mathcal{F}[g](\xi,\omega)} (\lambda_1^2+\xi^2)^{\frac{1}{2}-H_1}(\lambda_2^2+\omega^2)^{\frac{1}{2}-H_2}\,
   \widehat{W}(d\xi ,d\omega).
\end{eqnarray*}

The set of elementary functions $\mathcal{E}$ is dense in the space $\mathcal{H}_2$. The space $\mathcal{H}_2$ is not
complete.
\end{theorem}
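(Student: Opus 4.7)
The plan is to recognize $\mathcal{H}_2$ as nothing more than $\mathcal{H}_1$ transported by the Fourier transform, and then to deduce the three claims directly from Theorem \ref{dorra3}. The bridge is Lemma \ref{kes} together with Plancherel's theorem.

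First, by Lemma \ref{kes} applied with $\beta=(H_1-\tfrac{1}{2},H_2-\tfrac{1}{2})$, one has
\begin{equation*}
\mathcal{F}\big[\mathbb{I}^{\beta,\lambda}_- f\big](\xi,\omega)=\mathcal{F}[f](\xi,\omega)\,(\lambda_1-i\xi)^{-(H_1-1/2)}(\lambda_2-i\omega)^{-(H_2-1/2)},
\end{equation*}
whence $\big|\mathcal{F}[\mathbb{I}^{\beta,\lambda}_- f]\big|^2 = \big|\mathcal{F}[f]\big|^2 (\lambda_1^2+\xi^2)^{1/2-H_1}(\lambda_2^2+\omega^2)^{1/2-H_2}$. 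Integrating and invoking Plancherel's theorem (an equality under the symmetric normalization used in Lemma \ref{kes}) yields
\begin{equation*}
\|\mathbb{I}^{\beta,\lambda}_- f\|_{L^2(\R^2)}^{2}=\int_{\R^2}\big|\mathcal{F}[f](\xi,\omega)\big|^2(\lambda_1^2+\xi^2)^{1/2-H_1}(\lambda_2^2+\omega^2)^{1/2-H_2}\,d\xi\,d\omega.
\end{equation*}
Thus $\mathcal{H}_1=\mathcal{H}_2$ as subsets of $L^2(\R^2)$, and by polarization (using the Hermitian symmetry $\mathcal{F}[f](-\xi,-\omega)=\overline{\mathcal{F}[f](\xi,\omega)}$ for real-valued $f$) the two inner products coincide. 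Consequently $\mathcal{H}_2$ inherits from $\mathcal{H}_1$ the status of an inner-product space; positive definiteness $\langle f,f\rangle_{\mathcal{H}_2}=0\Rightarrow f=0$ also follows directly from the strict positivity of the weight combined with Fourier uniqueness.

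Next, density of $\mathcal{E}$ in $\mathcal{H}_2$ is immediate from the corresponding statement in Theorem \ref{dorra3}. A short self-contained proof is also available: since $1/2-H_j<0$, the weight is dominated by the constant $\lambda_1^{1-2H_1}\lambda_2^{1-2H_2}$, so for any $f\in\mathcal{H}_2\subset L^2(\R^2)$ and elementary approximants $(f_n)\subset\mathcal{E}$ with $\|f-f_n\|_{L^2(\R^2)}\to 0$ (these exist because $\mathcal{E}$ is dense in $L^2(\R^2)$), Plancherel gives
\begin{equation*}
\|f-f_n\|_{\mathcal{H}_2}^{2}\le C\,\|f-f_n\|_{L^2(\R^2)}^{2}\longrightarrow 0.
\end{equation*}

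Finally, non-completeness transfers from Theorem \ref{dorra3} via the same counterexample. Let $\widehat{f}_n(x,y)=|xy|^{-1/2}\mathbf{1}_{\{1<|x|,|y|<n\}}(x,y)$ and let $f_n$ be the real-valued Fourier preimage. The Cauchy property of $(f_n)$ in $\mathcal{H}_2$ reduces to convergence of
\begin{equation*}
\int_{\{|x|,|y|>1\}}|xy|^{-1}(\lambda_1^2+x^2)^{1/2-H_1}(\lambda_2^2+y^2)^{1/2-H_2}\,dx\,dy,
\end{equation*}
which is finite because the integrand decays like $|x|^{-2H_1}|y|^{-2H_2}$ at infinity and $H_1,H_2>1/2$. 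However, any candidate $\mathcal{H}_2$-limit $f$ would necessarily satisfy $\mathcal{F}[f](x,y)=|xy|^{-1/2}\mathbf{1}_{\{|x|,|y|>1\}}$, which is not in $L^2(\R^2)$, contradicting $f\in\mathcal{H}_2\subset L^2(\R^2)$. I expect the only real obstacle to be the careful bookkeeping of the complex factors $(\lambda_j\pm i\xi)$ and the verification that the bilinear form in the statement does yield a real inner product on the real linear space $\mathcal{H}_2$; both reduce to routine symmetry considerations.
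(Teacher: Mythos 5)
Your proposal is correct and follows essentially the same route as the paper: you identify $\mathcal{H}_2$ with $\mathcal{H}_1$ through Lemma \ref{kes} and Plancherel's theorem (noting that the boundedness of the weight for $H_1,H_2>\frac{1}{2}$ makes both spaces coincide with $L^2(\R^2)$ as sets), and then transfer the inner-product structure, the density of $\mathcal{E}$, and the non-completeness counterexample from Theorem \ref{dorra3}. The only cosmetic difference is that you carry the factor $(\lambda_j-i\xi)$ consistently with Lemma \ref{kes} where the paper writes $(\lambda_j+i\xi)$, which is immaterial since only the modulus $(\lambda_j^2+\xi^2)$ enters the norm.
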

\begin{proof}
Since $H_1,\,H_2> \frac{1}{2}$, the function $(\lambda_1^2+\xi^2)^{\frac{1}{2}-H_1}(\lambda_2^2+\omega^2)^{\frac{1}{2}-H_2}$ is bounded by a constant $C(H_1,H_2, \lambda_1,\lambda_2)$ that depends only on $H_1,\,H_2,\,\lambda_1$ and $\lambda_2$, so for any $f\in L^2(\R^2)$ we have
{\footnotesize
\begin{equation}\label{}
  \int_{\R^2} \vert \mathcal{F}[f](\xi,\omega)\vert^2(\lambda_1^2+\xi^2)^{\frac{1}{2}-H_1}(\lambda_2^2+\omega^2)^{\frac{1}{2}-H_2}\,d\xi d\omega\leq C(H_1,H_2, \lambda_1,\lambda_2) \int_{\R^2} \vert \mathcal{F}[f](\xi,\omega)\vert^2 \,d\xi d\omega<\infty
\end{equation}}
and hence $f\in\mathcal{H}_2$.
Since $\mathcal{H}_2\subseteq L^2(\R^2)$ by definition, this proves that $L^2(\R^2)$ and $\mathcal{H}_2$
are the same set of functions, and then it follows from Lemma \ref{vvv1} that $\mathcal{H}_1$ and $\mathcal{H}_2$
are the same set of functions. Observe that $\varphi_f= \big(\mathbb{I}^{\beta ,\,\lambda}_-f\big)$, where$\beta=(H_1-\frac{1}{2},H_2-\frac{1}{2})$), is again a function with Fourier transform
$$\mathcal{F}[\varphi_f](\xi,\omega)\,=\, (\lambda_1+i\xi)^{\frac{1}{2}-H_1}(\lambda_2+i\omega)^{\frac{1}{2}-H_2}\mathcal{F}[f](\xi,\omega).$$
Then, it follows from the Plancherel Theorem that
{\small\begin{align*}
    & \langle f,\,g\rangle_{\mathcal{H}_1}\\
    &\,\,=\Gamma(H_1-\frac{1}{2})^2\Gamma(H_2-\frac{1}{2})\langle \varphi_f,\,\varphi_g\rangle_2 \\
    &\,\, =\Gamma(H_1-\frac{1}{2})^2\Gamma(H_2-\frac{1}{2})^2\langle \mathcal{F}[\varphi_f],\,\mathcal{F}[\varphi_g]\rangle_2\\
    &\,\,=\Gamma(H_1-\frac{1}{2})^2\Gamma(H_2-\frac{1}{2})^2\int_{-\infty}^\infty\int_{-\infty}^\infty \mathcal{F}[f](\xi,\omega)\overline{\mathcal{F}[g](\xi,\omega)}(\lambda_1^2+\xi^2)^{\frac{1}{2}-H_1}(\lambda_2^2+\omega^2)^{\frac{1}{2}-H_2}\,d\xi d\omega\\
    &\,\,=\langle f,\, g\rangle_{\mathcal{H}_2},
\end{align*}}
and hence the two inner products are identical.
Then, the conclusions of Theorem \ref{aissa1}
follow from Theorem \ref{dorra3}.
\end{proof}
\begin{definition}
For any $\frac{1}{2} < H_1,\,H_2 < 1$ and $\lambda_1,\,\lambda_2 > 0$, we define
\begin{eqnarray}
 \mathcal{I}^{\alpha,\,\lambda}(f) &=& \Gamma(H_1-\frac{1}{2})\Gamma(H_2-\frac{1}{2})\int_{\mathbb{R}^2}^{\prime\prime} \mathcal{F}[f](\xi,\omega) \nonumber\\
   && \times (\lambda_1+i\xi)^{\frac{1}{2}-H_1}(\lambda_2+i\omega)^{\frac{1}{2}-H_2}\,
   \widehat{W}(d\xi d\omega), \label{vvv3}
\end{eqnarray}
for any $f\in\mathcal{H}_2$.
\end{definition}
\begin{theorem}
For any  $\frac{1}{2} < H_1,\,H_2 < 1$ and $\lambda_1,\,\lambda_2 > 0$, the stochastic integral  in \eqref{vvv3} is
an isometry from $\mathcal{H}_2$ into $\overline{\text{Sp}}(Z^{1,\,H_1,H_2}_{\lambda_1,\lambda_2})$. Since $\mathcal{H}_2$ is not complete, these two spaces are not isometric.
\end{theorem}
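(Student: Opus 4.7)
The plan is to leverage Theorem \ref{aissa1}, which already does the heavy lifting: it identifies $\mathcal{H}_2$ and $\mathcal{H}_1$ as the same set of functions with identical inner products. In essence this theorem should be an immediate corollary of Theorem \ref{aaaaaaaa} via that identification, together with the observation that the spectral definition \eqref{vvv3} agrees with the time-domain definition \eqref{vvv2} for elementary functions (and hence, by density and continuity, on all of $\mathcal{H}_2$).

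More concretely, I would proceed in four short steps. First, I would check that $\mathcal{I}^{\alpha,\lambda}(f)$ is well defined for every $f\in\mathcal{H}_2$: the integrand in \eqref{vvv3} satisfies
\begin{equation*}
\int_{\R^2}\bigl|\mathcal{F}[f](\xi,\omega)\bigr|^2(\lambda_1^2+\xi^2)^{\frac{1}{2}-H_1}(\lambda_2^2+\omega^2)^{\frac{1}{2}-H_2}\,d\xi\,d\omega<\infty
\end{equation*}
by the very definition of $\mathcal{H}_2$, so the Wiener integral with respect to $\widehat{W}$ makes sense and lies in $L^2(\Omega)$. Second, I would verify on elementary functions $f\in\mathcal{E}$ that the spectral representation \eqref{vvv3} coincides with $\mathcal{I}^{\alpha,\lambda}(f)$ as defined from the increments of $Z^{1,H_1,H_2}_{\lambda_1,\lambda_2}$; this is exactly the computation carried out just before the statement of Theorem \ref{aissa1}, combined with the spectral representation of $Z^{1,H_1,H_2}_{\lambda_1,\lambda_2}$ established in Section \ref{sect4}.

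Third, I would invoke the isometry. From Theorem \ref{aissa1} we know that $\langle f,g\rangle_{\mathcal{H}_2}=\langle f,g\rangle_{\mathcal{H}_1}$ for all $f,g\in\mathcal{E}$, and by Theorem \ref{aaaaaaaa} (or directly from \eqref{Tu}),
\begin{equation*}
\langle f,g\rangle_{\mathcal{H}_1}=\langle \mathcal{I}^{\alpha,\lambda}(f),\mathcal{I}^{\alpha,\lambda}(g)\rangle_{L^2(\Omega)},\qquad f,g\in\mathcal{E}.
\end{equation*}
Since $\mathcal{E}$ is dense in $\mathcal{H}_2$, for any $f\in\mathcal{H}_2$ pick $(f_n)\subset\mathcal{E}$ with $f_n\to f$ in $\mathcal{H}_2$; then $(\mathcal{I}^{\alpha,\lambda}(f_n))$ is Cauchy in $L^2(\Omega)$ and its limit coincides with the object \eqref{vvv3} by continuity of the Wiener integral under the $\mathcal{H}_2$-norm. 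The identity $\|\mathcal{I}^{\alpha,\lambda}(f)\|_{L^2(\Omega)}=\|f\|_{\mathcal{H}_2}$ passes to the limit, giving the isometry into $\overline{\mathrm{Sp}}(Z^{1,H_1,H_2}_{\lambda_1,\lambda_2})$.

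Finally, for the non-surjectivity clause, I would argue by contradiction: $\overline{\mathrm{Sp}}(Z^{1,H_1,H_2}_{\lambda_1,\lambda_2})$ is a closed subspace of $L^2(\Omega)$ and therefore complete; an isometric bijection would force $\mathcal{H}_2$ to be complete too, contradicting the explicit Cauchy sequence $(f_n)$ with $\widehat{f}_n(x,y)=|xy|^{-1/2}\mathbf{1}_{\{1<|x|,|y|<n\}}$ constructed in the proof of Theorem \ref{dorra3}. I do not expect a serious obstacle here since everything reduces to Theorem \ref{aaaaaaaa} via the identification $\mathcal{H}_1=\mathcal{H}_2$; the only mildly delicate point is checking that the spectral and time-domain definitions of $\mathcal{I}^{\alpha,\lambda}$ agree on elementary functions, which follows from Proposition \ref{amam} applied to the kernel $h_{s,t}^{H_1,H_2,\lambda_1,\lambda_2}$ whose Fourier transform was computed in Section \ref{sect4}.
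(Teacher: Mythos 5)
Your proposal is correct and follows essentially the same route as the paper: the paper's own proof simply notes that $\mathcal{H}_1$ and $\mathcal{H}_2$ coincide (by the proof of Theorem \ref{aissa1}) and then cites Theorem \ref{aaaaaaaa}. You spell out the details the paper leaves implicit — in particular the agreement of the spectral and time-domain definitions on elementary functions and the density/limit argument — but the underlying argument is the same reduction.
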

\begin{proof}
The proof of Theorem \ref{aissa1} shows that $\mathcal{H}_1$ and $\mathcal{H}_2$ are identical when $H_1,\,H_2 > \frac{1}{2}$.
Then, the result follows immediately from Theorem \ref{aaaaaaaa}.
\end{proof}
\subsection{Case 2: $H_1,\,H_2>1$ and $\lambda_1,\,\lambda_2>0$}
Now, we consider the second case that we mentioned at the beginning of this section. We will show that $Z^{1,\,H_1,H_2}_{\lambda_1,\lambda_2}$ is a continuous semimartingale with a finite variation and hence one can define stochastic integrals $\displaystyle I(f) :=\int f(x,y) Z^{1,\,H_1,H_2}_{\lambda_1,\lambda_2}(dx,dy)$ in the standard manner, via the It\^{o} stochastic calculus.
\begin{theorem}
A two-parameter tempered Hermite field of order one {\small$\{Z^{1,\,H_1,H_2}_{\lambda_1,\lambda_2}(s,t)\}_{s,\,t\geq 0}$} with $H_1,\, H_2 > 1$
and $\lambda_1,\,\lambda_2 > 0$ is a continuous semimartingale with the canonical decomposition
\begin{equation}\label{38}
  Z^{1,\,H_1,H_2}_{\lambda_1,\lambda_2}(s,t)\,=\, \int_0^s\int_0^t M_{H_1,H_2,\lambda_1,\lambda_2}(x,y)\,dxdy
\end{equation}
where
\begin{align*}M_{H_1,H_2,\lambda_1,\lambda_2}(x,y)\,&:=\, \int_{-\infty}^\infty\int_{-\infty}^\infty (x-\xi)_+^{H_1-\frac{3}{2}}(y-\omega)_+^{H_2-\frac{3}{2}}\\ &\quad\times e^{-\lambda_1(x-\xi)_+}e^{-\lambda_2(y-\omega)_+}W(d\xi,d\omega).\end{align*}
Moreover, $\{Z^{1,\,H_1,H_2}_{\lambda_1,\lambda_2}(s,t)\}_{s\,t\geq 0}$ is a finite variation process.
\end{theorem}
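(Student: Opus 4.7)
The strategy is to apply the stochastic Fubini theorem from Section~\ref{sect2} to interchange the deterministic integration over $[0,s]\times[0,t]$ with the Wiener integration against $W$, and then recognize the inner stochastic integral as the process $M_{H_1,H_2,\lambda_1,\lambda_2}$. The hypothesis $H_1,H_2>1$ is what makes this strategy work, because the exponents $H_i-\tfrac{3}{2}>-\tfrac{1}{2}$ ensure $L^2$-integrability of the kernel pointwise in $(a,b)$, not only after an outer integration (contrary to the delicate case $\tfrac{1}{2}<H_i<1$).

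\textbf{Step 1: The integrand $M_{H_1,H_2,\lambda_1,\lambda_2}(x,y)$ is a well-defined Wiener integral for every $(x,y)\in\R^2$.} Writing the kernel as
\begin{equation*}
K_{x,y}(\xi,\omega)=(x-\xi)_+^{H_1-\frac{3}{2}}(y-\omega)_+^{H_2-\frac{3}{2}}e^{-\lambda_1(x-\xi)_+}e^{-\lambda_2(y-\omega)_+},
\end{equation*}
I would show $K_{x,y}\in L^2(\R^2)$ by splitting the integration domain into $\xi<x,\omega<y$ and bounding
\begin{equation*}
\int_{-\infty}^x\int_{-\infty}^y (x-\xi)^{2H_1-3}(y-\omega)^{2H_2-3}e^{-2\lambda_1(x-\xi)}e^{-2\lambda_2(y-\omega)}\,d\xi d\omega=\frac{\Gamma(2H_1-2)\Gamma(2H_2-2)}{(2\lambda_1)^{2H_1-2}(2\lambda_2)^{2H_2-2}},
\end{equation*}
which is finite precisely because $2H_1-3>-1$ and $2H_2-3>-1$. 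Hence $M_{H_1,H_2,\lambda_1,\lambda_2}(x,y):=I_1^{W}(K_{x,y})$ exists and is a centered Gaussian random variable.

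\textbf{Step 2: Applying the stochastic Fubini theorem.} I would verify that the function
\begin{equation*}
f(a,b,\xi,\omega)=(a-\xi)_+^{H_1-\frac{3}{2}}(b-\omega)_+^{H_2-\frac{3}{2}}e^{-\lambda_1(a-\xi)_+}e^{-\lambda_2(b-\omega)_+}\mathbf{1}_{[0,s]\times[0,t]}(a,b)
\end{equation*}
belongs to $\mathcal{L}_{2,1}(\R^2\times\R^2)$ by computing
\begin{equation*}
\|f\|_{2,1}=\int_0^s\int_0^t\Big(\int_{\R^2}K_{a,b}(\xi,\omega)^2\,d\xi d\omega\Big)^{1/2}da\,db\leq st\,\|K_{0,0}\|_2<\infty,
\end{equation*}
using translation invariance (in distribution) of the kernel. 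Then Theorem on the stochastic Fubini identity lets me interchange the order of integration in \eqref{tHs1}:
\begin{equation*}
Z^{1,H_1,H_2}_{\lambda_1,\lambda_2}(s,t)=\int_0^s\int_0^t\Bigl[\int_{\R^2}K_{a,b}(\xi,\omega)\,dW(\xi,\omega)\Bigr]da\,db=\int_0^s\int_0^t M_{H_1,H_2,\lambda_1,\lambda_2}(a,b)\,da\,db,
\end{equation*}
which is precisely the decomposition \eqref{38}.

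\textbf{Step 3: Finite variation and semimartingale property.} Once \eqref{38} is established, I would conclude the finite variation property by noting that $\int_0^s\int_0^t|M_{H_1,H_2,\lambda_1,\lambda_2}(x,y)|\,dx\,dy<\infty$ almost surely, since by Step~1 and the isometry $\mathbb{E}|M(x,y)|\leq(\mathbb{E}|M(x,y)|^2)^{1/2}=\|K_{0,0}\|_2$ is a finite constant independent of $(x,y)$, so Fubini in the ordinary sense gives integrability of the sample paths of $M$ over any compact rectangle. Thus $Z^{1,H_1,H_2}_{\lambda_1,\lambda_2}$ is absolutely continuous in each parameter with respect to Lebesgue measure, hence of bounded variation on every compact rectangle, which in the two-parameter framework gives the continuous semimartingale property (a continuous finite variation process being trivially its own canonical decomposition, with vanishing martingale part). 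Continuity of the sample paths was already secured by the Kolmogorov-type estimate proved earlier in the paper.

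\textbf{Main obstacle.} The only genuinely delicate point is the application of the stochastic Fubini theorem: one must be careful to check that the mixed norm $\|f\|_{2,1}$ is finite uniformly in the tempering parameters, and that the condition $H_i>1$ (and not merely $H_i>\tfrac12$) is used exactly where it is needed, namely to make $K_{a,b}\in L^2$ for each fixed $(a,b)$. Once Fubini is justified, the remaining statements reduce to basic facts about absolutely continuous processes.
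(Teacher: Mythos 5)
Your proof is correct, and it reaches the conclusion by a more self-contained route than the paper. The paper frames the result as an instance of (a two-parameter extension of) Cheridito's characterization of Gaussian moving averages that are semimartingales \cite{MR2024843}: it writes the process as $Y^g_{s,t}$ for $g(s,t)=\int_0^s\int_0^t x^{H_1-\frac32}y^{H_2-\frac32}e^{-\lambda_1x}e^{-\lambda_2y}\,dx\,dy$, checks that the density $h(x,y)=x^{H_1-\frac32}y^{H_2-\frac32}e^{-\lambda_1x}e^{-\lambda_2y}$ lies in $L^2(\R^2)$ precisely when $H_1,H_2>1$, and reads off the canonical decomposition and the finite-variation property from $g(0,0)=0$; the interchange of the $da\,db$ and $dW$ integrations still appears at the end of the paper's argument, just as in yours. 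You instead bypass Cheridito entirely: you verify pointwise square-integrability of the kernel $K_{a,b}$ (your Gamma-function computation is exactly where $H_i>1$ enters, and it is the same condition as the paper's $h\in L^2$), check the mixed-norm condition $\|f\|_{2,1}<\infty$ so that the stochastic Fubini theorem of Section~\ref{sect2} applies with $k=1$, and then get finite variation directly from $\mathbb{E}|M(x,y)|\le\|K_{0,0}\|_2$ and ordinary Fubini. What your approach buys is that every ingredient is already proved in the paper (no appeal to an unproved ``natural extension'' of \cite[Theorem 3.9]{MR2024843} is needed for the positive direction); what the paper's approach buys is the converse half of Cheridito's theorem, which it reuses in the subsequent remark to assert that the process is \emph{not} a semimartingale when $\frac12<H_i<1$ --- your argument, being purely sufficient, gives no information in that regime. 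Two minor points: the $L^2$-norm $\|K_{a,b}\|_2$ is exactly translation invariant, not merely ``in distribution,'' so your inequality in Step~2 is in fact an equality; and, like the paper, you leave the precise two-parameter notion of ``semimartingale'' and ``canonical decomposition'' informal, which is acceptable here since the martingale part vanishes and the process is purely of finite variation.
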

\begin{proof} Then proof is similar to that of \cite[Theorem 3.2]{MeeSab2014:SI}. Let $(W(s,t),\, s,t\in\R)$ be a two-parameter Brownian field and let $\{\mathcal{F}^W_{s,t}\}_{s,t\leq 0}$ be the $\sigma$-algebra generated by $\{W(x,y): \,0\leq x\leq s,\, 0\leq y\leq t\}$.\\
Given a function $g:\R^2\to\R$ such that $g(s,t)=0$ for all $s ,\,t< 0$, and
\begin{equation}\label{hg}
  g(s,t)\,=\,  C+\int_0^s\int_0^sh(x,y)\,dxdy,\,\,\,\text{for all$s,\,t>0$},
\end{equation}
for $C\in\R$ and some function $h\in L^2(\R^2)$.\\
A natural extension of  \cite[Theorem 3.9]{Che2004:Gaussian} to $\R^2$ shows that the Gaussian stationary increment process
\begin{equation}\label{mar}
  Y_{s,t}^g:=\int_{\R^2}\Big[ g(t-u,s-v)-g(-u,s-v)-g(t-u,-v)+g(-u,-v)\Big]\,dudv
\end{equation}
is a continuous $\{\mathcal{F}^W_{s,t}\}_{s,t\geq 0}$ semimartingale with canonical decomposition
\begin{equation}\label{}
  Y_{s,t}^g\,=\, g(0,0)W_{s,t}+\int_0^s\int_{-\infty}^x\int_0^t\int_{-\infty}^y h(x-u,y-v)\,W(du,dv)\,dxdy,
\end{equation}
and conversely, that if \eqref{mar} defines a semimartingale on $[0, T_1]\times[0,T_2]$ for some $T_1,T_2 > 0$, then $g$
satisfies these properties.

In our case, we define $g(s,t)=0$ for $s,\,t\leq 0$ and
\begin{equation}\label{}
  g(s,t):=\int_0^s\int_0^tx^{H_1-\frac{3}{2}}y^{H_2-\frac{3}{2}}e^{-\lambda_1x}e^{-\lambda_1 y}\,dxdy\,\,\, \text{for $s,\,t>0$}.
\end{equation}
Following as in Lemma \ref{lemmma}, we can show that the function $ g(t-u,s-v)-g(-u,s-v)-g(t-u,-v)+g(-u,-v)$ is square integrable over $\R^2$ for any $H_1,\,H_2>\frac{1}{2}$ and $\lambda_1,\,\lambda_2>0$.\\
Next, we observe that \eqref{hg} holds with $C=0$, $h(x,y)=0$ for $x,\,y<0$ and
\begin{equation}\label{}
  h(x,y):=x^{H_1-\frac{3}{2}}y^{H_2-\frac{3}{2}}e^{-\lambda_1x}e^{-\lambda_1 y}\in L^2(\R^2)
\end{equation}
for any $H_1,\,H_2>1$ and $\lambda_1,\,\lambda_2>0$. Then, it follows that the two-parameter tempered Hermite field of order one is a continuous semimartingale with canonical decomposition
\begin{align}
 &Z^{1,\,H_1,H_2}_{\lambda_1,\lambda_2}(s,t)\nonumber\\
 &\quad= \int_{\mathbb{R}^2}^\prime\int_0^t \int_0^s (a-x)_+^{H_1-\frac{3}{2}}e^{-\lambda_1(a-x)_+}(b-y)_+^{H_2-\frac{3}{2}}e^{-\lambda_2(b-y)_+}\,da\,db\,dW(x, y)\nonumber\\
 &\quad=\int_0^t \int_0^s\int_{\mathbb{R}^2}^\prime (a-x)_+^{H_1-\frac{3}{2}}e^{-\lambda_1(a-x)_+}(b-y)_+^{H_2-\frac{3}{2}}e^{-\lambda_2(b-y)_+}\,dW(x, y)\,da\,db,
\end{align}
which reduces to \eqref{38}. Since $C = 0$, a extension of \cite[Theorem 3.9]{Che2004:Gaussian} implies that $\{ Z^{1,\,H_1,H_2}_{\lambda_1,\lambda_2}\}$ is a
finite variation process.
\end{proof}
\begin{remark}
It is not hard to check that the two-parameter tempered Hermite field of order one is
not a semimartingale in the remaining case when $\frac{1}{2}< H_1,\,H_2 < 1$.
\end{remark}

\begin{filecontents}{bibatef1.bib}
@article {Taqqu79,
    AUTHOR = {Taqqu, M. S.},
     TITLE = {Convergence of integrated processes of arbitrary {H}ermite
              rank},
   JOURNAL = {Z. Wahrsch. Verw. Gebiete},
  FJOURNAL = {Zeitschrift f\"ur Wahrscheinlichkeitstheorie und Verwandte
              Gebiete},
    VOLUME = {50},
      YEAR = {1979},
    NUMBER = {1},
     PAGES = {53--83},
      ISSN = {0044-3719},
   MRCLASS = {60F05 (60G15 60K35)},
  MRNUMBER = {550123},
MRREVIEWER = {B. S. Nakhapetyan},
       URL = {https://doi.org/10.1007/BF00535674},
}
@inbook{kind,
     chapter={Modified Bessel Functions $I$ and $K$},
     TITLE = {Handbook of mathematical functions with formulas, graphs, and
              mathematical tables},
    EDITOR = {Abramowitz, M. and Stegun, I. A.},
      NOTE = {Reprint of the 1972 edition},
 PUBLISHER = {Dover Publications, Inc., New York},
      YEAR = {1992},
     PAGES = {xiv+1046},
      ISBN = {0-486-61272-4},
   MRCLASS = {00A20 (00A22 33-00)},
  MRNUMBER = {1225604},
}
@article {kind0,
    AUTHOR = {Stein, J.},
     TITLE = {Table errata: {\it {H}andbook of mathematical functions with
              formulas, graphs, and mathematical tables}\ ({N}at. {B}ur.
              {S}tandards, {W}ashington, {D}.{C}., 1964) edited by {M}ilton
              {A}bramowitz and {I}rene {A}. {S}tegun},
   JOURNAL = {Math. Comp.},
  FJOURNAL = {Mathematics of Computation},
    VOLUME = {24},
      YEAR = {1970},
    NUMBER = {110},
     PAGES = {503},
      ISSN = {0025-5718},
   MRCLASS = {65A05 (33-XX)},
  MRNUMBER = {0415962},
       URL =
              {http://links.jstor.org/sici?sici=0025-5718(197004)24:110<503:TE>2.0.CO;2-O&origin=MSN},
}
@book {Hewitt,
    AUTHOR = {Hewitt, E. and Ross, K. A.},
     TITLE = {Abstract harmonic analysis. {V}ol. {II}: {S}tructure and
              analysis for compact groups. {A}nalysis on locally compact
              {A}belian groups},
    SERIES = {Die Grundlehren der mathematischen Wissenschaften, Band 152},
 PUBLISHER = {Springer-Verlag, New York-Berlin},
      YEAR = {1970},
     PAGES = {ix+771},
   MRCLASS = {42.50 (22.00)},
  MRNUMBER = {0262773},
MRREVIEWER = {R. E. Edwards},
}

@book {Fouriert,
    AUTHOR = {Bracewell, R. N.},
     TITLE = {The {F}ourier transform and its applications},
    SERIES = {McGraw-Hill Series in Electrical Engineering. Circuits and
              Systems},
   EDITION = {Third},
 PUBLISHER = {McGraw-Hill Book Co., New York},
      YEAR = {1986},
     PAGES = {xx+474},
      ISBN = {0-07-007015-6},
   MRCLASS = {42A38},
  MRNUMBER = {924577},
}

@book{kind1,
   title =     {Mathematical methods for physicists},
   author =    {Arfken, G.},
   publisher = {Academic Press},
   isbn =      {0120598108,9780120598106,0120598205},
   year =      {1985},
   series =    {},
   edition =   {3rd ed},
   volume =    {},
   url =       {http://gen.lib.rus.ec/book/index.php?md5=736C08A845F34A984FD35E154F51E9F6}}

@article {HS2,
    AUTHOR = {Breton, J. C.},
     TITLE = {On the rate of convergence in non-central asymptotics of the
              {H}ermite variations of fractional {B}rownian sheet},
   JOURNAL = {Probab. Math. Statist.},
  FJOURNAL = {Probability and Mathematical Statistics},
    VOLUME = {31},
      YEAR = {2011},
    NUMBER = {2},
     PAGES = {301--311},
      ISSN = {0208-4147},
   MRCLASS = {60F05 (60G22 60G60 60H05)},
  MRNUMBER = {2853680},
MRREVIEWER = {A. Ya. Olenko},
}

@book {Nua,
    AUTHOR = {Nualart, D.},
     TITLE = {The {M}alliavin calculus and related topics},
    SERIES = {Probability and its Applications (New York)},
   EDITION = {Second},
 PUBLISHER = {Springer-Verlag, Berlin},
      YEAR = {2006},
     PAGES = {xiv+382},
      ISBN = {978-3-540-28328-7; 3-540-28328-5},
   MRCLASS = {60-02 (60H07 60H30)},
  MRNUMBER = {2200233},
MRREVIEWER = {Daniel Ocone},
}

@article {Pak,
    AUTHOR = {Pakkanen, M. S. and R\'eveillac, A.},
     TITLE = {Functional limit theorems for generalized variations of the
              fractional {B}rownian sheet},
   JOURNAL = {Bernoulli},
  FJOURNAL = {Bernoulli. Official Journal of the Bernoulli Society for
              Mathematical Statistics and Probability},
    VOLUME = {22},
      YEAR = {2016},
    NUMBER = {3},
     PAGES = {1671--1708},
      ISSN = {1350-7265},
   MRCLASS = {60F17 (60G22 60G60)},
  MRNUMBER = {3474829},
MRREVIEWER = {Yizao Wang},
       DOI = {10.3150/15-BEJ707},
       URL = {https://doi.org/10.3150/15-BEJ707},
}

@book {hh,
    AUTHOR = {Peccati, G. and Taqqu, M. S.},
     TITLE = {Wiener chaos: moments, cumulants and diagrams},
    SERIES = {Bocconi \& Springer Series},
    VOLUME = {1},
      NOTE = {A survey with computer implementation,
              Supplementary material available online},
 PUBLISHER = {Springer, Milan; Bocconi University Press, Milan},
      YEAR = {2011},
     PAGES = {xiv+274},
      ISBN = {978-88-470-1678-1},
   MRCLASS = {60H05 (05D40 60-08 60G15 60H30 65Cxx)},
  MRNUMBER = {2791919},
MRREVIEWER = {Sergey V. Lototsky},
       DOI = {10.1007/978-88-470-1679-8},
       URL = {https://doi.org/10.1007/978-88-470-1679-8},
}

@book {Taqqu17,
    AUTHOR = {Pipiras, V. and Taqqu, M. S.},
     TITLE = {Long-range dependence and self-similarity},
    SERIES = {Cambridge Series in Statistical and Probabilistic Mathematics,
              [45]},
 PUBLISHER = {Cambridge University Press, Cambridge},
      YEAR = {2017},
     PAGES = {xxiii+668},
      ISBN = {978-1-107-03946-9},
   MRCLASS = {60G18 (60Fxx 60G22 60H07 62M10)},
  MRNUMBER = {3729426},
}

@article {taqq,
    AUTHOR = {Pipiras, V. and Taqqu, M. S.},
     TITLE = {Regularization and integral representations of {H}ermite
              processes},
   JOURNAL = {Statist. Probab. Lett.},
  FJOURNAL = {Statistics \& Probability Letters},
    VOLUME = {80},
      YEAR = {2010},
    NUMBER = {23-24},
     PAGES = {2014--2023},
      ISSN = {0167-7152},
   MRCLASS = {60G18 (60G15 60G22 60H05)},
  MRNUMBER = {2734275},
MRREVIEWER = {Jian Song},
       DOI = {10.1016/j.spl.2010.09.008},
       URL = {https://doi.org/10.1016/j.spl.2010.09.008},
}

@article {HS1,
    AUTHOR = {R\'eveillac, A. and Stauch, M. and Tudor, C. A.},
     TITLE = {Hermite variations of the fractional {B}rownian sheet},
   JOURNAL = {Stoch. Dyn.},
  FJOURNAL = {Stochastics and Dynamics},
    VOLUME = {12},
      YEAR = {2012},
    NUMBER = {3},
     PAGES = {1150021, 21},
      ISSN = {0219-4937},
   MRCLASS = {60F05 (60G22 60G60 60H07)},
  MRNUMBER = {2926578},
MRREVIEWER = {Jean Picard},
       DOI = {10.1142/S0219493711500213},
       URL = {https://doi.org/10.1142/S0219493711500213},
}

@article {Sab,
    AUTHOR = {Sabzikar, F.},
     TITLE = {Tempered {H}ermite process},
   JOURNAL = {Mod. Stoch. Theory Appl.},
  FJOURNAL = {Modern Stochastics. Theory and Applications},
    VOLUME = {2},
      YEAR = {2015},
    NUMBER = {4},
     PAGES = {327--341},
      ISSN = {2351-6046},
   MRCLASS = {60F17 (60G10 60G12 60G18)},
  MRNUMBER = {3456141},
MRREVIEWER = {Shuyang Bai},
       DOI = {10.15559/15-VMSTA34},
       URL = {https://doi.org/10.15559/15-VMSTA34},
}
@article {HS3,
    AUTHOR = {Clarke De la Cerda, Jorge and Tudor, Ciprian A.},
     TITLE = {Wiener integrals with respect to the {H}ermite random field
              and applications to the wave equation},
   JOURNAL = {Collect. Math.},
  FJOURNAL = {Collectanea Mathematica},
    VOLUME = {65},
      YEAR = {2014},
    NUMBER = {3},
     PAGES = {341--356},
      ISSN = {0010-0757},
   MRCLASS = {60H05 (60G18 60G60 60H15)},
  MRNUMBER = {3240998},
       DOI = {10.1007/s13348-014-0108-9},
       URL = {https://doi.org/10.1007/s13348-014-0108-9},
}	
@article {Bai-Taqqu,
    AUTHOR = {Bai, S. and Taqqu, M. S.},
     TITLE = {Generalized {H}ermite processes, discrete chaos and limit
              theorems},
   JOURNAL = {Stochastic Process. Appl.},
  FJOURNAL = {Stochastic Processes and their Applications},
    VOLUME = {124},
      YEAR = {2014},
    NUMBER = {4},
     PAGES = {1710--1739},
      ISSN = {0304-4149},
   MRCLASS = {60G18 (60F05)},
  MRNUMBER = {3163219},
MRREVIEWER = {Peter Parczewski},
       DOI = {10.1016/j.spa.2013.12.011},
       URL1 = {https://doi.org/10.1016/j.spa.2013.12.011},
}
@article {Drap,
    AUTHOR = {Ayache, A. and Leger, S. and Pontier, M.},
     TITLE = {Drap brownien fractionnaire},
   JOURNAL = {Potential Anal.},
  FJOURNAL = {Potential Analysis. An International Journal Devoted to the
              Interactions between Potential Theory, Probability Theory,
              Geometry and Functional Analysis},
    VOLUME = {17},
      YEAR = {2002},
    NUMBER = {1},
     PAGES = {31--43},
      ISSN = {0926-2601},
   MRCLASS = {60G60 (60G15 60G17 60G18)},
  MRNUMBER = {1906407},
MRREVIEWER = {Jean Vaillancourt},
       DOI = {10.1023/A:1015260803576},
       URL = {https://doi.org/10.1023/A:1015260803576},
}

@article {PipTaq2000:Int,
    AUTHOR = {Pipiras, V. and Taqqu, M. S.},
     TITLE = {Integration questions related to fractional {B}rownian motion},
   JOURNAL = {Probab. Theory Related Fields},
  FJOURNAL = {Probability Theory and Related Fields},
    VOLUME = {118},
      YEAR = {2000},
    NUMBER = {2},
     PAGES = {251--291},
      ISSN = {0178-8051},
   MRCLASS = {60H05 (60G18)},
  MRNUMBER = {1790083},
MRREVIEWER = {David Nualart},
       DOI = {10.1007/s440-000-8016-7},
       URL = {https://doi.org/10.1007/s440-000-8016-7},
}
@article {Che2004:Gaussian,
    AUTHOR = {Cheridito, P.},
     TITLE = {Gaussian moving averages, semimartingales and option pricing},
   JOURNAL = {Stochastic Process. Appl.},
  FJOURNAL = {Stochastic Processes and their Applications},
    VOLUME = {109},
      YEAR = {2004},
    NUMBER = {1},
     PAGES = {47--68},
      ISSN = {0304-4149},
   MRCLASS = {60G15 (60G30 91B28)},
  MRNUMBER = {2024843},
MRREVIEWER = {N. Leonenko},
       DOI = {10.1016/j.spa.2003.08.002},
       URL = {https://doi.org/10.1016/j.spa.2003.08.002},
}

@article {MeeSab2014:SI,
    AUTHOR = {Meerschaert, M. M. and Sabzikar, F.},
     TITLE = {Stochastic integration for tempered fractional {B}rownian
              motion},
   JOURNAL = {Stochastic Process. Appl.},
  FJOURNAL = {Stochastic Processes and their Applications},
    VOLUME = {124},
      YEAR = {2014},
    NUMBER = {7},
     PAGES = {2363--2387},
      ISSN = {0304-4149},
   MRCLASS = {60H05 (26A33 60G22)},
  MRNUMBER = {3192500},
       DOI = {10.1016/j.spa.2014.03.002},
       URL = {https://doi.org/10.1016/j.spa.2014.03.002},
}
@book{book-Tudor,
    AUTHOR = {Tudor, Ciprian A.},
     TITLE = {Analysis of Variations for Self-similar Processes},
    SERIES = {Probability and Its Applications},
 PUBLISHER = {Springer, Cham},
      YEAR = {2013},
     PAGES = {XI, 268},
      DOI = {https://doi.org/10.1007/978-3-319-00936-0},
   MRCLASS = {60G18 (60Fxx 60G22 60H07 62M10)},
  MRNUMBER = {3729426},
}
\end{filecontents}
\bibliographystyle{abbrv}
\bibliography{bibatef1.bib}
\end{document}